\documentclass{article}

\usepackage{amssymb,amsfonts,amsmath,latexsym,xcolor,amscd} 
\usepackage{amsthm}
\usepackage{empheq}
\usepackage[all,cmtip]{xy}
\usepackage{url}
\usepackage{graphicx} 
\usepackage{cases}
\usepackage{appendix}

\newtheorem{theorem}{Theorem}[section]
\newtheorem{lemma}[theorem]{Lemma}
\newtheorem{corollary}[theorem]{Corollary}
\newtheorem{proposition}[theorem]{Proposition}

\newtheorem{definition}[theorem]{Definition}

\newtheorem{remark}[theorem]{Remark}

\numberwithin{equation}{section}

\def\a{\alpha}
\def\e{\varepsilon}
\def\R{\Re\mathfrak{e} \,}
  
\def\RR{\mathbb{R}}
\def\CC{\mathbb{C}}

\def\de{\delta}
\def\la{\lambda}

\def\Ga{\Gamma}
\def\t{\theta}

\def\uon#1{\nu_{#1}}

\def\dom{\Omega}

\def\pOm{\partial\Omega}
\def\Om{\Omega}

\def\d{d}

\begin{document}

\title{Identification of generic polygonal domains by integral-geometric invariants}

\author{Jun O'Hara\footnote{Supported by JSPS KAKENHI Grant Number 23K03083.}}
\maketitle

\begin{abstract}
We study a reconstruction problem of planar domains from non-local integral-geometric invariants. 
We show that a generic polygonal domain, not necessarily convex, is uniquely determined, up to Euclidean isometry, by the interpoint distance distribution (IDD), which, for convex domains, is equivalent to the chord length distribution. 
Using a boundary representation of the Riesz energy function, we replace the IDD of the domain by an equivalent boundary IDD weighted by the scalar product of the outer unit normals, which we call $\nu$-weighted IDD of the boundary. 
It enables us to reduce the problem to a one-dimensional problem. 
By analyzing jumps and blow-up terms of the second and third derivatives of the $\nu$-weighted IDD of the boundary, we recover the side lengths, their cyclic incidence, and the exterior angles of the polygon. 
This can be considered as extending Waksman's classical generic reconstruction theorem for convex polygons to non-convex setting, as well as extending generic polygonal reconstruction from directional covariogram data to a one-dimensional invariant in which directional information has been integrated out. 
\end{abstract}

\medskip{\small {\it Keywords:} Integral geometry, polygon, interpoint distance distribution, covariogram, Riesz energy, chord length, reconstruction. } 

{\small 2020 {\it Mathematics Subject Classification:} 53C65, 52A22.}

\setcounter{tocdepth}{3}
%\tableofcontents

%!!!!!!!!!!!!!!!!!!!!!!!!!!!!!!!!!!!!!%%%%%%%%%%%%%%%%%%%%%%%%%%%%%%%%%%%%
\section{Introduction}
%!!!!!!!!!!!!!!!!!!!!!!!!!!!!!!!!!!!!!%%%%%%%%%%%%%%%%%%%%%%%%%%%%%%%%%%%%
We consider a distance-based reconstruction problem: to what extent can a domain $\Om$ be recovered from non-local invariants defined by pairwise distances? 
The spaces that we study here are planar polygons, and the invariants are the {\em interpoint distance distribution} (IDD) $\Psi_\Om(r)$, which maps each $r > 0$ to the product measure of the set of pairs $(x, y) \in \Om \times \Om$ whose distance $|x - y|$ is at most $r$, 
and, for convex bodies, the {\em chord length distribution} $\Phi_\Om(r)$, which assigns to $r>0$ the measure of the set of lines meeting $\Om$ in a chord of length at most $r$, where we use a measure on the space of affine lines in $\mathbb{R}^2$ that is isometry-invariant. 
Here, a {\em body} in Euclidean space refers to a compact subset equal to the closure of its interior. 
Using the {\em Riesz energy function}, which will be defined in Section \ref{sec_2}, we provide a weighted boundary expression of the interpoint distance distribution, which allows us to reduce problems on two-dimensional domains to those on one-dimensional closed polygonal lines. 

Our main result is 

\medskip\noindent
Theorem (see Theorem \ref{thm_generic}). 
A {\em generic} planar polygonal domain is determined up to Euclidean isometry by its interpoint distance distribution. 

\medskip\noindent
Here, the generic condition ensures that information about distances and angles does not interfere with each other. 
First, we define a {\em critical length} to be either an inter-vertex distance or an {\em interior height}, which is the length of the perpendicular dropped from a vertex to the interior of a side. 
Then the genericity assumptions require that the critical lengths are dissociated, that no adjacent exterior height accidentally coincides with a critical length, and that the values $|\tan\theta_i|$ are distinct, 
where $\theta_i$ is a signed exterior angle. 
We remark that, by the equivalences in Section \ref{sec_2}, the same statement as in the theorem above holds for the Riesz energy function, the $\nu$-weighted IDD for the boundary, and the $\nu$-weighted Riesz energy function for the boundary.

The precise definition will be given in Definition \ref{def_generic}. 
We remark that when the number of vertices is $3$ or $4$, reconstruction of convex polygons by the chord length distribution was proved by Gates (\cite{G}). 

We outline some main lines of related prior research. 
For general background on integral geometry, random sets, and stochastic geometry, see Santal\'o \cite{San2}, Matheron \cite{Matheron}, and Schneider and Weil \cite{SW}. 

The first one originated from the following problem posed by Blaschke: whether a convex body is uniquely determined by the moment integral of the chord length 
\begin{equation}\label{moment_integral}
I_q(\Om)=\int_{\mathcal{E}_1} {\sigma_\Om(\ell)}^q \,d\mu(\ell)=\int_0^\infty t^q\,d\Phi_\Om(t),
\end{equation}
where $\mathcal{E}_1$ is the set of lines in $\RR^2$, $\sigma_\Om(\ell)$ is the length of $\Om\cap\ell$, where we agree that $\sigma_\Om(\ell)=0$ when $\Om\cap\ell=\emptyset$, and $\mu$ is the invariant measure. 
We remark that the full sequence of moment integrals $I_k \>\> (k=0,1,2,\ldots)$ is equivalent to the chord length distribution $\Phi_\Om$ since the chord-length measure $d\Phi_\Om(t)$ is a finite measure with compact support and hence it is uniquely determined by its non-negative integer moments. 
While Mallows and Clark \cite{MC} constructed non-isometric convex polygons with the same chord length distribution, Waksman \cite{W} demonstrated that such counterexamples are exceptional by proving that a generic closed planar convex polygon is uniquely determined up to isometry by its chord length distribution. 
The main theorem of this paper extends this result to non-convex polygons. 
We remark that Waksman's genericity is formulated in terms of distinct inter-vertex distances, altitudes and their ratios; ours is formulated as a dissociation condition on the multiset of critical lengths together with the distinctness of $|\tan \t_i|$.

The second one is the {\em covariogram} 
\[g_\Om(x) = \operatorname{Vol}(\Om \cap (\Om+x)) \>\>(x\in\RR^2).\]
Matheron's conjecture asks whether a convex body $\Om$ is uniquely determined up to translation and reflection by its covariogram (\cite{M2}).  This conjecture was answered affirmatively for planar convex polygons by Nagel \cite{N} and for general planar convex bodies by Averkov and Bianchi \cite{AB}. 
Schmitt \cite{Sch} gave a constructive reconstruction procedure for a generic class of planar polygons, not necessarily convex, using directional discontinuities of second derivatives of the covariogram. 
However, uniqueness fails for general non-convex polygons, as shown by homometric examples (\cite{BBD, GGZ}). For a survey of covariogram see \cite{Bianchi}. 

Schmitt's genericity assumptions are formulated in terms of the edge configuration and translation vectors between vertices. 
In contrast, the interpoint distance distribution considered here is obtained by integrating the covariogram over centered disks and hence loses the directional variable: 
\[
\Psi_\Om(r)=\int_{|x|\le r}g_\Om(x)\, dx.
\]
In general, however, $\Psi_\Omega$ does not determine the full covariogram. 
Thus our reconstruction problem asks whether a generic polygon can still be recovered after this directional information has been integrated out.

Another related line of research concerns the chord Minkowski problem introduced by Lutwak, Xi, Yang and Zhang (\cite{LXYZ}). They defined chord measures as first variations of the moment integrals 
$I_q(\Om)$ given by \eqref{moment_integral}, 
and studied the associated Minkowski problems. 
We remark that, in the case of bodies in $\RR^n$, the chord Minkowski problem prescribes a directional measure on $S^{n-1}$, whereas the reconstruction problem considered here uses only the direction-averaged one-dimensional chord length distribution.

The key steps of the proof are as follows. First, we recall the definition of the Riesz energy function, and show the equivalence with the interpoint distance distribution. 
Next, application of Stokes' theorem yields a boundary integral representation of the Riesz energy for a compact body, expressed as a boundary Riesz energy weighted by $\langle \nu_x, \nu_y \rangle$, where $\nu$ denotes the unit outer normal vector \cite{OS2}. Then, this is equivalent to the $\nu$-weighted boundary interpoint distance distribution $\Psi_{\nu;\Ga}$, where $\Ga$ is the boundary of $\Om$. 
Thus the problem is reduced to one-dimensional problem. 

This $\nu$-weighted distribution is smooth near $r = 0$, but not on the half line $\{r\ge0\}$. 
The geometric data of $\Ga$ such as lengths and angles can be obtained from the locations where the smoothness of $\Psi_{\nu;\Ga}$ breaks down and the nature of these breakdowns. 

Under the genericity conditions, the regularity breaks down at the {\em critical lengths}. 
The first derivative of $\Psi_{\nu;\Ga}$ at $r=0$ uniquely determines the perimeter; together with the recovered critical lengths and the dissociation condition, this identifies the side lengths. 
By computing the jumps (the differences between the left and right limits) in the second and third derivatives at these critical lengths, we recover local geometric information, such as the angles between the corresponding sides. The genericity condition is introduced precisely to prevent these jump contributions from interfering with one another, ensuring that the geometric parameters can be recovered unambiguously. 
The polygon is then reconstructed by matching the extracted side lengths and angles, using the genericity conditions.

Finally, we remark that this study is a continuation of the author's recent research on spatial discrimination. On the one hand, maximally symmetric spaces such as balls can be distinguished by geometric invariants like Riesz energy functionals, thanks to the isoperimetric inequality (\cite{Oball}); on the other hand, sufficiently generic (asymmetric) finite metric spaces are also distinguishable by invariants such as the magnitude (\cite{Omag1}) and the $q$-spectrum (\cite{Omag3}). However, among spaces exhibiting intermediate symmetry, counterexamples exist that cannot be distinguished by such invariants.

\medskip
Acknowledgement: 
The author is deeply grateful to the anonymous referee deeply for helpful suggestions, especially for pointing out the connection to the covariogram and providing relevant references, as well as for suggesting a new approach regarding the equivalence of the invariants. 

After drafting the manuscript, the author used % Google Gemini 3.6 Flash, 
OpenAI ChatGPT 5.6 Sol, and Claude Fable 5.0/Opus 5 for language editing and internal consistency checks. 
The author reviewed and edited the content as necessary and take full responsibility for the final version of the manuscript.

\medskip
Notation: By $\Omega$ we mean a compact (polygonal) body in $\RR^2$, and $\Ga$ denotes its boundary. 
Throughout the paper, $\Omega$ is connected, $\Ga=\pOm$ is simple closed, and the interior angle at each vertex is neither $0$ nor $\pi$. 
By $[PQ]$ we mean a line segment with endpoints $P$ and $Q$.

%!!!!!!!!!!!!!!!!!!!!!!!!!!!!!!!!!!!!!%%%%%%%%%%%%%%%%%%%%%%%%%%%%%%%%%%%%
\section{Integral-geometric invariants and their equivalence}\label{sec_2} 
%!!!!!!!!!!!!!!!!!!!!!!!!!!!!!!!!!!!!!%%%%%%%%%%%%%%%%%%%%%%%%%%%%%%%%%%%%

%!!!!!!!!!!!!!!!!!!!!!!!!!!!!!!!!!!!!!%%%%%%%%%%%%%%%%%%%%%%%%%%%%%%%%%%%%
\subsection{Definitions}
%!!!!!!!!!!!!!!!!!!!!!!!!!!!!!!!!!!!!!%%%%%%%%%%%%%%%%%%%%%%%%%%%%%%%%%%%%

Let $X$ denote either a compact body $\Omega$ in $\RR^2$ or its boundary $\Ga=\pOm$.
 
Put
\[
B_X(z)=\int_{X\times X}|x-y|^z\,\d x\d y \qquad (z\in\CC), 
\]
where $\d x$ and $\d y$ denote the $(\dim X)$-dimensional Hausdorff measure of $X$, namely, area measure when $X=\Omega$ and arclength measure when $X=\Gamma$. 
It is well-defined if $\R z>-\dim X$, and the map is holomorphic on this region. 

For the smooth and polygonal sets considered below, $B_X$ admits a meromorphic continuation to $\mathbb C$, with at most simple poles at certain negative integers\footnote{How far $B_X$ can be analytically continued depends on the regularity of the boundary $\Gamma = \partial\Omega$ (\cite{Oball} Corollary 2.4).}. 
We denote it by the same symbol $B_X(z)$ and call it the {\em Riesz energy function} (\cite{OS2}), 
since for a body it is the double integral of the Riesz potential. 
It was introduced by Brylinski for knots under the name of the beta function \cite{B}, and studied for closed submanifolds by Fuller and Vemuri \cite{FV}; we use the former terminology throughout, including for the weighted boundary version below, since our main object is the two-dimensional domain $\Om$.

\smallskip
When $X$ is a compact body $\Omega$ in $\RR^2$, using the Gauss (Stokes) theorem twice we see that 
$B_\Om(z)$ can be expressed by the boundary integral as 
\begin{equation}\label{B_Omega_boundary_integral}
B_\Om(z)=
\frac{-1}{(z+2)^2}\iint_{\Ga\times \Ga}{|x-y|}^{z+2}\langle\uon{x},\uon{y}\rangle\,\d x \d y,
\end{equation}
where $\nu_x$ is an outer unit normal vector to $\Ga=\pOm$ at point $x$ (\cite{OS2}, Lemma 4.1). 
We put 
\[
B_{\nu;\,\Ga}(z)=\iint_{\Ga\times \Ga}{|x-y|}^{z}\langle\uon{x},\uon{y}\rangle\,\d x \d y
\]
and call it the {\em $\nu$-weighted Riesz energy function} of $\Ga$. 
We remark that the equation \eqref{B_Omega_boundary_integral} also holds when the boundary is a polygon since the Gauss (Stokes) theorem does not require that the boundary be smooth.

\medskip
For $r\ge0$ and $Y,W\subset X$ put 
\[
\begin{array}{rcl}
\displaystyle \Delta_r&=&\displaystyle \left\{(x,y)\in \RR^2\times \RR^2\,:\,|x-y|\le r \right\}, \\[2mm] %\qquad (r>0) 
\displaystyle \Psi_{Y,x}(r)&=&\displaystyle \mbox{Vol}(Y\cap B_x(r)) \qquad (x\in X), \\[2mm]
\displaystyle \Psi_{Y,W}(r)&=&\displaystyle \mbox{Vol}((Y\times W)\cap\Delta_r), \\[2mm]
\displaystyle \Psi_Y(r)=\Psi_{Y,Y}(r)&=&\displaystyle \mbox{Vol}((Y\times Y)\cap\Delta_r)
=\int_Y\Psi_{Y,x}(r)\,\d x,  
\end{array}
\]
and call $\Psi_Y(r)$ the {\em interpoint distance distribution} of $Y$. It is a continuous function of $r$. 

Corresponding to the $\nu$-weighted version of the Riesz energy function, we define a $\nu$-weighted version of the interpoint distance distribution of $\Ga=\pOm$. 
Put 
\begin{equation}
\begin{array}{rcl}
\Psi_{\nu;x}(r)&=&\displaystyle \int_{\Ga\cap B_x(r)}\langle\uon{x},\uon{y}\rangle \,\d y \qquad (x\in\Ga), \\[4mm] 
\Psi_{\nu;\,\Ga}(r)&=&\displaystyle \iint_{(\Ga\times \Ga)\cap\Delta_r}\langle\uon{x},\uon{y}\rangle \,\d x\d y
=\int_{\Ga}\Psi_{\nu;x}(r)\,\d x, 
\end{array}
\end{equation}
and call $\Psi_{\nu;\,\Ga}$ the {\em $\nu$-weighted interpoint distance distribution}. 

\medskip
Assume $\Omega$ is convex. 
Let $\mathcal{E}_1$ be the set of lines in $\RR^2$, and $\mu$ be the measure on $\mathcal{E}_1$ defined by $d\mu=\d p\wedge \d\theta$, where $p,\theta$ are the polar coordinates of the foot of the perpendicular to the line from the origin. Then $\mu$ is invariant under motions of $\RR^2$ (\cite{San2} page 27). 
Put 
\[
\begin{array}{rcl}
\mathcal{E}_1(\Omega)&=&\displaystyle \{\ell\in\mathcal{E}_1\,:\,\ell\cap \mathrm{int}\, \Omega\ne\emptyset\}, \\[2mm]
\Phi_\Om(r)&=&\displaystyle \mu\{\ell\in\mathcal{E}_1(\Omega)\,:\,\sigma_\Om(\ell)=L(\Om\cap\ell)\le r\}, 
\end{array}
\]
where $L$ means the length, and call $\Phi_\Om$ the {\em chord length distribution} of a convex body $\Om$. 

%
%!!!!!!!!!!!!!!!!!!!!!!!!!!!!!!!!!!!!!%%%%%%%%%%%%%%%%%%%%%%%%%%%%%%%%%%%%
\subsection{Equivalence}\label{subsec_equivalence}
%!!!!!!!!!!!!!!!!!!!!!!!!!!!!!!!!!!!!!%%%%%%%%%%%%%%%%%%%%%%%%%%%%%%%%%%%%
By slightly refining earlier results in the literature, we establish the equivalence of the functions introduced above.

\medskip
The equivalence of the Riesz energy function of $\Om$, $B_\Om(z)$, and its $\nu$-weighted version of the boundary, $B_{\nu;\,\Ga}(z)$, is given by \eqref{B_Omega_boundary_integral}. 

We have 
\begin{equation}\label{Mf}
B_X(s)
=\int_{[0,\infty)}r^s\,d\Psi_X(r),
%=\mathcal M_{\mathrm{St}}[\Psi_X](s+1),
\qquad (\R s>-\dim X)
\end{equation}
(cf. \cite{OS2} Proposition 3.3 and Subsection 4.2, and \cite{Oball} Prop 2.1), 
which implies the equivalence of $B_\Om(z)$ and the interpoint distance distribution $\Psi_\Om(r)$. %
The equivalence also follows from the Hausdorff moment problem. 
Since the distance measure $d\Psi_X$ is supported on the compact interval $[0,\operatorname{diam}X]$, its moments $B_X(k)$, $k=0,1,2,\ldots$, determine it uniquely. Hence $B_X$ and $\Psi_X$ determine each other.

The same argument applies to the finite signed measure $\langle\nu_x,\nu_y\rangle \, d\Psi_\Ga$ 
to produce the equivalence between $B_{\nu;\,\Ga}$ and $\Psi_{\nu;\,\Ga}$. 
In fact, although $\langle\nu_x,\nu_y\rangle \, d\Psi_\Ga$ is not necessarily a positive measure, since polynomials are dense in $C([0, \textrm{diam} X])$ by the Weierstrass approximation theorem, a finite (possibly signed) Borel measure on this interval is uniquely determined by its non-negative integer moments.

\smallskip
For convex bodies, the chord length distribution is equivalent to the interpoint distance distribution (\cite{M} p.25). 
It is a consequence of the formula\footnote{This can be considered as a special case of the Blaschke-Petkantschin formula (\cite{B2,P}). } 
\begin{equation}\label{B-P_formula}
\d x\wedge \d y=|t_2-t_1|\,\d\mu \wedge \d t_1\wedge\d t_2 \quad (x,y\in\Omega), %\nonumber
\end{equation}
where $t_1$ and $t_2$ are the signed distance of $x$ and $y$ from the foot of the perpendicular to the line through $x$ and $y$ from the origin (\cite{San2} (4.2) p.46). 
From \eqref{B-P_formula} we get 
\begin{equation}\label{Riesz-chord}
B_\Om(s)
%=\iint_{\Omega\times\Omega}|x-y|^s\,\d x\d y
=\frac2{(s+2)(s+3)}\int_{\mathcal{E}_1}{(\sigma_\Om(\ell))}^{s+3}\,\d\mu 
=\frac2{(s+2)(s+3)}I_{s+3}(\Om)
%\quad (\sigma=L(\Om\cap\ell))  %\nonumber
\end{equation}
for $s>-2$ (\cite{San2} p.47, (4.4))\footnote{The formula \eqref{Riesz-chord} is generalized to arbitrary dimension (\cite{San2} p.238, (14.25)), which is generalized to non-convex case (\cite{CB} (3.5)).}, which implies that the Riesz energy function can be expressed as a Lebesgue-Stieltjes integral as 
\[
B_\Omega(s)=\frac2{(s+2)(s+3)}\int_0^\infty \sigma^{s+3}\,\d\Phi_\Omega(\sigma) 
\]
(\cite{San2} p.46 (4.2)). 

\medskip
The preceding discussion yields the following.

\begin{theorem}\label{thm_equivalence}{\rm (\cite{M, Oball, OS2})} 
For a planar compact body $\Om$ with boundary $\Ga$, $B_\Om$, $\Psi_\Om$, $B_{\nu;\,\Ga}$, $\Psi_{\nu;\,\Ga}$ are equivalent. 
Furthermore, if $\Omega$ is convex, $\Phi_\Om$ is equivalent to the above four. 
\end{theorem}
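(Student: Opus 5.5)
The plan is to build a chain of mutual determinations
\[
\Psi_\Om \leftrightarrow B_\Om \leftrightarrow B_{\nu;\,\Ga} \leftrightarrow \Psi_{\nu;\,\Ga},
\]
and then attach $\Phi_\Om$ to $B_\Om$ in the convex case.

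\emph{Step 1: $\Psi_X \leftrightarrow B_X$.} For $X=\Om$, the pushforward of $dx\,dy$ under $(x,y)\mapsto|x-y|$ is the Lebesgue--Stieltjes measure $d\Psi_\Om$, so \eqref{Mf} holds for $\R s>-2$. Thus $\Psi_\Om$ determines $B_\Om$ on this half-plane, and hence everywhere by uniqueness of meromorphic continuation. Conversely, $B_\Om$ gives the moments $B_\Om(k)$, $k\in\mathbb{Z}_{\ge0}$, of a finite positive measure supported on $[0,\operatorname{diam}\Om]$. By Weierstrass density, these moments determine the measure. Since $\Psi_\Om(0)=0$ (the diagonal is null), the measure recovers $\Psi_\Om(r)=\int_{[0,r]}d\Psi_\Om$.

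\emph{Step 2: $\Psi_{\nu;\Ga} \leftrightarrow B_{\nu;\Ga}$.} The same argument applies to the signed measure $m_\nu$, defined as the pushforward of $\langle\nu_x,\nu_y\rangle\,dx\,dy$ on $\Ga\times\Ga$. Here $m_\nu([0,r])=\Psi_{\nu;\,\Ga}(r)$ and $\int r^s\,dm_\nu=B_{\nu;\,\Ga}(s)$ for $\R s>-1$. A finite signed Borel measure on a compact interval is determined by its integer moments, because they determine $\int f\,dm_\nu$ for every continuous $f$, and the Riesz representation theorem then gives uniqueness.

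One point needs checking: $m_\nu$ may have an atom at $0$. Although the diagonal of $\Ga\times\Ga$ has measure zero, so no atom arises, I will state this explicitly so that $\Psi_{\nu;\Ga}(0)=0$ and the distribution function is recovered exactly. Right-continuity of $r\mapsto m_\nu([0,r])$ then gives the function from the measure.

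\emph{Step 3: $B_\Om \leftrightarrow B_{\nu;\Ga}$.} Formula \eqref{B_Omega_boundary_integral} reads $B_\Om(z)=-(z+2)^{-2}B_{\nu;\,\Ga}(z+2)$. This holds on the common domain of convergence and extends as an identity of meromorphic functions, so each function determines the other.

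I expect this step to be the main obstacle. I would verify the Stokes argument for polygonal (Lipschitz) boundaries, using the divergence theorem for Lipschitz domains, and check that $\Ga$ is only piecewise smooth with normals defined almost everywhere. Because $\langle\nu_x,\nu_y\rangle$ is bounded, the weighted integrals converge absolutely for $\R z>-1$, and the shift by $2$ keeps us inside the overlapping region. It also matters that determination is sought only on a half-plane, where identity-theorem uniqueness suffices.

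\emph{Step 4: the convex case.} For convex $\Om$, \eqref{Riesz-chord} gives
\[
B_\Om(s)=\frac{2}{(s+2)(s+3)}\int_0^\infty\sigma^{s+3}\,d\Phi_\Om(\sigma).
\]
The measure $d\Phi_\Om$ is finite and supported on $[0,\operatorname{diam}\Om]$; its total mass is the perimeter, by Cauchy--Crofton. So $\Phi_\Om$ determines $B_\Om$. Conversely, evaluating at $s=k-3$ for $k\ge 3$ recovers the moments $\int\sigma^k\,d\Phi_\Om$. The measure $\sigma^3\,d\Phi_\Om$ is then determined by its moments, and hence so is $d\Phi_\Om$ on $(0,\infty)$.

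The atom of $d\Phi_\Om$ at $0$ is empty, because lines meeting $\mathrm{int}\,\Om$ have positive chord length. Therefore $\Phi_\Om$ is recovered exactly, which closes the equivalence with the other four.
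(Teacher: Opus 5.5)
Your proposal is correct and follows the paper's own argument. The boundary formula \eqref{B_Omega_boundary_integral} links $B_\Om$ and $B_{\nu;\,\Ga}$. The Stieltjes representation \eqref{Mf}, together with uniqueness of a compactly supported (possibly signed) measure from its integer moments, links each Riesz energy function with its distribution. Formula \eqref{Riesz-chord} brings in $\Phi_\Om$ in the convex case. Your extra care only makes explicit what the paper leaves implicit: ruling out an atom at $0$, and passing to $\sigma^3\,d\Phi_\Om$ so that only the moments with $k\ge 3$ are needed.
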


We remark that the above also holds for higher dimensions. 

%!!!!!!!!!!!!!!!!!!!!!!!!!!!!!!!!!!!!!%%%%%%%%%%%%%%%%%%%%%%%%%%%%%%%%%%%%
\section{Analysis of the $\nu$-weighted IDD for polygons}
%!!!!!!!!!!!!!!!!!!!!!!!!!!!!!!!!!!!!!%%%%%%%%%%%%%%%%%%%%%%%%%%%%%%%%%%%%
The purpose of this section is to establish Propositions \ref{from_Psi_to_polygon} and \ref{Psi<->angles}, which are the only results from the local analysis needed in Section \ref{sect_4}. 
In Subsections \ref{subs3-1}--\ref{subsec_parallel} we treat the possible local configurations of pairs of sides, and show how the geometric data about lengths and angles are obtained from the $\nu$-weighted interpoint distance distribution of $\Ga$. 

In what follows we assume that $\Ga$ is a planar polygon. 
Recall that we assume that $\Ga$ is connected and simple closed, and that the interior angle at each vertex is neither $0$ nor $\pi$. 

Let $P_1,\dots,P_n$ be the vertices in counterclockwise order and $E_i=\overline{P_{i-1}P_i}$ be the sides. 
For $Y,W\subset\Ga$ define 
\[
\begin{array}{rcl}
\Psi_{Y,W}(r)&=&\displaystyle \mbox{Vol}((Y\times W)\cap\Delta_r),\\[2mm]
\Psi_{\nu;\,Y,W}(r)&=&\displaystyle \int_{(Y\times W)\cap\Delta_r}\langle\uon{x},\uon{y}\rangle \,\d x\d y, \\[4mm]
\Psi_{(\nu;)\,Y}(r)&=&\displaystyle \Psi_{(\nu;)\,Y,Y}(r),
\end{array}
\]
where $\nu$ is the unit outer normal to $\Ga$. 
We study the contribution of a pair of sides $E_i$ and $E_j$ to the interpoint distance distribution.

We show that the values of $r$ where the second derivative of $\Psi_{E_i,E_j}$ is discontinuous give information on lengths of edges (sides and diagonals) and perpendiculars from vertices to sides, and the jumps and the blowing-up terms of the second and third derivatives give information on angles. 
This fact is a key tool of this article. 

We use the following notation. 
Let $a_i$ be the length of $E_i$, and $\theta_i$ ($-\pi<\t_i<\pi, \> \t_i\ne0$) be the signed exterior angles, i.e. the signed angle of $\overrightarrow{P_iP_{i+1}}$ from $\overrightarrow{P_{i-1}P_{i}}$ (cf. Figure \ref{generic_endpoints}). 
Let $\ell_i$ be the line containing $E_i$ and $\mbox{\rm pr}_{i}$ be the orthogonal projection to $\ell_i$. 
We will denote a jump in the derivative of $\Psi$ by the symbol $J$; 
\[
J^{(p)}_{(\nu;)\, Y,W}(l)=\displaystyle \lim_{r\searrow l}\Psi_{(\nu;)\,Y,W}^{(p)}(r)-\lim_{r\nearrow l}\Psi_{(\nu;)\,Y,W}^{(p)}(r) \qquad (Y,W\subset \Ga). 
\]
In particular, when $Y$ and $W$ are sides $E_i$ and $E_j$ we abbreviate 
\begin{equation}\label{def_J}\begin{array}{rcl}
J^{(p)}_{(\nu;)\, i,j}(l)&=&\displaystyle \lim_{r\searrow l}\Psi_{(\nu;)\,E_{i}, E_j}^{(p)}(r)-\lim_{r\nearrow l}\Psi_{(\nu;)\,E_{i}, E_j}^{(p)}(r). 
\end{array}
\end{equation}
For a single side $E_i$, direct computation shows 
\begin{equation}\label{Psi_single}
\Psi_{E_i}(r)%=\Psi_{E_i,E_i}(r)
=\left\{\begin{array}{ll}
2a_ir-r^2 &\quad (r<a_i),\\[1mm]
{a_i}^2 &\quad (r\ge a_i), 
\end{array}
\right.
\end{equation}
which implies 
\[
J^{(2)}_{i,i}(a_i)=J^{(2)}_{\nu;\,i,i}(a_i)=2.  
\]

\begin{remark}\rm 
Related edge-pair formulas for second-order distributional derivatives of the covariogram are known for polygonal sets; see Theorem 3.6 of \cite{Sch} and Lemma 3.9 of \cite{BBD}. Our problem differs in that the directional variable of the covariogram has been integrated out. We therefore analyze the singularities that remain in the one-dimensional radial distribution $\Psi_{\nu;\Gamma}(r)$. 
\end{remark}

%!!!!!!!!!!!!!!!!!!!!!!!!!!!!!!!!!!!!!%%%%%%%%%%%%%%%%%%%%%%%%%%%%%%%%%%%%
\subsection{Contribution of adjacent edges at $r=0$}\label{subs3-1}
%!!!!!!!!!!!!!!!!!!!!!!!!!!!!!!!!!!!!!%%%%%%%%%%%%%%%%%%%%%%%%%%%%%%%%%%%%

\begin{definition}\label{def_omega} \rm 
Define $\omega(\t)$ and $\omega_\nu(\t)$ $(-\pi<\t<\pi)$ by
\[
\omega(\t)=\frac{\t}{\sin\t} \quad (\t\ne0), \quad
\omega(0)=1, \qquad
\omega_\nu(\t)={\t}{\cot\t} \quad (\t\ne0), \quad
\omega_\nu(0)=1.
\]

\end{definition}

Then $\omega$ and $\omega_\nu$ are differentiable and even, $\omega_{(\nu)}(\t)=\omega_{(\nu)}(-\t)$.

\begin{lemma}\label{lemma_half_IDD}
Put $a=\min\{a_i,a_{i+1}\}$. Then 
\begin{equation}\label{IDD_adjacent}
\Psi_{E_i,E_{i+1}}(r)=\frac12 \omega(\t_i)\,r^2 \quad 
\left\{\begin{array}{ll}
r\le a  &\quad \displaystyle \left(0\le|\t_i|<\frac\pi2\right), \\[3mm]
r\le a\sin|\t_i| &\quad \displaystyle \left(\frac\pi2\le|\t_i|<\pi\right).
\end{array}
\right.
\end{equation}
\end{lemma}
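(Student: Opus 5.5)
Place the common vertex $P_i$ at the origin and parametrize the two sides by arclength from $P_i$: a point of $E_i$ is $x=-s\,u$ with $0\le s\le a_i$, and a point of $E_{i+1}$ is $y=t\,v$ with $0\le t\le a_{i+1}$. Here $u,v$ are the unit direction vectors of $\overrightarrow{P_{i-1}P_i}$ and $\overrightarrow{P_iP_{i+1}}$, so $\langle u,v\rangle=\cos\t_i$. Then
\[
|x-y|^2=s^2+t^2+2st\cos\t_i ,
\]
and $\Psi_{E_i,E_{i+1}}(r)$ is the Lebesgue area of the set $\{(s,t)\in[0,a_i]\times[0,a_{i+1}]: s^2+2st\cos\t_i+t^2\le r^2\}$. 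The plan is to show that for $r$ in the stated range this set is the \emph{full} sector cut out by the quadratic form in the open quadrant $s,t\ge0$, unaffected by the constraints $s\le a_i$, $t\le a_{i+1}$. Its area is then computed in polar coordinates.

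For the area, write $s=\rho\cos\phi$, $t=\rho\sin\phi$ with $\phi\in[0,\pi/2]$. The quadratic form becomes $\rho^2(1+\sin2\phi\cos\t_i)$, and this is positive since $|\t_i|<\pi$. The area of $\{Q\le r^2\}$ in the quadrant is therefore
\[
\frac{r^2}{2}\int_0^{\pi/2}\frac{d\phi}{1+\cos\t_i\sin2\phi}
=\frac{r^2}{4}\int_0^{\pi}\frac{d\psi}{1+\cos\t_i\sin\psi}.
\]
The standard integral $\int_0^\pi d\psi/(1+c\sin\psi)=2\arccos(c)/\sqrt{1-c^2}$ with $c=\cos\t_i$ gives $2|\t_i|/\sin|\t_i|$, so the area is $\frac12\omega(\t_i)r^2$. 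One can verify this via the substitution $w=\tan(\psi/2)$, or by the linear change of variables that turns the form into the Euclidean norm: the sector is the preimage of a circular sector of angle $|\t_i|$ (the angle between $-u$ and $v$ being $\pi-|\t_i|$, and the relevant region is the cone spanned by $-u$ and $v$ under the map $(s,t)\mapsto -su+tv$, whose Jacobian is $|\sin\t_i|$). The second description is cleaner. The image of the quadrant under $(s,t)\mapsto y-x=su+tv$ is the cone between $u$ and $v$, of opening angle $|\t_i|$, and $d s\,dt = dA/|\sin\t_i|$. So the area is $\frac12|\t_i|r^2/\sin|\t_i|$, valid also through the limit $\t_i\to0$.

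The main obstacle is checking that the length constraints are inactive, i.e.\ that every $(s,t)\ge0$ with $|x-y|\le r$ automatically satisfies $s\le a_i$ and $t\le a_{i+1}$.
\begin{itemize}
\item If $|\t_i|<\pi/2$, then $\cos\t_i>0$, so $|x-y|^2\ge s^2+t^2$. Hence $|x-y|\le r\le a$ forces $s,t\le a$, which is what we need.
\item If $\pi/2\le|\t_i|<\pi$, minimize $s^2+2st\cos\t_i+t^2$ over $t\ge0$ for fixed $s$. The minimum is $s^2\sin^2\t_i$, attained at $t=-s\cos\t_i\ge0$. Thus $|x-y|\le r$ implies $s\sin|\t_i|\le r\le a\sin|\t_i|$, i.e.\ $s\le a\le a_i$. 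The same holds symmetrically for $t$.
\end{itemize}
Geometrically, this is the distance from the points of one side to the line of the other. I expect this case split to be the only delicate point, and it explains exactly the two regimes in the statement.
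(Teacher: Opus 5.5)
Your proof is correct, and it follows a different route from the paper's.

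\textbf{The paper's route.} After reducing to $\t_i\ge0$, the paper computes $\Psi_{E_i,E_{i+1}}(r)$ by slicing. For each position $t$ on one side it writes down the length of the set of points on the other side within distance $r$. This length is $-t\cos\t+\sqrt{r^2-t^2\sin^2\t}$ for $t\le r$. In the obtuse case there is an additional range $r\le t\le r/\sin\t$, where the length is $2\sqrt{r^2-t^2\sin^2\t}$. The paper then integrates using the antiderivative of $\sqrt{1-s^2}$. The two regimes of $r$ enter only implicitly, through the requirement that these integration limits stay inside the sides.

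\textbf{Your route.} You first prove explicitly that the constraints $s\le a_i$, $t\le a_{i+1}$ are inactive in the stated range. In the acute case this uses $Q\ge s^2+t^2$; in the obtuse case it uses $\min_{t\ge0}Q=s^2\sin^2\t_i$. You then identify the region as the preimage of a circular sector of angle $|\t_i|$ under the linear map $(s,t)\mapsto su+tv$, whose Jacobian is $|\sin\t_i|$.

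\textbf{What each route buys.} Your route avoids any case split in the area computation and explains $\frac12\omega(\t_i)r^2$ conceptually, as sector area divided by Jacobian. It also makes the thresholds $a$ and $a\sin|\t_i|$ transparent: they are distances from points of one side to the other side or its line. The paper's slicing is the technique it reuses later, e.g.\ in Lemma~\ref{lemma_tangency_vertex}, where the constraints are active and no clean sector picture is available.

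\textbf{One fix.} Delete your first parenthetical about the map $(s,t)\mapsto -su+tv$ and the cone between $-u$ and $v$. That map sends the form to $s^2+t^2-2st\cos\t_i$, not to $Q$. That cone also has opening angle $\pi-|\t_i|$, which would give the wrong area. Your subsequent ``cleaner'' description via $y-x=su+tv$ is the correct one, and your polar-coordinate evaluation with $\int_0^\pi d\psi/(1+c\sin\psi)=2\arccos c/\sqrt{1-c^2}$ independently confirms the value.
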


\begin{proof}
Put $\t=\t_i$. Since $\omega(\t)$ is even, we may assume $\t\ge0$. 

When $0\le\t<\pi/2$ 
\[
\begin{array}{rcl}
\Psi_{E_i,E_{i+1}}(r)&=&\displaystyle \int_0^r\left(-t\cos\t+\sqrt{r^2-t^2\sin^2\t}+t\right)\d t \\[4mm]
&=&\displaystyle -\frac{\cos\t}2 r^2+\frac{r^2}{\sin\t}\left[\frac{s \sqrt{1-s^2}}2+\frac12\arcsin s\right]_0^{\sin\t} \\[4mm]
&=&\displaystyle \frac{r^2}2\left(-\cos\t+\frac{\sin\t\cos\t+\t}{\sin\t}\right)  \\[4mm]
&=& \displaystyle \frac12 \,\omega(\t)\,r^2.
\end{array}
\]

When $\pi/2\le\t<\pi$ and $0\le r\le a\sin\t$ 
\[
\begin{array}{rcl}
\Psi_{E_i,E_{i+1}}(r)&=&\displaystyle \int_0^r\left(-t\cos\t+\sqrt{r^2-t^2\sin^2\t}+t\right)\d t +\int_r^{\frac{r}{\sin\t}}%\int_r^{{r}/{\sin\t}}
2\sqrt{r^2-t^2\sin^2\t}\,\,\d t \\[4mm]
&=&\displaystyle -\frac{\cos\t}2 r^2+\frac{r^2}{\sin\t}\left[\frac{s \sqrt{1-s^2}}2+\frac12\arcsin s\right]_0^{\sin\t}
+ \frac{r^2}{\sin\t}\left[s \sqrt{1-s^2}+\arcsin s\right]_{\sin\t}^1 \\[4mm]
&=&\displaystyle -\frac{\cos\t}2 r^2+\frac{r^2}{\sin\t}\left[\frac12\left(-\sin\t\cos\t+\pi-\t\right)
+\left(\frac{\pi}2+\sin\t\cos\t-(\pi-\t)\right)\right] \\[4mm]
&=& \displaystyle \frac12 \omega(\t)\,r^2.
\end{array}
\]
\end{proof}

\begin{corollary}\label{Cor_Psi_r_small}
For sufficiently small $r\ge0$, we have
\begin{equation}\label{Psi_nu_r_small}
\Psi_{\nu;\Ga}(r)=2L(\Ga)r-nr^2+\sum_{i=1}^n \omega_\nu(\t_i) \,r^2 \quad (\Ga=\pOm).
\end{equation}
\end{corollary}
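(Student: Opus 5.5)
The plan is to split $\Psi_{\nu;\Ga}$ into contributions of ordered pairs of sides,
\[
\Psi_{\nu;\Ga}(r)=\sum_{i,j}\Psi_{\nu;E_i,E_j}(r),
\]
and then show that for small $r$ only the diagonal pairs $(i,i)$ and the adjacent pairs $(i,i\pm1)$ survive. The double counting on shared vertices is harmless because it has measure zero.

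\textbf{Choice of $r$.} Since $\Ga$ is simple closed, two non-adjacent sides $E_i,E_j$ are disjoint compact sets, so $\delta=\min\operatorname{dist}(E_i,E_j)>0$, the minimum taken over non-adjacent pairs. For $r<\delta$ these pairs contribute nothing. In addition we take
\[
r\le \min_i \min\{a_i,a_{i+1}\}\sin|\t_i| .
\]
This is positive because $\t_i\neq0$ and $|\t_i|<\pi$. This choice makes Lemma \ref{lemma_half_IDD} applicable at every vertex, in both angle regimes, and also gives $r<a_i$ for all $i$.

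\textbf{Diagonal terms.} On $E_i\times E_i$ we have $\nu_x=\nu_y$, so the weight is $1$ and $\Psi_{\nu;E_i,E_i}=\Psi_{E_i}$. By \eqref{Psi_single}, with $r<a_i$, this equals $2a_ir-r^2$. Summing over $i$ gives
\[
2L(\Ga)r-nr^2 .
\]

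\textbf{Adjacent terms.} On $E_i\times E_{i+1}$ both normals are constant. The outer normal rotates together with the tangent direction, i.e.\ by the signed exterior angle $\t_i$, so
\[
\langle\nu_x,\nu_y\rangle=\cos\t_i .
\]
Hence $\Psi_{\nu;E_i,E_{i+1}}=\cos\t_i\,\Psi_{E_i,E_{i+1}}=\tfrac12\cos\t_i\,\omega(\t_i)\,r^2$ by Lemma \ref{lemma_half_IDD}. The same value arises for the ordered pair $(i+1,i)$ by symmetry. Together the two contribute
\[
\cos\t_i\cdot\frac{\t_i}{\sin\t_i}\,r^2=\omega_\nu(\t_i)\,r^2 .
\]

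\textbf{Bookkeeping.} Each vertex $P_i$ corresponds to exactly one unordered adjacent pair $\{E_i,E_{i+1}\}$, with indices taken cyclically. Since $n\ge3$, these $n$ pairs are distinct and distinct from the diagonal pairs. This also holds for $n=3$, where every pair of distinct sides is adjacent and $\delta$ is vacuous. Summing the diagonal and adjacent contributions yields \eqref{Psi_nu_r_small}.

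\textbf{Main obstacle.} The only delicate point is the sign convention. One must check that the angle between the outer normals on consecutive sides is exactly $\t_i$, rather than $\pi-\t_i$, for both convex and reflex vertices. This follows because the counterclockwise orientation makes $\nu$ the tangent rotated by $-\pi/2$, a rigid rotation that preserves the angle between consecutive directions.
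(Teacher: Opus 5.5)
Your proposal is correct and follows the paper's proof. Both split the double integral over side pairs, use $\langle\nu_i,\nu_{i+1}\rangle=\cos\t_i$, and apply \eqref{Psi_single} and Lemma~\ref{lemma_half_IDD}; you additionally make explicit the smallness threshold on $r$ and the orientation convention for the outer normals, which the paper leaves implicit.
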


\begin{proof}
For sufficiently small $r\ge0$, since $\langle\nu_i,\nu_{i+1}\rangle=\cos\t_i$ we have 
\[\begin{array}{rcl}
\Psi_{\nu;\Ga}(r)&=&\displaystyle \sum_{i=1}^n \Psi_{E_i,E_{i}}(r)+2\sum_{i=1}^n \cos\t_i\Psi_{E_i,E_{i+1}}(r).
\end{array}
\]
The assertion follows from \eqref{Psi_single} and \eqref{IDD_adjacent}. 
\end{proof}

\begin{corollary}\label{perimeter}
The length of $\Ga$ can be obtained from $\Psi_{\nu;\,\Ga}$ as 
\[
L(\Ga)=\frac12 \Psi_{\nu;\,\Ga}'(0^+). 
\]
\end{corollary}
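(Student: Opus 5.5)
This follows directly from Corollary \ref{Cor_Psi_r_small}. That corollary gives an explicit polynomial expression for $\Psi_{\nu;\Ga}(r)$ on an interval $[0,r_0)$ with $r_0>0$. On that interval $\Psi_{\nu;\Ga}$ coincides with the quadratic
\[
2L(\Ga)\,r+\Bigl(\sum_{i=1}^n\omega_\nu(\t_i)-n\Bigr)r^2 .
\]
Its right derivative at $0$ is therefore the linear coefficient, $2L(\Ga)$, and dividing by $2$ gives the claimed formula.

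The only point that needs justification is that the expression of Corollary \ref{Cor_Psi_r_small} really holds on a nondegenerate interval $[0,r_0)$, so that the one-sided derivative at $0$ is well defined and equal to that of the quadratic. I would choose $r_0$ smaller than the following quantities:
\begin{itemize}
\item every side length $a_i$, so that \eqref{Psi_single} applies in its first branch;
\item every $a\sin|\t_i|$, so that Lemma \ref{lemma_half_IDD} applies to each adjacent pair;
\item the minimal distance between non-adjacent sides.
\end{itemize}
The third quantity is positive because $\Ga$ is simple closed and compact. It ensures that for $r<r_0$ only the diagonal pairs $(E_i,E_i)$ and the adjacent pairs $(E_i,E_{i\pm1})$ contribute to $\Psi_{\nu;\Ga}(r)$. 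This is exactly the decomposition used in the proof of Corollary \ref{Cor_Psi_r_small}.

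There is no real obstacle. The one subtlety is that the sign $\langle\nu_x,\nu_y\rangle=1$ on a single side makes the diagonal contribution exactly $\Psi_{E_i}(r)=2a_ir-r^2$. Summing the linear parts over $i$ gives $2\sum a_i=2L(\Ga)$, while the adjacent-pair terms are $O(r^2)$ and do not affect the first derivative.
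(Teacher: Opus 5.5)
Your proposal is correct and is the paper's argument: the paper's proof is just ``immediate from \eqref{Psi_nu_r_small}'', i.e.\ it reads off the linear coefficient $2L(\Ga)$ of the small-$r$ quadratic exactly as you do. Your explicit choice of $r_0$ (below every $a_i$, every $\min\{a_i,a_{i+1}\}\sin|\t_i|$, and the minimal distance between non-adjacent sides) makes precise the ``sufficiently small $r$'' that the paper leaves implicit.
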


\begin{proof}
Immediate from \eqref{Psi_nu_r_small}
\end{proof}

%!!!!!!!!!!!!!!!!!!!!!!!!!!!!!!!!!!!!!%%%%%%%%%%%%%%%%%%%%%%%%%%%%%%%%%%%%
\subsection{Contribution of generic sides}\label{subs3-2} 
%!!!!!!!!!!!!!!!!!!!!!!!!!!!!!!!!!!!!!%%%%%%%%%%%%%%%%%%%%%%%%%%%%%%%%%%%%
%
We study the discontinuity of the second derivative of $\Psi_{E_i,E_{j}}$ at specific values. 

Let $P$ be an endpoint of $E_i$ and $P'$ be an endpoint of $E_j$. 
Let $b$ be the length of the edge $[{PP'}]$, 
and let $\a$ and $\beta$ be the signed exterior angles at $P$ and $P'$ (Figure \ref{generic_endpoints}). 

Let $\ell$ be the line through $P$ and $P'$. 
Let $X$ and $Y$ be points on $E_i$ and $E_j$ respectively. 
If $XP=s$ and $YP'=t$ then the distance between $X$ and $Y$, which we denote by $f(s,t)$, is given by 
\begin{equation}\label{distance_sq}
f(s,t)=\sqrt{b^2+s^2+t^2+2bs\cos\a+2bt\cos\beta+2st\cos(\a+\beta)} \,.
\end{equation}

\begin{figure}[htbp]
  \centering
  \includegraphics[width=.4\linewidth]{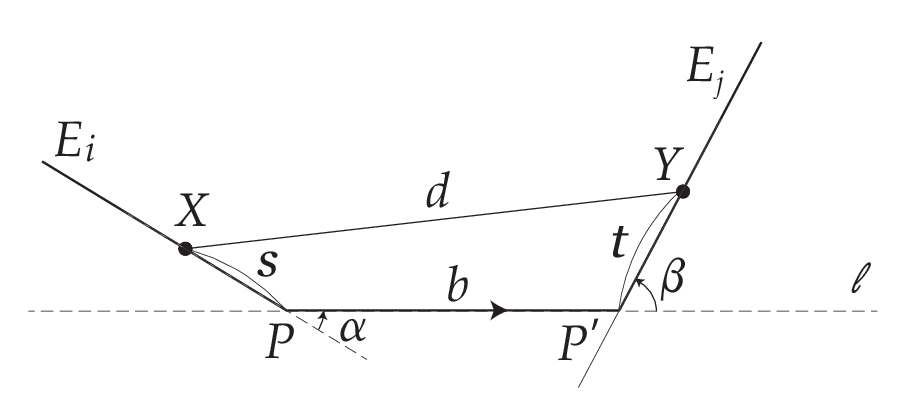}
  \caption{}
  \label{generic_endpoints}
\end{figure}

Let $S_r$ be the sublevel set of $f$, $S_r=\{(s,t)\,:\,f(s,t)\le r\}$ ($r>0$). 
Then $\Psi_{E_i,E_{j}}(r)$ is equal to the area of $S_r\cap([0,a_i]\times[0,a_j])$, where $a_i$ and $a_j$ are the lengths of $E_i$ and $E_j$. 
The level set $L_r=\{(s,t)\,:\,f(s,t)=r\}$ is (a part of) an ellipse in general, although it may degenerate to a point or a line (segment), where the latter case can occur only when $E_i$ and $E_j$ are parallel (i.e. if $\a+\beta=0, \pm\pi$ ). 

Put $R=[0,a_i]\times[0,a_j]$. 
Then $\Psi_{E_i,E_{j}}$ is smooth in a neighbourhood of $r$ 
if the level set $L_r$ either does not intersect the boundary of $R$ or intersects it transversely at a point that is not a vertex of $\partial R$. 
Namely, the smoothness of $\Psi_{E_i,E_{j}}$ may fail at $r$ when the level set $L_r$ either passes through a vertex of $R$ or is tangent to the boundary of $R$. 
The former is the case in which $r$ is the distance between endpoints of edges, and the latter is the case in which $r$ is the height of a vertex from a side, i.e. the length of the perpendicular from the vertex to the side when the foot of the perpendicular belongs to the side. 

\begin{definition}\label{multiset} \rm 
Put $H_{ji}=\mbox{pr}_i(P_j)$. 
Define the multiset (i.e. a set that allows multiple instances for each of its elements) of {\em critical lengths} by 
\begin{equation}\label{jump_multiset}
\mathcal{C}=[P_iP_j\,:\,j>i\,]\cup [P_jH_{ji}\,:\,H_{ji}\in \mbox{\rm Int} E_i], 
\end{equation}
where $\mbox{\rm Int}E_i$ is the interior of a side $E_i$. 
We call $P_iP_j$ the {\em inter-vertex length} ({\em side length} when $|i-j|=1$ and {\em diagonal length} otherwise) and $P_jH_{ji}$ $(H_{ji}\in \mbox{\rm Int} E_i)$ the {\em interior height}. 

Let $c_0$ denote the smallest critical length. 
\end{definition}
\begin{proposition}\label{prop_smoothness_outside_C}
$\Psi_{E_i,E_{j}}$ and $\Psi_{\nu;\,E_i,E_{j}}$ are smooth at $l$ if $l\not\in\mathcal{C}$. 
\end{proposition}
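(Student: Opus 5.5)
Since the sides are segments, the weight $\langle\nu_x,\nu_y\rangle$ is constant on $E_i\times E_j$; it equals $\langle\nu_i,\nu_j\rangle$. Hence $\Psi_{\nu;\,E_i,E_j}=\langle\nu_i,\nu_j\rangle\,\Psi_{E_i,E_j}$, and it suffices to prove smoothness of $\Psi_{E_i,E_j}$. For $i=j$ this is immediate from \eqref{Psi_single}: the only nonsmooth point for $r>0$ is $r=a_i$, which is a side length and so lies in $\mathcal{C}$. We may therefore assume $i\ne j$.

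As in the discussion preceding Definition \ref{multiset}, we write $\Psi_{E_i,E_j}(r)=\operatorname{Area}(S_r\cap R)$ with $R=[0,a_i]\times[0,a_j]$ and $f$ as in \eqref{distance_sq}. Here $f(s,t)=|X(s)-Y(t)|$, where $X$ and $Y$ are affine parametrizations by arclength. We fix $l\notin\mathcal{C}$ with $l>0$.

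The plan is to show that, near $r=l$, the region $S_r\cap R$ is a region bounded by finitely many curves that depend smoothly on $r$. To do this we verify three things.

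First, $L_l$ contains no vertex of $R$. The vertices of $R$ correspond to pairs of endpoints of $E_i$ and $E_j$, so $f$ at a vertex is an inter-vertex distance, which lies in $\mathcal{C}$. The exception is when the two endpoints coincide, as for adjacent sides, where $f=0<l$.

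Second, $\nabla f\ne0$ on $L_l\cap R$, so $L_l$ is a smooth curve there. If $\nabla f=0$ at an interior point of $R$, then $X-Y$ is orthogonal to both sides. This can only happen when the sides are parallel, and then $f$ is constant along a line segment. This is the degenerate case, which I would treat separately.

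Third, $L_l$ meets each edge of $\partial R$ transversally. An edge of $\partial R$ corresponds to fixing $X$ at a vertex $P$ (or $Y$ at $P'$) while the other point runs along the opposite side. Tangency of $L_l$ to such an edge at an interior point means $\partial_t f=0$, i.e. $X-Y\perp E_j$. Then $Y$ is the foot of the perpendicular from $P$ to $E_j$, lying in the interior of $E_j$, and $l=f$ is an interior height, which lies in $\mathcal{C}$.

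Given these three facts, the area is smooth near $l$. Near $l$, the curve $L_r\cap R$ consists of finitely many arcs, each a smooth graph over a coordinate direction. Their endpoints lie on the open edges of $R$ and depend smoothly on $r$ by the implicit function theorem, thanks to transversality. One can then write
\[
\operatorname{Area}(S_r\cap R)=\int_R \mathbf 1_{\{f\le r\}}\,ds\,dt .
\]
Its $r$-derivative equals the coarea integral $\int_{L_r\cap R}|\nabla f|^{-1}\,d\sigma$. This is an integral of a smooth density over arcs with smoothly moving endpoints, so it is smooth in $r$; differentiating repeatedly gives $C^\infty$.

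The step I expect to be the main obstacle is the degenerate case of parallel sides. There the level sets can be pairs of lines and $f$ has a whole critical segment, at level equal to the distance $d$ between the parallel lines $\ell_i$ and $\ell_j$.

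For $r$ near $d$, I would compute directly. Using $f^2=d^2+(s\pm t-c)^2$, the set $S_r\cap R$ is a strip $|s\pm t-c|\le\sqrt{r^2-d^2}$ intersected with $R$. Its area is piecewise polynomial in $\sqrt{r^2-d^2}$, and this function is nonsmooth at $r=d$ precisely when the strip meets $R$ at $r=d$, i.e. when some interior point is at distance $d$ from the other side.

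This requires the projection of one side onto the line of the other to overlap it in a set with interior. In that case there is a vertex whose perpendicular foot lies in the interior of the other side, or the endpoints project onto each other. Hence $d$ is an interior height or an inter-vertex distance, and so $d\in\mathcal{C}$. The remaining kinks of the strip area, where the strip boundary passes through corners of $R$, are again inter-vertex distances. So all nonsmooth points lie in $\mathcal{C}$, and every $l\notin\mathcal{C}$ is a smooth point.
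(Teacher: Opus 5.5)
Your proof is correct and follows the same route as the paper. The paper justifies this proposition by the discussion preceding Definition \ref{multiset}: smoothness can fail only when $L_r$ passes through a vertex of $R$ (an inter-vertex length) or is tangent to $\partial R$ (an interior height). It adds the remark that interior critical points occur only for parallel sides, whose critical value is again an inter-vertex length or an interior height. Your version supplies the details the paper leaves implicit: the nonvanishing of $\nabla f$ off the parallel case, the coarea and implicit-function argument for smoothness of the area, and the explicit strip computation for parallel sides.
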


We remark that if a critical value of the distance function is given by a pair of interior points of sides then two sides must be parallel and the value is also given as either an inter-vertex length or an interior height. 

We will see that from the jumps of the values or the blowing up terms of the second and third derivatives we obtain information on angles.

%!!!!!!!!!!!!!!!!!!!!!!!!!!!!!!!!!!!!!%%%%%%%%%%%%%%%%%%%%%%%%%%%%%%%%%%%%
\subsubsection{The case when the level set passes through a vertex}
%!!!!!!!!!!!!!!!!!!!!!!!!!!!!!!!!!!!!!%%%%%%%%%%%%%%%%%%%%%%%%%%%%%%%%%%%%

We begin with 
the case when the level set $L_b$ passes through exactly one vertex of $R=[0,a_i]\times[0,a_j]$, but is not tangent to the boundary there. 
Let $\mathcal{C}_{E_i,E_{j}}$ be a submultiset of $\mathcal{C}$ coming from a pair $E_i$ and $E_j$, i.e. 
$$\begin{array}{rcl}
\mathcal{C}_{E_i,E_{j}}=[P_kP_\la:k\in\{i-1,i\},\,\la\in\{j-1,j\}, \, k\ne \la]\cup[P_kH_{k\la}:H_{k\la}\in \mbox{\rm Int} E_\la, \,\{k',\la\}=\{i,j\}, \,  k\in\{k'-1,k'\}]. 
\end{array}$$

\begin{proposition}\label{lem_jump_Psi_endpoints}
Assume $\a,\beta\ne\pm\pi/2$. 
Assume $b$ appears only once in the submultiset $\mathcal{C}_{E_i,E_{j}}$. 
Then $\Psi_{\nu;E_i,E_{j}}''$ and $\Psi_{\nu;E_i,E_{j}}'''$ have jumps at $r=b$, whereas $\Psi_{\nu;E_i,E_{j}}'$ does not;
\begin{eqnarray}
\displaystyle J^{(1)}_{\nu;\, i,j}(b)
&=&\displaystyle 0, \nonumber \\[0mm]
\displaystyle J^{(2)}_{\nu;\, i,j}(b)
&=&\displaystyle \pm(1-\tan\a\cdot\tan\beta), \label{jump_nu} \\[0mm]
\displaystyle J^{(3)}_{\nu;\, i,j}(b)
&=&\displaystyle \pm(1-\tan\a\cdot\tan\beta)\left(\tan\a\cdot\tan\beta-\tan^2\a-\tan^2\beta\right)\frac1b, \label{jump_nu_3} 
\end{eqnarray}
where the signs are positive when the orientations of the two sides $E_i$ and $E_j$ both match the orientation of $\overrightarrow{PP'}$ or both do not, and negative otherwise. 

We remark that the formulae hold even if $\a,\beta=0$. 
\end{proposition}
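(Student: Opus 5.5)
Since $E_i$ and $E_j$ are straight segments, $\langle\nu_x,\nu_y\rangle$ is a constant $\pm\cos(\a+\beta)$ on $E_i\times E_j$, with the sign fixed by whether the parametrizations $s,t$ (measured from $P$ and $P'$) agree with the orientations of the sides. Hence $\Psi_{\nu;E_i,E_j}=\pm\cos(\a+\beta)\,\Psi_{E_i,E_j}$, and it suffices to compute the jumps of $\Psi_{E_i,E_j}(r)=\mathrm{Area}(S_r\cap R)$ at $r=b$, which is the value of $f$ at the corner $(0,0)$ of $R$. By hypothesis no other element of $\mathcal{C}_{E_i,E_j}$ equals $b$. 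So near $r=b$ the only non-smooth contribution, by Proposition \ref{prop_smoothness_outside_C} and the discussion before it, comes from the corner $(0,0)$. Transversality there means that $\nabla f(0,0)$ is not parallel to either axis, i.e. $\a,\beta\ne\pm\pi/2$. I therefore localize: write the area as a smooth function of $r$ plus the area of $S_r\cap Q$, where $Q=\{s\ge0,t\ge0\}$ is intersected with a small disc around the origin, and compute the latter as a germ in $r-b$.

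For the local computation, put $u=f^2-b^2=2b(s\cos\a+t\cos\beta)+q(s,t)$, where $q=s^2+t^2+2st\cos(\a+\beta)$. The level curve $f=r$ near the origin is close to the line $s\cos\a+t\cos\beta=(r^2-b^2)/(2b)$. Take the case $\cos\a,\cos\beta>0$, so the corner is a local minimum of $f|_Q$ and for $r>b$ a small curvilinear triangle appears. Using the linear change of variables $(s,t)\mapsto(\sigma,\tau)=(s\cos\a,t\cos\beta)$ with Jacobian $1/(\cos\a\cos\beta)$, I expand the area of $\{u\le \varepsilon\}\cap Q$, where $\varepsilon=r^2-b^2$, as $A_2\varepsilon^2+A_3\varepsilon^3+O(\varepsilon^4)$. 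The leading term is the triangle area $\varepsilon^2/(8b^2\cos\a\cos\beta)$. The cubic correction comes from the quadratic term $q$ integrated over that triangle, which is a first-order perturbation of the boundary, computed by $\int_{\text{edge}} q/|\nabla u|$. The other sign cases of $\cos\a,\cos\beta$ work the same way: when the corner is a saddle or a maximum of $f|_Q$, the germ of the area is smooth plus a one-sided term of the same form, up to sign. Then I substitute $\varepsilon=(r-b)(2b+(r-b))$. The germ is $C^1$, which gives $J^{(1)}=0$. The coefficients of $(r-b)^2$ and $(r-b)^3$ give $J^{(2)}=2\cdot 4b^2A_2$ and $J^{(3)}=6(\dots)$.

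The final step is the algebra. Multiplying by $\cos(\a+\beta)$, I get $\cos(\a+\beta)/(\cos\a\cos\beta)=1-\tan\a\tan\beta$ from the leading term, which is \eqref{jump_nu}. The cubic coefficient, after dividing by this factor, should simplify to $(\tan\a\tan\beta-\tan^2\a-\tan^2\beta)/b$. I expect the main obstacle to be this third-order term. It mixes the curvature of the level curve through $q$ with the $(r-b)$-expansion of $\varepsilon$, and it needs careful bookkeeping of the sign cases so that the same formula and the stated sign rule hold uniformly, including $\a$ or $\beta=0$. As an alternative check, I would compute $\Psi'(r)$ directly as the length of $L_r\cap R$ divided by $|\nabla f|$, parametrize the small arc near the corner explicitly, and differentiate twice.
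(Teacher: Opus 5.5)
Your proposal is correct, but your local computation differs from the paper's.

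\textbf{The minimum case.} The paper sets $b=1$ and expands the axis intercepts $s_0,t_0$ of $L_r$ in $\rho=r-b$ using \eqref{range_of_t}. It then writes $S_r\cap R$ as the triangle $\triangle OQ_1Q_2$ plus the cap $U$ between the chord $[Q_1Q_2]$ and the ellipse. The cap area comes from an affine map to the unit circle and the $\sigma^3/12$ segment formula. The parallel case $\alpha+\beta=0$ is treated separately because the ellipse degenerates.

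You expand in $\varepsilon=r^2-b^2$ instead. In that variable the leading region is an exact triangle, and all the curvature enters through one first variation $-\int_{\mathrm{edge}}q/|\nabla u|$. This covers $\alpha+\beta=0$ and $\alpha$ or $\beta=0$ with no extra work.

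The simplification you expect does happen:
\begin{itemize}
\item With $S=\varepsilon/(2b\cos\alpha)$ and $T=\varepsilon/(2b\cos\beta)$, the variation equals $-\varepsilon^3(3+\tan^2\alpha+\tan^2\beta-\tan\alpha\tan\beta)/(48b^4\cos\alpha\cos\beta)$.
\item After substituting $\varepsilon=2b\rho+\rho^2$, the constant $3$ cancels the $\rho^3$ term of $\varepsilon^2/(8b^2\cos\alpha\cos\beta)$.
\item This leaves $(\tan\alpha\tan\beta-\tan^2\alpha-\tan^2\beta)\rho^3/(6b\cos\alpha\cos\beta)$, which gives \eqref{jump3}.
\end{itemize}
To justify the $O(\varepsilon^4)$ remainder, rescale $(s,t)\mapsto\varepsilon(s,t)$. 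The region becomes $\{2b(s\cos\alpha+t\cos\beta)+\varepsilon q\le 1\}\cap Q$, a smooth family with transversal corners. So its area is smooth in $\varepsilon$, and $A_3$ is its first derivative.

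\textbf{The other sign cases.} The paper does no new computation here. It adjoins $\widetilde E_j$ and obtains $J^{(p)}(\alpha,\beta)=-J^{(p)}(\alpha,\beta\pm\pi)$. Your sentence about saddles and maxima is true, but you should make it concrete:
\begin{itemize}
\item For $\cos\alpha>0>\cos\beta$, write $Q$ as the half-plane $\{s\ge0\}$ minus the quadrant $\{s\ge0,\ t\le0\}$.
\item The half-plane part is smooth in $r$, because $L_r$ crosses $\{s=0\}$ transversally when $\cos\beta\neq0$.
\item In the coordinate $-t$, the quadrant is a minimum-type corner with $\beta$ replaced by $\beta\pm\pi$. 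This is exactly the local form of the paper's extension identity.
\item The maximum case follows by applying the decomposition twice.
\end{itemize}
Since $\tan$ has period $\pi$ while $1/(\cos\alpha\cos\beta)$ changes sign, one uniform formula results.

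Your route gives a single uniform local computation. The paper's route is shorter in the sign cases, and it produces the extension identity that it reuses in Corollary \ref{cor_jump_adjacent}.
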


\begin{proof}
We show 
\begin{eqnarray}
\displaystyle J^{(1)}_{i,j}(b)
&=&\displaystyle 0, \label{jump0} \\[0mm]
\displaystyle J^{(2)}_{i,j}(b)
&=&\displaystyle \frac1{\cos\a\,\cos\beta}, \label{jump} \\[0mm]
\displaystyle J^{(3)}_{i,j}(b)
&=&\displaystyle \frac1{\cos\a\,\cos\beta}\left(\tan\a\cdot\tan\beta-\tan^2\a-\tan^2\beta\right)\frac1b. \label{jump3} 
\end{eqnarray}
Then the assertion follows since $\langle\nu_i,\nu_j\rangle=\pm\cos(\a+\beta)$, where the sign depends on the orientations of $E_i$ and $E_j$ with respect to the orientation of $PP'$. 

\smallskip
Step 1. First, assume $0<|\a|,|\beta|<\pi/2$. 
When $r\ge b$ and $r-b\ll 1$, for $t\ge0$, there exists $s\ge0$ that satisfies $f(s,t)\le r$ if and only if 
\begin{equation}\label{range_of_t}
0\le t\le -b\cos\beta+\sqrt{r^2-b^2\sin^2\beta}. 
\end{equation}
We expand $\Psi_{E_i,E_{j}}(r)$ in a series in \[\rho=r-b.\] 

The level set $L_r$ passes through the origin when $r=b$. Since $b$ appears only once in the multiset $\mathcal{C}_{E_i,E_{j}}$, the level set $L_b$ does not pass through the other vertices of $R=[0,a_i]\times[0,a_j]$ and does not touch the boundary of $R$. 
Therefore the jumps in the second and third derivatives come from the contribution of a neighbourhood of the origin. 
The gradient of $f$ at the origin is given by 
\[
\nabla f(0,0)=(\cos\a, \cos\beta). 
\]

(i) To avoid the degeneracy in the following proof, we further assume that $\a+\beta\ne0$ here, so that $E_i$ and $E_j$ are not parallel. 
Then $\nabla f(0,0)$ is in the first quadrant $R_{++}$. 
We assume $b=1$ for simplicity's sake in what follows. 
This is possible since, under a dilation by a factor $\la>0$, we have 
$b\to\la b, \> r\to\la r, \> \Psi \to \la^2\Psi$, which implies that $J^{(2)}$ is unchanged and $J^{(3)}\to (1/\la)J^{(3)}$. 
Let $s_0$ and $t_0$ be the intercepts of the level set $L_r$ and the $s$ and $t$ axes, and put $Q_1=(s_0,0)$ and $Q_2=(0,t_0)$. 
\begin{figure}[htbp]
\begin{center}
\includegraphics[width=.15\linewidth]{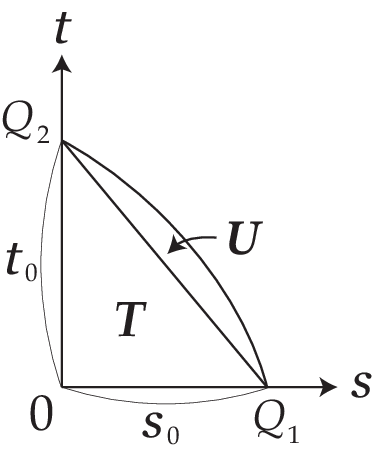}
\caption{}
\label{TU}
\end{center}
\end{figure}
Let $T$ be the triangle $\triangle OQ_1Q_2$ and $U$ $(U\subset R)$ be a region bounded by the level set $L_r$ and the line segment $[Q_1Q_2]$ (Figure \ref{TU}). 
Then $S_r\cap R=T\cup U$, where $S_r$ is the sublevel set of $f$. 
Since \eqref{range_of_t} indicates 
\[
s_0=\frac1{\cos\a}\,\rho-\frac{\tan^2\a}{2\cos\a}\,\rho^2+O(\rho^3), \quad
t_0=\frac1{\cos\beta}\,\rho-\frac{\tan^2\beta}{2\cos\beta}\,\rho^2+O(\rho^3),
\]
the area of the triangle $T$ satisfies 
\begin{equation}\label{Area_T}
A(T)=\frac1{2\cos\a\cos\beta}\,\rho^2-\frac{\tan^2\a+\tan^2\beta}{4\cos\a\cos\beta}\,\rho^3+O(\rho^4). 
\end{equation}

We estimate the area of $U$. 
Remark that, under the assumption that $\a+\beta\ne0$, \eqref{distance_sq} indicates that the level set $L_r$ is an ellipse that can be expressed as 
\[
\frac{(u-u_0)^2}{A^2}+\frac{(v-v_0)^2}{B^2}=1, \quad u=\frac{s+t}{\sqrt2}, \> v=\frac{-s+t}{\sqrt2},
\]
where 
\[
A=\frac{r}{\sqrt{1+\cos(\a+\beta)}}, \> B=\frac{r}{\sqrt{1-\cos(\a+\beta)}}, 
\> u_0=-\frac{\cos\a+\cos\beta}{\sqrt2\,(1+\cos(\a+\beta))}, 
\> v_0=\frac{\cos\a-\cos\beta}{\sqrt2\,(1-\cos(\a+\beta))}. 
\]
Let $\xi\colon(u,v)\mapsto(u/A,v/B)$ be an affine map that maps $L_r$ to the unit circle. 
Since 
\[|\xi(Q_1)-\xi(Q_2)|=(\tan\a+\tan\beta)\rho+O(\rho^2)\]
and the area of a region bounded by the unit circle and a chord with length $\sigma$ %(Figure \ref{chord_sigma}) 
is given by $\sigma^3/12+O(\sigma^4)$, 
\begin{eqnarray}
A(U)&=&AB\cdot\frac1{12}(\tan\a+\tan\beta)^3\rho^3 + \>\mbox{ higher order terms of }\> \rho \nonumber \\
&=&\frac{\tan^2\a+\tan^2\beta+2\tan\a\tan\beta}{12\cos\a\cos\beta}\rho^3+O(\rho^4). \label{Area_U}
\end{eqnarray}
By \eqref{Area_T} and \eqref{Area_U} we have 
\[
\Psi_{E_i,E_{j}}(r)=\frac1{2\cos\a\cos\beta}\,\rho^2+\frac{\tan\a\cdot\tan\beta-\tan^2\a-\tan^2\beta}{6\cos\a\cos\beta}\,\rho^3+O(\rho^4),
\]
which implies \eqref{jump} and \eqref{jump3}. 

\smallskip
(ii) When $\a+\beta=0$, the level set $L_r$ degenerates to a line, so that $U=\emptyset$ and $A(U)=0$; since \eqref{Area_T} does not depend on whether $\a+\beta=0$ or not, the expansion below remains valid.

\smallskip
Step 2. Next we show that the cases when $|\a|>\pi/2$ or $|\beta|>\pi/2$ can be obtained algebraically. 
Let $J^{(p)}(\a,\beta)$ $(|\a|,|\beta|\ne\pi/2, p=2,3)$ be the jump of $\Psi_{E_i,E_j}^{(p)}(r)$ at $r=b$. 
Let $\widetilde{E}_j$ $(\widetilde{E}_j\ne E_j)$ be an edge in the line containing $E_j$ that shares the endpoint $P'$ and is on the opposite side to $E_j$ (Figure \ref{generic_endpoints_tilde}). 
\begin{figure}[htbp]
\begin{center}
\includegraphics[width=.25\linewidth]{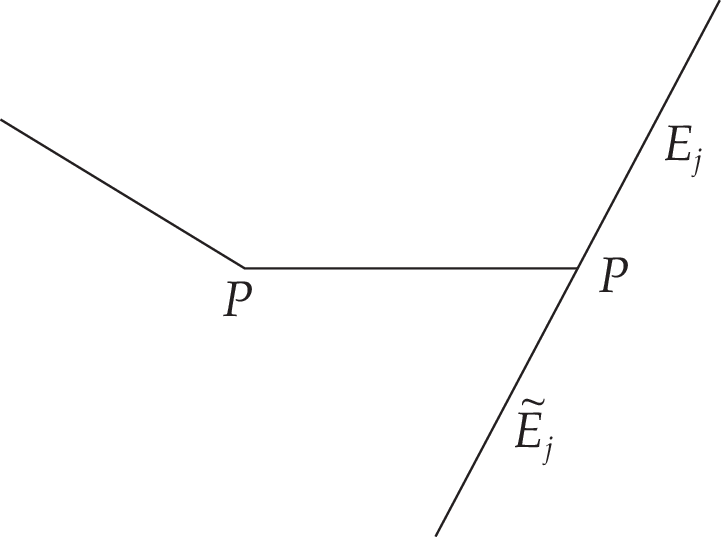}
\caption{}
\label{generic_endpoints_tilde}
\end{center}
\end{figure}
Then $\Psi_{E_i, E_j\cup\widetilde{E}_j}$ is smooth at $r=b$. 
This is because the smoothness fails if and only if $b$ is the distance between $P$ and $E_j\cup\widetilde{E}_j$, which can occur if and only if $\beta=\pm \pi/2$. 
It follows that 
\begin{equation}\label{formula_J_algebraic}
J^{(p)}(\a,\beta)=-J^{(p)}(\a,\beta\pm\pi)=-J^{(p)}(\a\pm\pi,\beta) \quad (p=2,3), \nonumber
\end{equation}

where the second equality comes from the symmetry in $\a$ and $\beta$. 
Now \eqref{jump} and \eqref{jump3} for any pattern of $(\a,\beta)$ can be obtained by applying the above equality 
to the case when $0<|\a|,|\beta|<\pi/2$ (Figure \ref{generic_endpoints_algebraic}). 
\begin{figure}[htbp]
\begin{center}
\includegraphics[width=.8\linewidth]{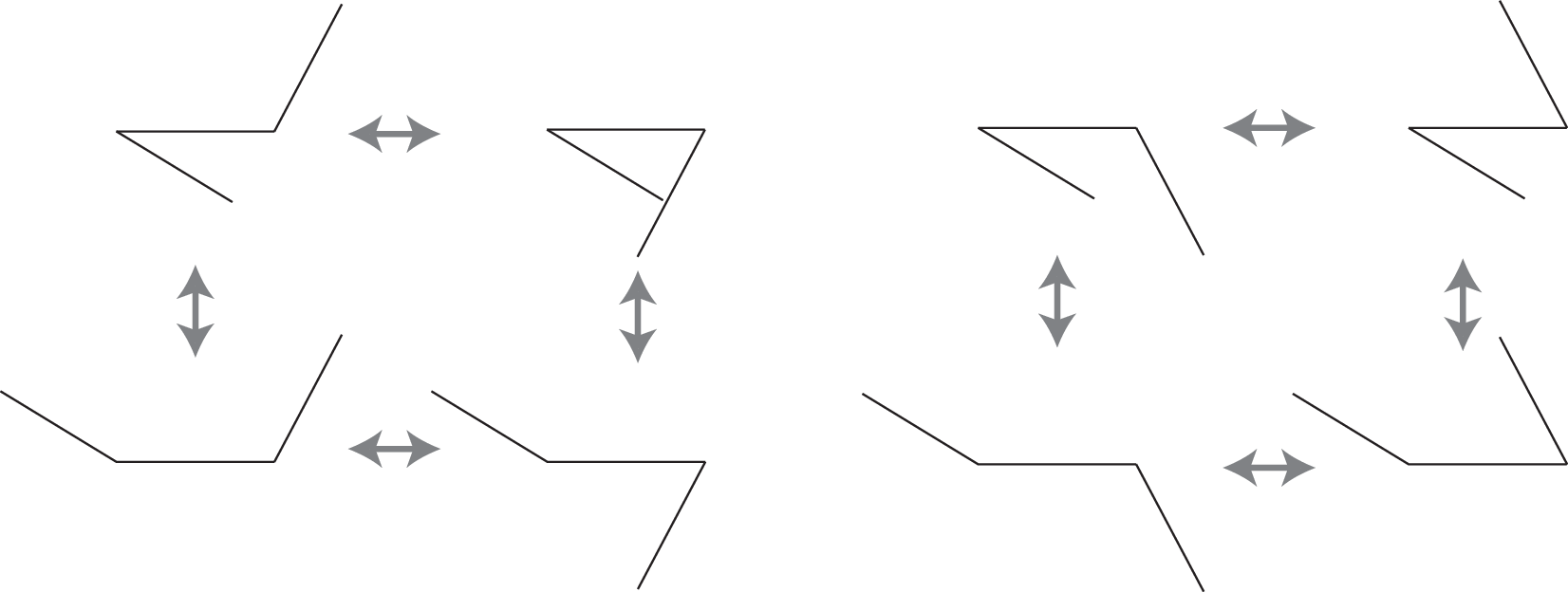}
\caption{}
\label{generic_endpoints_algebraic}
\end{center}
\end{figure}

\end{proof}

\begin{remark}\rm 
The formulae can also be verified by direct computation. 
For $0<|\a|,|\beta|<\pi/2$ and $r\ge b$ and $r-b\ll 1$, the region $\{f(s,t)\le r\}\cap R$ is described by \eqref{range_of_t} and 
\begin{equation}\label{}
0\le s \le -b\cos\a-t\cos(\a+\beta)+\sqrt{-t^2\sin^2(\a+\beta)+2bt(\cos\a\cos(\a+\beta)-\cos\beta)+r^2-b^2\sin^2\a}\, ,\nonumber
\end{equation}
which gives $\Psi_{E_i,E_{j}}(r)$ as an explicit but lengthy elementary function. 
Differentiating it three times and letting $r\searrow b$ yields \eqref{jump0}, \eqref{jump} and \eqref{jump3}. 
The author used Maple for this computation. 
\end{remark}

We remark that if exactly one of $\a$ and $\beta$ is equal to $\pm\pi/2$ then the level set $L_b$ is tangent to $\partial R$ at the origin. 

\begin{corollary}\label{cor_jump_adjacent}
Consider adjacent sides $E_i$ and $E_{i+1}$. 
Assume $a_i, a_{i+1}$ and $P_{i-1}P_{i+1}$ are all distinct. 
If we let $J^{(2)}_\nu(\a,\beta)$ and $J^{(3)}_\nu(\a,\beta)$ denote the right hand sides of \eqref{jump_nu} and \eqref{jump_nu_3} with $b$ in \eqref{jump_nu_3} being replaced by $a_i$, the jumps of the derivatives of $\Psi_{\nu;E_i,E_{i+1}}(r)$ at $r=a_i$ are given by 
\begin{equation}
J^{(p)}_{\nu;{i},{i+1}}(a_i)=-J^{(p)}_{\nu}(0,\t_i). 
\end{equation}
\end{corollary}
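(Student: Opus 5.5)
The plan is to read the statement as a direct specialization of Proposition \ref{lem_jump_Psi_endpoints}, applied to the pair of endpoints of $E_i$ and $E_{i+1}$ that realizes the distance $a_i$. Write $E_i=[P_{i-1}P_i]$ and $E_{i+1}=[P_iP_{i+1}]$. The length $a_i$ is the distance from $P=P_{i-1}$ (an endpoint of $E_i$) to $P'=P_i$ (an endpoint of $E_{i+1}$), so we take $b=a_i$.

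First I will identify the angles $\a,\beta$ in \eqref{distance_sq}. A point $X\in E_i$ at distance $s$ from $P_{i-1}$ satisfies $|X-P_i|=a_i-s$, so $f(s,0)^2=b^2-2bs+s^2$, which forces $\cos\a=-1$, i.e.\ $\a=\pi$. A point $Y\in E_{i+1}$ at distance $t$ from $P_i$ satisfies $|Y-P_{i-1}|^2=a_i^2+t^2+2a_it\cos\t_i$, because the angle between $\overrightarrow{P_{i-1}P_i}$ and $\overrightarrow{P_iP_{i+1}}$ is $\t_i$. Hence $\beta=\t_i$.

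Next I will check the hypotheses of Proposition \ref{lem_jump_Psi_endpoints}. The condition $\a\ne\pm\pi/2$ holds since $\a=\pi$. We need $a_i$ to occur only once in $\mathcal{C}_{E_i,E_{i+1}}$. The inter-vertex lengths in this submultiset are $a_i$, $a_{i+1}$ and $P_{i-1}P_{i+1}$ (the pairs involving $P_i$ twice are excluded), and these are distinct by assumption. The interior heights are those of $P_{i-1}$ onto $E_{i+1}$ and of $P_{i+1}$ onto $E_i$. The first equals $a_i|\sin\t_i|<a_i$ unless $\t_i=\pm\pi/2$. The second, $a_{i+1}|\sin\t_i|$, is the delicate point: it is not excluded by the stated hypotheses. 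Here I would argue that its singular contribution sits at a different location of $\partial R$, namely the tangency with the side $t=a_{i+1}$, and not at the corner $(0,0)$. I would then either invoke the genericity intended in the statement or note that the jump at the corner $(0,0)$ is computed locally, exactly as in Step 1 of the proof of Proposition \ref{lem_jump_Psi_endpoints}.

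The case $\beta=\t_i=\pm\pi/2$ is where I expect the main obstacle, since the proposition excludes it and the level set is then tangent to $\partial R$ at the corner. I would handle it by continuity in $\t_i$, or by the reflection trick of Step 2 (replacing $E_{i+1}$ by its extension $\widetilde E_{i+1}$). Failing that, I would simply note that this case must be excluded.

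Finally, I will evaluate the jumps. From \eqref{jump} and \eqref{jump3} the unweighted jumps are
\[
J^{(2)}_{i,i+1}(a_i)=\frac{1}{\cos\pi\cos\t_i}=-\frac1{\cos\t_i},\qquad J^{(3)}_{i,i+1}(a_i)=-\frac1{\cos\t_i}\,\bigl(-\tan^2\t_i\bigr)\frac1{a_i},
\]
using $\tan\pi=0$. Multiplying by the constant weight $\langle\nu_i,\nu_{i+1}\rangle=\cos\t_i$ gives
\[
J^{(2)}_{\nu;i,i+1}(a_i)=-1=-J^{(2)}_\nu(0,\t_i),\qquad J^{(3)}_{\nu;i,i+1}(a_i)=-(-\tan^2\t_i)/a_i=-J^{(3)}_\nu(0,\t_i),
\]
where $J^{(p)}_\nu(0,\t_i)$ is taken with the positive sign. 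Using the weight $\cos\t_i$ directly sidesteps the orientation bookkeeping of the $\pm$ in Proposition \ref{lem_jump_Psi_endpoints}. Equivalently, the result follows from the relation $J^{(p)}(\a,\beta)=-J^{(p)}(\a\pm\pi,\beta)$ established in Step 2.
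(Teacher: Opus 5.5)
Your proposal is correct and is essentially the paper's argument. The paper extends $E_i$ beyond $P_{i-1}$ by a segment $\widetilde E_i$, which gives the configuration $\a=0$, $\beta=\t_i$, and uses the smoothness of $\Psi_{\nu;\widetilde E_i\cup E_i,E_{i+1}}$ at $a_i$; this is the identity $J^{(p)}(\a,\beta)=-J^{(p)}(\a\pm\pi,\beta)$ from Step 2, which is also what justifies your direct use of Proposition \ref{lem_jump_Psi_endpoints} at $\a=\pi$. The two hypotheses you flag, $\t_i\ne\pm\pi/2$ and $a_i\ne a_{i+1}|\sin\t_i|$ (the interior height of $P_{i+1}$ onto $E_i$), are genuinely needed and are tacit in the paper too; continuity in $\t_i$ cannot rescue $\t_i=\pm\pi/2$, since there the weight $\cos\t_i$ vanishes while the right-hand side is undefined.
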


\begin{proof}
Let $\widetilde{E}_i$ be the extension of side $E_i$ to the opposite side of $E_{i+1}$ (Figure \ref{Jump_a=0}). 
\begin{figure}[htbp]
\begin{center}
\includegraphics[width=.5\linewidth]{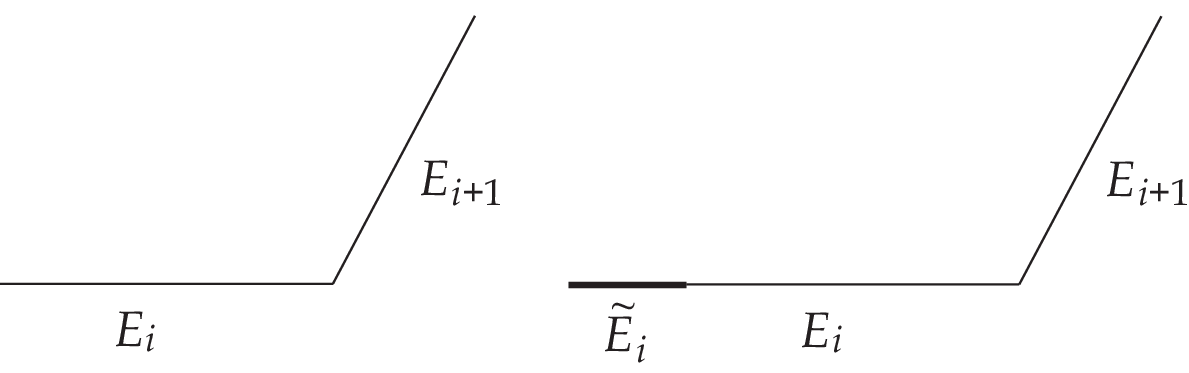}
\caption{}
\label{Jump_a=0}
\end{center}
\end{figure}
Then $\Psi_{\nu;\widetilde{E}_i\cup E_i,E_{i+1}}(r)$ is smooth at $a_i$. Therefore, 
\[
0=J^{(p)}_{\nu;\widetilde{E}_i\cup E_i,E_{i+1}}(a_i)
=J^{(p)}_{\nu;\widetilde{E}_i,E_{i+1}}(a_i)+J^{(p)}_{\nu;E_i,E_{i+1}}(a_i). 
\]
On the other hand, $J^{(p)}_{\nu;\widetilde{E}_i,E_{i+1}}(a_i)=J^{(p)}_{\nu}(0,\t_i)$, which implies the conclusion. 
\end{proof}

\begin{corollary}\label{cor_jumps_to_angles}
Consider three consecutive sides $E_{i-1}, E_i$ and $E_{i+1}$. 
Assume $\t_{i-1}, \t_i\ne\pm\pi/2$, and $a_i$ % %, the length of $E_i$, 
appears only once in the sub-multiset of $\mathcal{C}$ obtained from $E_{i-1}\cup E_i\cup E_{i+1}$. 
Then we have 
\begin{eqnarray}
\displaystyle J^{(2)}_{\nu;\,E_{i-1}\cup E_i\cup E_{i+1}}(a_i)
&=& \displaystyle -2\tan\t_{i-1}\tan\t_i,  \label{jump_Psi_E0E1E2_nu''} \\[2mm]
\displaystyle J^{(3)}_{\nu;\,E_{i-1}\cup E_i\cup E_{i+1}}(a_i)
&=& \displaystyle \frac2{a_i}\tan\t_{i-1}\tan\t_i\left(\tan^2\t_{i-1}-\tan\t_{i-1}\tan\t_i+\tan^2\t_i+1\right). \quad{\phantom{a}}  \label{jump_Psi_E0E1E2_nu'''} 
\end{eqnarray}
\end{corollary}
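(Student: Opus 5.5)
The plan is to expand the $\nu$-weighted IDD of the union into pair contributions and add up the jumps at $r=a_i$ using Proposition \ref{lem_jump_Psi_endpoints} and Corollary \ref{cor_jump_adjacent}. Writing $Y=E_{i-1}\cup E_i\cup E_{i+1}$, I decompose
\[
\Psi_{\nu;Y}=\sum_{k=i-1}^{i+1}\Psi_{\nu;E_k,E_k}+2\Psi_{\nu;E_{i-1},E_i}+2\Psi_{\nu;E_i,E_{i+1}}+2\Psi_{\nu;E_{i-1},E_{i+1}},
\]
and take jumps at $r=a_i$ term by term. By Proposition \ref{prop_smoothness_outside_C}, a pair contributes a jump only if $a_i$ lies in its sub-multiset $\mathcal{C}_{E_k,E_m}$. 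The hypothesis that $a_i$ occurs exactly once among the critical lengths of $Y$ then leaves exactly four contributing terms, described below, each with only the single vertex configuration realizing $a_i=|P_{i-1}P_i|$. In particular $\Psi_{\nu;E_{i\pm1},E_{i\pm1}}$ are smooth at $a_i$, and the uniqueness hypothesis covers the distinctness assumptions needed in Corollary \ref{cor_jump_adjacent}.

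The four surviving terms are as follows. Write $t_1=\tan\t_{i-1}$ and $t_2=\tan\t_i$, and let $J^{(p)}_\nu(\a,\beta)$ be the right-hand sides of \eqref{jump_nu} and \eqref{jump_nu_3} with $b=a_i$.
\begin{enumerate}
\item $\Psi_{E_i,E_i}$: by \eqref{Psi_single} its jumps are $J^{(2)}=2$ and $J^{(3)}=0$.
\item $\Psi_{\nu;E_i,E_{i+1}}$: the distance $a_i$ is realized by $P_{i-1}\in E_i$ and $P_i\in E_{i+1}$. Corollary \ref{cor_jump_adjacent} gives the jump $-J^{(p)}_\nu(0,\t_i)$.
\item $\Psi_{\nu;E_{i-1},E_i}$: by the mirror-image version of the same corollary (extend $E_i$ beyond $P_i$), the jump is $-J^{(p)}_\nu(\t_{i-1},0)$.
\item $\Psi_{\nu;E_{i-1},E_{i+1}}$: here $P=P_{i-1}\in E_{i-1}$, $P'=P_i\in E_{i+1}$ and $b=a_i$, with signed exterior angles $\a=\t_{i-1}$ and $\beta=\t_i$ relative to $\overrightarrow{PP'}$. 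Proposition \ref{lem_jump_Psi_endpoints}, applicable since $\t_{i-1},\t_i\ne\pm\pi/2$, gives $\pm J^{(p)}_\nu(\t_{i-1},\t_i)$.
\end{enumerate}

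Next comes the sign bookkeeping. $E_{i-1}$ is oriented into $P$ and $E_{i+1}$ out of $P'$, so both agree with $\overrightarrow{PP'}$, and I expect the sign in item 4 to be $+$. The normalization in the adjacent case should be fixed so that $J^{(2)}_\nu(0,\t)=+1$. With these conventions the second-derivative jump is
\[
2+2\bigl[-1-1+(1-t_1t_2)\bigr]=-2t_1t_2 .
\]
For the third derivative, $-J^{(3)}_\nu(\t,0)=\tan^2\t/a_i$, so the total is
\[
\frac2{a_i}\Bigl[t_1^2+t_2^2+(1-t_1t_2)(t_1t_2-t_1^2-t_2^2)\Bigr].
\]
Expanding, this factors as $\frac{2}{a_i}t_1t_2(t_1^2-t_1t_2+t_2^2+1)$, which is \eqref{jump_Psi_E0E1E2_nu'''}.

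The main obstacle is the orientation and sign bookkeeping. I must check that the exterior angles at $P_{i-1}$ and $P_i$ measured against the segment $P_{i-1}P_i$ are exactly $\t_{i-1}$ and $\t_i$, not their negatives or supplements; since $J_\nu$ is even in each variable, only supplements ($\pm\pi$ shifts) actually flip the sign. I must also check that the signs of $\langle\nu_x,\nu_y\rangle$ in Proposition \ref{lem_jump_Psi_endpoints} and Corollary \ref{cor_jump_adjacent} are mutually consistent. I would verify these by testing a convex configuration with small positive $\t_{i-1},\t_i$, where every term can be checked against the unweighted formulas \eqref{jump} and \eqref{jump3} using $\langle\nu_x,\nu_y\rangle=\cos(\a+\beta)$ for adjacent sides.
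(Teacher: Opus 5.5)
Your proposal is correct and is essentially the paper's own proof. It uses the same decomposition $J_{\nu;i,i}+2\bigl(J_{\nu;i-1,i+1}+J_{\nu;i,i-1}+J_{\nu;i,i+1}\bigr)$, takes the adjacent terms from Corollary~\ref{cor_jump_adjacent} and the $(E_{i-1},E_{i+1})$ term from Proposition~\ref{lem_jump_Psi_endpoints} with the $+$ sign, and does the same algebra.

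One correction to your closing remark: $1-\tan\a\tan\beta$ is not even in each variable separately. It is invariant only under the simultaneous change $(\a,\beta)\mapsto(-\a,-\beta)$, so a sign slip in just one angle would change the answer. This does not affect your proof, because $(\a,\beta)=(\t_{i-1},\t_i)$ are literally the signed exterior angles of the path $P_{i-2}P_{i-1}P_iP_{i+1}$, as you correctly state in item 4.
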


\begin{proof}
The two equalities can be proved in the same way. 
Put $J^{(p)}_{\nu;\,k,l}=J^{(p)}_{\nu;\,k,l}(a_i)$ for $p=2,3$ and $k,l\in\{i-1,i,i+1\}$ (cf. \eqref{def_J}). 
Then, since $a_{i-1},a_{i+1}\ne a_i$ by the assumption, $J^{(p)}_{\nu;\,E_{i-1}\cup E_i\cup E_{i+1}}(a_i)$ $(p=2,3)$ 
can be expressed as 
\begin{equation}\label{f_J_1}
J^{(p)}_{\nu;\,i,i}+2\left(J^{(p)}_{\nu;\,{i-1},{i+1}}+J^{(p)}_{\nu;\,{i},{i-1}}+J^{(p)}_{\nu;\,{i},{i+1}}\right) \qquad p=2,3. 
\end{equation}
Let $J^{(2)}_\nu(\a,\beta)$ and $J^{(3)}_\nu(\a,\beta)$ be the right hand sides of \eqref{jump_nu} and \eqref{jump_nu_3} with $b$ being replaced by $a_i$ as in Corollary \ref{cor_jump_adjacent}. 
Then by definition 
\begin{equation}\label{f_J_2}
J^{(p)}_{\nu;\,{i-1},{i+1}}=J^{(p)}_{\nu}(\t_{i-1},\t_i).
\end{equation}
By Corollary \ref{cor_jump_adjacent} we have 
\begin{equation}\label{f_J_3}
J^{(p)}_{\nu;\,{i},{i-1}}(a_i)=-J^{(p)}_{\nu}(\t_{i-1},0), \quad
J^{(p)}_{\nu;\,{i},{i+1}}(a_i)=-J^{(p)}_{\nu}(0,\t_i),
\end{equation}
and \eqref{Psi_single} implies 
\begin{equation}\label{Jii}
J^{(2)}_{\nu;\, i,i}=2, \quad J^{(3)}_{\nu;\, i,i}=0. 
\end{equation}
The corollary's assertions follow from applying \eqref{jump_nu} and \eqref{jump_nu_3} to \eqref{f_J_2} and \eqref{f_J_3}, \eqref{Jii} and \eqref{f_J_1}. 
\end{proof}

Let $[XY]$ denote the line segment with endpoints $X$ and $Y$. 

\begin{corollary}\label{cor_jump_diagonal}
Suppose $A_1=[{P'P}]$ and $A_2=[{PP''}]$ (or $B_1=[{Q'Q}]$ and $B_2=[{QQ''}]$) are adjacent sides with common vertex $P$ (or $Q$ respectively). 
Let $\a_1,\a_2,\beta_1$ and $\beta_2$ be the signed exterior angles of $P'PQ, P''PQ$ at $P$ and $PQQ',PQQ''$ at $Q$ respectively (Figure \ref{generic_diagonal}). 
\begin{figure}[htbp]
\begin{center}
\includegraphics[width=.9\linewidth]{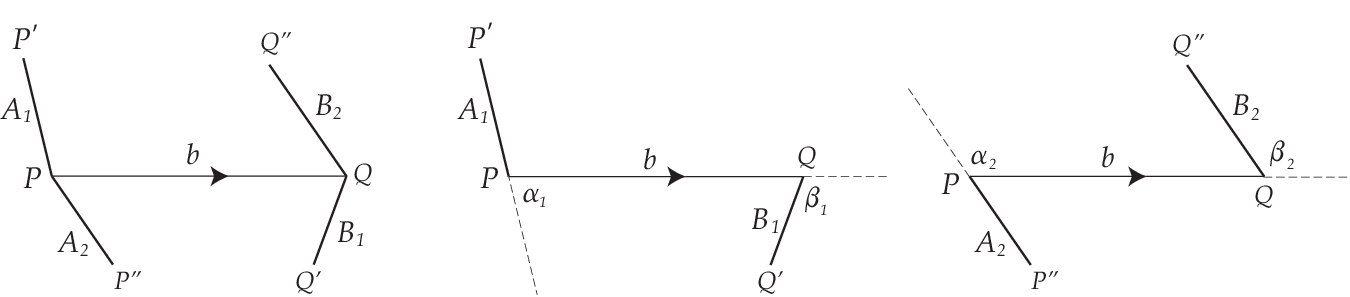}
\caption{In the case $\a_1>0, \beta_1<0, \a_2<0, \beta_2>0$ }
\label{generic_diagonal}
\end{center}
\end{figure}
Assume that none of $\a_1,\a_2,\beta_1$ and $\beta_2$ is equal to $\pm\pi/2$ and that the cyclic order of the sides is $A_1A_2B_1B_2$ or the reverse. 
If $b=PQ$ appears only once in the sub-multiset of $\mathcal{C}$ obtained from the pair $A_1\cup A_2$ and $B_1\cup B_2$, then the jumps of the second derivatives of $\Psi_{\nu;\,A_1\cup A_2, B_1\cup B_2}(r)$ at $b$ are given by 
\begin{eqnarray}
J^{(2)}_{\nu;\,A_1\cup A_2, B_1\cup B_2}(b)
&=&\displaystyle \left(\tan\a_1-\tan\a_2\right)\left(\tan\beta_1-\tan\beta_2\right). \label{jump_A1A2B1B2_nu} %\pm
\end{eqnarray}
\end{corollary}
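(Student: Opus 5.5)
The plan is to use bilinearity of $\Psi_{\nu;\,Y,W}$ in the pair of sets and reduce to Proposition \ref{lem_jump_Psi_endpoints}. Since $\Psi_{\nu;\,Y,W}$ is additive in $Y$ and in $W$ (the sides overlap only in vertices, which have measure zero), we have
\[
\Psi_{\nu;\,A_1\cup A_2,B_1\cup B_2}=\sum_{k,l\in\{1,2\}}\Psi_{\nu;\,A_k,B_l},
\]
so the jump at $b$ is the sum of the four jumps $J^{(2)}_{\nu;\,A_k,B_l}(b)$.

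For each pair $(A_k,B_l)$ the endpoints realizing the distance $b$ are $P\in A_k$ and $Q\in B_l$. The relevant signed exterior angles at $P$ and $Q$ are exactly $\a_k$ and $\beta_l$ as defined in the statement. By hypothesis $b$ occurs only once in the sub-multiset of $\mathcal{C}$ coming from $A_1\cup A_2$ and $B_1\cup B_2$. Hence for each pair it occurs only once in $\mathcal{C}_{A_k,B_l}$, and no other critical length from another pair coincides with $b$; in particular, no other pair contributes a jump at $b$. Since also $\a_k,\beta_l\ne\pm\pi/2$, Proposition \ref{lem_jump_Psi_endpoints} applies to each pair and gives
\[
J^{(2)}_{\nu;\,A_k,B_l}(b)=\epsilon_k\delta_l\,(1-\tan\a_k\tan\beta_l).
\]
Here $\epsilon_k=\pm1$ records whether the orientation of $A_k$ agrees with $\overrightarrow{PQ}$, and $\delta_l=\pm1$ does the same for $B_l$; the product rule for the sign is exactly the one stated in the proposition.

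The key bookkeeping step is to determine these signs. With the boundary orientation and the cyclic order $A_1A_2B_1B_2$, the side $A_1$ runs $P'\to P$ into $P$, while $A_2$ runs $P\to P''$ out of $P$. Therefore $\epsilon_2=-\epsilon_1$, and likewise $\delta_2=-\delta_1$. Summing the four terms, the constant $1$'s cancel and we get
\[
\epsilon_1\delta_1\bigl(-\tan\a_1\tan\beta_1+\tan\a_1\tan\beta_2+\tan\a_2\tan\beta_1-\tan\a_2\tan\beta_2\bigr)
=-\epsilon_1\delta_1(\tan\a_1-\tan\a_2)(\tan\beta_1-\tan\beta_2).
\]
For the reverse cyclic order all orientations flip, so $\epsilon_1\delta_1$ is unchanged and the same argument applies.

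The main obstacle is fixing $\epsilon_1\delta_1=-1$ carefully. I would check this in the configuration of Figure \ref{generic_diagonal}. The side $A_1$ points toward $P$, i.e. along $\overrightarrow{P'P}$; in the frame of Proposition \ref{lem_jump_Psi_endpoints} the parametrization runs away from $P$, which is opposite to the boundary orientation. The side $B_1$ ends at $Q$. Going from $P$ to $Q$, the direction of $A_1$ matches $\overrightarrow{PQ}$ in the proposition's convention, while $B_1$ does not, since it points into $Q$ from the far side. Thus exactly one matches, which gives $\epsilon_1\delta_1=-1$.

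Alongside this I would verify that the exterior-angle sign conventions for $\a_k,\beta_l$ in the statement coincide with those of Figure \ref{generic_endpoints}. If they differed by reflection, the tangents would change sign together at $P$ (respectively at $Q$), and the product $(\tan\a_1-\tan\a_2)(\tan\beta_1-\tan\beta_2)$ would be unaffected. This yields \eqref{jump_A1A2B1B2_nu}.
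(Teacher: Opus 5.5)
Your proposal is correct and takes the same route as the paper. The paper's one-line proof ("follows from \eqref{jump_nu}") is exactly the splitting of $\Psi_{\nu;\,A_1\cup A_2,B_1\cup B_2}$ into the four pair terms with \eqref{jump_nu} applied to each, and your sign bookkeeping ($\epsilon_2=-\epsilon_1$, $\delta_2=-\delta_1$, $\epsilon_1\delta_1=-1$) supplies what that line leaves implicit; the sign $\epsilon_1\delta_1=-1$ is most cleanly confirmed by $\langle\nu_{A_1},\nu_{B_1}\rangle=-\cos(\a_1+\beta_1)$, since $A_1$ is traversed toward $P$ and $B_1$ toward $Q$.
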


\begin{proof}
\eqref{jump_A1A2B1B2_nu} follows from \eqref{jump_nu}. 
\end{proof}

Remark that $J^{(2)}_{\nu;\,A_1\cup A_2, B_1\cup B_2}(b)\ne0$. 
This is because $\tan\a_1=\tan\a_2$ if and only if $\a_1=\a_2$ modulo $\pi$, which can occur if and only if $\angle P'PP''=0, \pi$. However, it is impossible by our assumption that no two adjacent sides are collinear.

%!!!!!!!!!!!!!!!!!!!!!!!!!!!!!!!!!!!!!%%%%%%%%%%%%%%%%%%%%%%%%%%%%%%%%%%%%
\subsubsection{The case when the level set is tangent}
%!!!!!!!!!!!!!!!!!!!!!!!!!!!!!!!!!!!!!%%%%%%%%%%%%%%%%%%%%%%%%%%%%%%%%%%%%

Next we consider the case when the level set $L_r$ is tangent to the boundary of $R$. 
First assume that a tangent point is a vertex of $R$, which can occur if and only if exactly one of $\a$ and $\beta$ is equal to $\pm\pi/2$. %$\a=\pm\pi/2$ or $\beta=\pm\pi/2$. 
The case when $\a,\beta=\pm\pi/2$ will be studied in Lemma \ref{lem_parallel} (i), (ii), and (iv). 

\begin{lemma}\label{lemma_tangency_vertex}
Assume $\a=\pi/2$ and $\beta\ne\pm\pi/2$. 
Denote $PP'$ by $h$ (Figure \ref{vertex-tangency}). 
Put $\Psi_\beta=\Psi_{E_i,E_j}$. 
\begin{enumerate}
\item There holds 
\begin{eqnarray}
\Psi_\beta'(h)&=&\displaystyle \left\{
\begin{array}{cl}
0 & \quad \displaystyle 0\le|\beta|<\frac\pi2,\\[0mm]
\displaystyle \frac{-h}{\cos\beta}\cdot\mbox{\rm arccot}\left(\frac{\sin^2\beta}{\sqrt{1-\sin^4\beta}}\right) 
& \quad \displaystyle \frac\pi2<|\beta|\le\pi,
\end{array}
\right. \label{Psi'_beta} 
\end{eqnarray}
where we agree that we take $(0,\pi)$ as the principal branch of arccot. 
We have $J^{(2)}_{i,j}(h)
=\pm\infty$. To be precise, 
\begin{empheq}[left={\displaystyle \lim_{r\nearrow h}\Psi_\beta''(r)=\empheqlbrace}]{align}
\mathmakebox[\widthof{$\displaystyle \frac{-1}{\cos\beta}\left(\mbox{\rm arccot}\left(\frac{\sin^2\beta}{\sqrt{1-\sin^4\beta}}\right)+\frac{\sin^2\beta}{\sqrt{1-\sin^4\beta}}+\tan\beta\right)$}][c]{0} & \qquad \displaystyle 0\le|\beta|<\frac\pi2,   \nonumber \\
\displaystyle 
\frac{-1}{\cos\beta}\left(\mbox{\rm arccot}\left(\frac{\sin^2\beta}{\sqrt{1-\sin^4\beta}}\right)+\frac{\sin^2\beta}{\sqrt{1-\sin^4\beta}}+\tan\beta\right)
& \qquad \displaystyle \frac\pi2<|\beta|\le\pi,   
\label{Psi''_beta_down1}
\end{empheq}
and the blowing-up term of $\Psi_\beta''(r)$ as $r$ decreases to $h$ is given by 
\begin{equation}\label{Psi''_beta_down_blowup}
\Psi_\beta''(r)= 
\displaystyle \frac{h}{\cos\beta \cdot \sqrt{r^2-h^2}}+O(1)
=\sqrt{\frac h2}\cdot\frac1{\cos\beta}\cdot\frac1{\sqrt\rho}+O(1)
\qquad  (\rho=r-h)\> (r\searrow h).  
\end{equation}
\item The blowing-up terms of $\Psi_{\beta}''(r)$ and $\Psi_{\pm(\beta+\pi)}''(r)$, where $\pm(\beta+\pi)$ is considered modulo $2\pi$, as $r$ decreases to $h$ cancel each other to have 
\begin{equation}\label{Psi''_beta_down}
\lim_{r\searrow h}\left(\Psi_{\beta}''(r)+\Psi_{\pm(\beta+\pi)}''(r)\right)=\lim_{r\nearrow h}\Psi_{\beta}''(r) \qquad \frac\pi2<|\beta|<\pi.
\end{equation}
\item The blowing-up term of $\Psi_{\nu;\,E_i,E_j}''$ as $r$ decreases to $h$ is given by 
\begin{equation}
\Psi_{\nu;\,E_i,E_j}''(r)\stackrel{r\searrow h}{\sim} 
\mp\sqrt{\frac h2}\cdot{\tan\beta}\cdot\frac1{\sqrt\rho}+O(1) \qquad (\rho=r-h),  
\end{equation}
where the sign is negative when the orientations of the two sides $E_i$ and $E_j$ both match the orientation of $\overrightarrow{PP'}$ or both do not, and positive otherwise. 
\end{enumerate}
\end{lemma}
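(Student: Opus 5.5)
We work in the $(s,t)$-coordinates of \eqref{distance_sq}. With $\a=\pi/2$ this reads
\[
f(s,t)^2=h^2+s^2+t^2+2ht\cos\beta-2st\sin\beta ,
\]
since $\cos(\pi/2+\beta)=-\sin\beta$. Then $\nabla f(0,0)=(0,\cos\beta)$, so the level set $L_h$ is tangent to the $s$-axis at the origin of $R$. We compute $\Psi_\beta(r)=A(S_r\cap R)$ only near $r=h$. By genericity, no other vertex of $R$ or tangency contributes, so every singular behaviour comes from a neighbourhood of the origin. We may take $h=1$ by the dilation argument of Proposition \ref{lem_jump_Psi_endpoints}: $\Psi''$ is scale invariant and $\Psi'$ scales by $h$, which restores the stated powers of $h$.

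\textbf{Part (1), case $0\le|\beta|<\pi/2$.} For $r<h$ the sublevel set misses a neighbourhood of the origin, so $\Psi_\beta\equiv 0$ locally. This gives $\Psi'_\beta(h)=0$ and the left limit $0$. For $r>h$, write $t$-slices. For fixed small $t\ge0$ the admissible $s$ form the interval
\[
0\le s\le t\sin\beta+\sqrt{r^2-1-2t\cos\beta-t^2\cos^2\beta}.
\]
The range of $t$ is $0\le t\le t_0(r)$ with $t_0\approx(r^2-1)/(2\cos\beta)$. Integrating in $t$ gives leading term $\frac{2}{3}(r^2-1)^{3/2}/(2\cos\beta)$. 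Differentiating twice in $r$ yields $\Psi''_\beta(r)=\frac{1}{\cos\beta\sqrt{r^2-1}}+O(1)$, which is \eqref{Psi''_beta_down_blowup}.

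\textbf{Part (1), case $\pi/2<|\beta|\le\pi$.} Now $\cos\beta<0$, so the quadrant $R$ near the origin lies on the side where $f<h$. For $r$ near $h$ we compute $\Psi_\beta(r)=A(R\cap S_r)$ locally as the area cut from the quadrant by the ellipse. Choose coordinates so that this is an integral of $\sqrt{\cdot}$ terms and evaluate it in closed form at $r=h$. The ellipse meets the $s$-axis at $s=\pm\sqrt{r^2-1}$, and it crosses the origin region with a slope whose angle produces the $\operatorname{arccot}(\sin^2\beta/\sqrt{1-\sin^4\beta})$ term. I would get $\Psi'_\beta(h)$ and $\lim_{r\nearrow h}\Psi''_\beta$ by differentiating the explicit formula and letting $r\nearrow h$. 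Maple-style bookkeeping, or the elliptic-sector area formula $\frac{r^2}{\sqrt{\det}}\cdot\text{angle}$, suffices. The blow-up for $r\searrow h$ has the same local origin as in the first case: the extra thin strip $0\le s\lesssim\sqrt{r^2-1}$ near $t=0$. So it has the same form $h/(\cos\beta\sqrt{r^2-h^2})$.

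\textbf{Part (2).} Replacing $\beta$ by $\pm(\beta+\pi)$ means replacing $E_j$ by the opposite extension $\widetilde E_j$, as in Step 2 of Proposition \ref{lem_jump_Psi_endpoints}. The union $E_j\cup\widetilde E_j$ is a straight segment through $P'$ and is transverse to the level set at $r=h$. Hence $\Psi_{E_i,E_j\cup\widetilde E_j}=\Psi_\beta+\Psi_{\pm(\beta+\pi)}$ is $C^2$ at $h$. The right limit of the sum therefore equals its left limit. The left limit of $\Psi''_{\pm(\beta+\pi)}$ is $0$, since $|\beta+\pi|<\pi/2$ falls in the first case. This gives \eqref{Psi''_beta_down}, and the $1/\sqrt\rho$ terms cancel because $\cos(\beta+\pi)=-\cos\beta$.

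\textbf{Part (3).} Multiply by the weight $\langle\nu_i,\nu_j\rangle=\pm\cos(\a+\beta)=\mp\sin\beta$. The blow-up term $\sqrt{h/2}\,\rho^{-1/2}/\cos\beta$ then becomes $\mp\sqrt{h/2}\tan\beta\,\rho^{-1/2}$, with the sign convention of Proposition \ref{lem_jump_Psi_endpoints}.

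The main obstacle is the closed-form evaluation for $\pi/2<|\beta|\le\pi$ of $\Psi'_\beta(h)$ and the left limit of $\Psi''_\beta$, including identifying the arccot angle correctly. I would first compute the interior angle of the ellipse sector at the tangency using the affine normalization $\xi$ of Proposition \ref{lem_jump_Psi_endpoints}.
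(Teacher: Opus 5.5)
Your Part (3), the acute case of Part (1), and the smoothness argument in Part (2) are essentially right. In the acute case, differentiating a leading term twice needs control of the remainder, which the paper gets from the exact antiderivative. Your Part (2) argument is exactly the one in the Remark following the paper's proof. There are, however, two genuine gaps.

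\textbf{Obtuse case of (1).} For $\pi/2<|\beta|\le\pi$ you do not derive the stated values of $\Psi'_\beta(h)$ and $\lim_{r\nearrow h}\Psi''_\beta(r)$. The route you sketch cannot produce them: a local computation at the origin, with the arccot appearing as a tangency or sector angle after the normalization $\xi$. When $\cos\beta<0$ the set $S_h\cap R$ is macroscopic, and $\Psi'_\beta(h)=\int_{L_h\cap R}|\nabla f|^{-1}$ depends on where $E_j$ ends; only the singular part at $h$ is local.

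The stated formulas use the paper's normalization: $E_j$ is cut at $t=-h\cos\beta$, the foot of the perpendicular from $P$ to $\ell_j$, and $E_i$ is long enough. Then for $r>h$ every $t$-slice is $[0,\,t\sin\beta+\sqrt{r^2-(h+t\cos\beta)^2}\,]$. The substitution $u=h+t\cos\beta$, running from $h$ to $h\sin^2\beta$, gives
\[
\Psi_\beta'(r)=\frac{r}{\cos\beta}\Bigl(\arcsin\frac{h\sin^2\beta}{r}-\arcsin\frac hr\Bigr),\qquad \Psi_\beta'(h)=-\frac{h}{\cos\beta}\arccos(\sin^2\beta).
\]
This is the stated arccot, and it comes from the cut-off endpoint, not from an angle at the tangency. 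For $r<h$ the slices change type at $t_a=-h\cos\beta-\sqrt{r^2-h^2\sin^2\beta}$. Differentiating twice and letting $r\nearrow h$ gives the left limit, with the $\tan\beta$ term coming from the moving breakpoint $t_a$.

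Your ``same local origin'' justification of the blow-up in this case is also only heuristic, since $S_r$ now lies on the other side of $L_r$. It becomes rigorous if you run your Part (2) argument first. Smoothness of $\Psi_\beta+\Psi_{\beta\pm\pi}$ at $h$ gives continuity of $\Psi'_\beta$ there, and shows that the singular part of $\Psi''_\beta$ is minus that of the acute partner.

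\textbf{Part (2).} Only $\beta+\pi\equiv\beta-\pi$ corresponds to the opposite extension $\widetilde E_j$. The other value, $-(\beta+\pi)\equiv\pi-\beta$, is the mirror image of $\widetilde E_j$ in the line $PP'$. It does not form a straight segment with $E_j$, so your smoothness argument says nothing about it.

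This case cannot be repaired. The acute-case expansion $\Psi''_\gamma(r)=\frac{h}{\cos\gamma\sqrt{r^2-h^2}}+\frac{\sin\gamma}{\cos^2\gamma}+o(1)$ ($r\searrow h$, $|\gamma|<\pi/2$) gives $\lim_{r\searrow h}(\Psi''_{\pi-\beta}-\Psi''_{\beta-\pi})=2\sin\beta/\cos^2\beta\neq 0$. Hence the identity in (2) can hold only for the opposite extension. The paper's claim that the minus-sign case follows by direct computation conflicts with its own formulas in (1). So your argument proves the correct form of (2), but not the statement as written.
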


\begin{proof}
There are four cases as illustrated in Figure \ref{vertex-tangency}. 
\begin{figure}[htbp]
\begin{center}
\includegraphics[width=.7\linewidth]{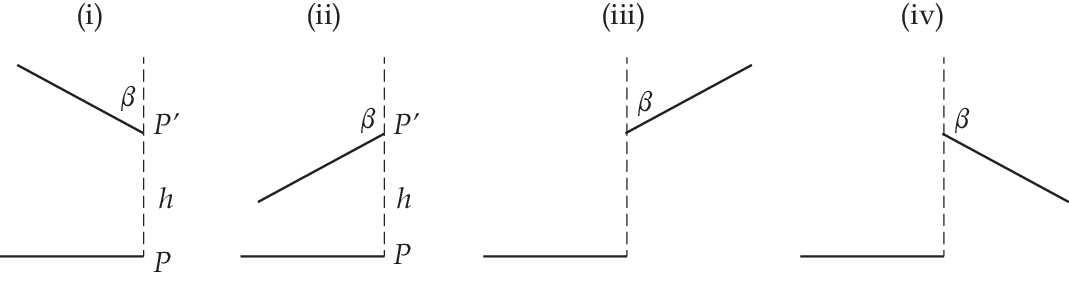}
\caption{(i) $0<\beta<\pi/2$, (ii) $\pi/2<\beta<\pi$, (iii) $-\pi/2<\beta<0$, (iv) $-\pi<\beta<-\pi/2$.}
\label{vertex-tangency}
\end{center}
\end{figure}

(1) (i) Suppose $0\le\beta<\pi/2$. Then for $r>h$
\begin{eqnarray}
\Psi_\beta(r)&=&\displaystyle \int_0^{\sqrt{r^2-h^2\sin^2\beta}-h\cos\beta}\left(\sqrt{r^2-(h+t\cos\beta)^2}+t\sin\beta\right)\d t \nonumber\\[0mm]
&&\displaystyle +\int_{\sqrt{r^2-h^2\sin^2\beta}-h\cos\beta}^{(r-h)/\cos\beta}2\sqrt{r^2-(h+t\cos\beta)^2}\,\d t \label{term_vanish_beta=0}\\[1mm]
&=&\displaystyle \frac{1}{2\cos\beta}\left[r^2\left(\pi-\arctan\left(\frac{h\sin\beta+\sqrt{r^2-h^2\sin^2\beta}\,\cot\beta}{\sqrt{r^2-h^2\sin^2\beta}-h\cos\beta}\right)-\arctan\frac{h}{\sqrt{r^2-h^2}}\right) \right.\nonumber \\[1mm] 
&&\displaystyle \phantom{\frac1{2\cos\beta}+} \left.
-h\sin\beta\left(\sqrt{r^2-h^2\sin^2\beta}-h\cos\beta\right)-h\sqrt{r^2-h^2}\,\right]. \nonumber
\end{eqnarray}
Note that \eqref{term_vanish_beta=0} i.e. the second term of the integral expression of $\Psi_\beta(r)$ vanishes when $\beta=0$ since the interval of integration degenerates to a point. 

Direct calculation shows  
\begin{eqnarray}
\Psi_\beta'(r)&=&\displaystyle \frac{r}{\cos\beta}\left(\pi-\arctan\left(\frac{h\sin\beta+\sqrt{r^2-h^2\sin^2\beta}\,\cot\beta}{\sqrt{r^2-h^2\sin^2\beta}-h\cos\beta}\right)-\arctan\frac{h}{\sqrt{r^2-h^2}}\right), \label{Psi'_II} \\[5mm]
\Psi_\beta''(r)&=&\displaystyle \frac1{\cos\beta}\left[\frac{h}{\sqrt{r^2-h^2}}+\frac{h\sin\beta}{\sqrt{r^2-h^2\sin^2\beta}} \right. \label{last_Psi''_II-1} \\[2mm]
&&\displaystyle \phantom{\frac1{\cos\beta}\left[\right]}
\left. +\left(\pi-\arctan\left(\frac{h\sin\beta+\sqrt{r^2-h^2\sin^2\beta}\,\cot\beta}{\sqrt{r^2-h^2\sin^2\beta}-h\cos\beta}\right)-\arctan\frac{h}{\sqrt{r^2-h^2}}\right)\right]. \label{last_Psi''_II}
\end{eqnarray}
As $r$ decreases to $h$, the right hand side of \eqref{Psi'_II} and the last term in the right hand side of $\Psi_\beta''(r)$, \eqref{last_Psi''_II}, tend to $0$. Therefore  
\[
\lim_{r\searrow h}\Psi_\beta'(r) =0, \quad 
\Psi_\beta''(r) =  \displaystyle \frac{h}{\cos\beta \sqrt{r^2-h^2}}+\frac{\sin\beta}{\cos^2\beta}+O\left(\sqrt{r-h}\right) \quad (r\searrow h). 
\]

(ii) Suppose $\pi/2<\beta\le\pi$. 
Let $t$ be the distance from $P$, and use it as the parameter of $E_i$. 
Since the behavior when $r$ is close to $h$ is the problem, a range of $t$ close to 0 is sufficient. Let us assume for convenience that it is up to $h \cos \beta$.

When $r<h$ 
\[
\Psi_{\beta}(r)=\int_{-(h-r)/\cos\beta}^{-h\cos\beta-\sqrt{r^2-h^2\sin^2\beta}}2\sqrt{r^2-(h+t\cos\beta)^2}\,\,\d t 
+\int_{-h\cos\beta-\sqrt{r^2-h^2\sin^2\beta}}^{-h\cos\beta}\left(\sqrt{r^2-(h+t\cos\beta)^2}+t\sin\beta\right)\d t. 
\]
Then direct calculation shows 
\[
\lim_{r\nearrow h}\Psi_{\beta}'(r)=\frac{-h}{\cos\beta}\cdot\mbox{\rm arccot}\left(\frac{\sin^2\beta}{\sqrt{1-\sin^4\beta}}\right)
\]
and \eqref{Psi''_beta_down1}.   

When $r>h$, $\Psi_{\beta}(r)$ is given by 
\begin{equation}\label{Psi_{beta}(r)_pi/2<betale_pi_r>h}
\Psi_{\beta}(r)=\int_{0}^{-h\cos\beta}\left(\sqrt{r^2-(h+t\cos\beta)^2}+t\sin\beta\right)\d t. 
\end{equation}
Direct calculation shows 
\begin{eqnarray}
\displaystyle \lim_{r\searrow h}\Psi_{\beta}'(r)&=&\displaystyle \frac{-h}{\cos\beta}\cdot\mbox{\rm arccot}\left(\frac{\sin^2\beta}{\sqrt{1-\sin^4\beta}}\right)=\lim_{r\nearrow h}\Psi_{\beta}'(r), \nonumber \\[1mm]
\displaystyle \Psi_{\beta}''(r)&=&\displaystyle \frac{-1}{\cos\beta}
\left[-\frac{h}{\sqrt{r^2-h^2}}+\frac{h\sin^2\beta}{\sqrt{r^2-h^2\sin^4\beta}}
+\arctan\left(\frac{h}{\sqrt{r^2-h^2}}\right)-\arctan\left(\frac{h\sin^2\beta}{\sqrt{r^2-h^2\sin^4\beta}}\right)
\right], \qquad{\phantom{a}} \label{Psi''_beta_big_r_big}
\end{eqnarray}
which imply \eqref{Psi'_beta} and \eqref{Psi''_beta_down_blowup}. 

\medskip
(iii) and (iv) Suppose $\beta<0$. $\Psi_\beta$ is given by

\begin{empheq}[left={\Psi_\beta(r)=\empheqlbrace}]{align} %\Psi_\beta(r)=
 \displaystyle \int_{0}^{-h\cos\beta+\sqrt{r^2-h^2\sin^2\beta}}\left(\sqrt{r^2-(h+t\cos\beta)^2}+t\sin\beta\right)\d t 
    &\quad \left(-\frac\pi2<\beta<0, \> r>h \right)  \hspace{-0.4cm} \notag\\
  \mathmakebox[\widthof{$\displaystyle \int_{0}^{-h\cos\beta+\sqrt{r^2-h^2\sin^2\beta}}\left(\sqrt{r^2-(h+t\cos\beta)^2}+t\sin\beta\right)\d t$}][l]{\displaystyle \int_{-h\cos\beta-\sqrt{r^2-h^2\sin^2\beta}}^{-h\cos\beta}\left(\sqrt{r^2-(h+t\cos\beta)^2}+t\sin\beta\right)\d t}
    &\quad \left(-\pi<\beta<-\frac\pi2, \> h\sin\beta<r<h \right),   \hspace{-0.4cm}  \notag     \\
  \mathmakebox[\widthof{$\displaystyle \int_{0}^{-h\cos\beta+\sqrt{r^2-h^2\sin^2\beta}}\left(\sqrt{r^2-(h+t\cos\beta)^2}+t\sin\beta\right)\d t$}][l]{\displaystyle \int_{0}^{-h\cos\beta}\left(\sqrt{r^2-(h+t\cos\beta)^2}+t\sin\beta\right)\d t}
    &\quad \left(-\pi<\beta<-\frac\pi2, \> h<r \right).  \label{Psi_beta(r)_-pi<beta<-pi/2}
\end{empheq}
The rest can be obtained by direct calculation. 

\smallskip
(2) Adding \eqref{last_Psi''_II-1}, \eqref{last_Psi''_II} and \eqref{Psi''_beta_big_r_big}, and comparing it with \eqref{Psi''_beta_down1} yields the case in which the composite sign is negative and $\frac\pi2<\beta<\pi$ in \eqref{Psi''_beta_down}, i.e. 
\[\lim_{r\searrow h}\left(\Psi_{\beta}''(r)+\Psi_{-(\beta+\pi)}''(r)\right)=\lim_{r\nearrow h}\Psi_{\beta}''(r) \qquad \left(\frac\pi2<\beta<\pi\right).\]
The other cases can be proved by direct computation in the same way. 

\smallskip
(3) follows from (1). 
\end{proof}

\begin{remark} \rm 
There is also a proof of (2) that requires fewer calculations. 
The case when the composite sign is positive 
holds since the left hand side 
is equal to the second derivatives of $\Psi$ of two edges illustrated in Figure \ref{vertex-tangency_union} at $r=h$, where $\Psi''$ is smooth. 
\begin{figure}[htbp]
\begin{center}
\includegraphics[width=.4\linewidth]{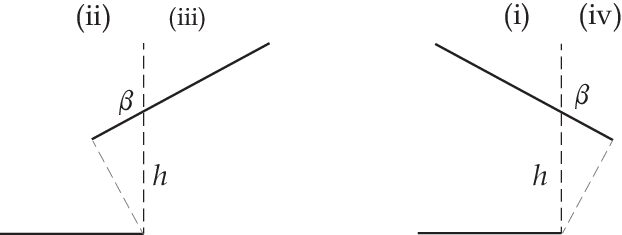}
\caption{$\pi/2<\beta<\pi$ left, and $-\pi<\beta<-\pi/2$ right}
\label{vertex-tangency_union}
\end{center}
\end{figure}
Therefore, 
\[
\lim_{r\searrow h}\left(\Psi_{\beta}''(r)+\Psi_{\beta+\pi}''(r)\right)
=\lim_{r\nearrow h}\left(\Psi_{\beta}''(r)+\Psi_{\beta+\pi}''(r)\right)
=\lim_{r\nearrow h}\Psi_{\beta}''(r) \qquad \left(\frac\pi2<\beta<\pi\right).\]

The case when the composite sign is negative can be reduced to the previous case since 
\eqref{Psi_{beta}(r)_pi/2<betale_pi_r>h} and \eqref{Psi_beta(r)_-pi<beta<-pi/2} show that 
\[\lim_{r\searrow h}\left(\Psi_{\beta}''(r)-\Psi_{-\beta}''(r)\right)
=0 \qquad \left(\frac\pi2<\beta<\pi\right).\]
\end{remark}

\begin{corollary}\label{cor_Psi_Ei-1_Ei_Ei+1_pi/2_beta}
Consider three consecutive sides $E_{i-1}, E_i$ and $E_{i+1}$. 
Assume $a_i$  
appears only once in the sub-multiset of $\mathcal{C}$ obtained from $E_{i-1}\cup E_i\cup E_{i+1}$. 
Assume $\t_{i-1}=\pi/2$ and $\beta:=\t_i\ne\pm\pi/2$. 
Then the blowing-up of $\Psi_{\nu;\,E_{i-1}\cup E_i\cup E_{i+1}}''(r)$ as $r$ decreases to $a_i$ is given by 
\begin{eqnarray}
\displaystyle \Psi_{\nu;\,E_{i-1}\cup E_i\cup E_{i+1}}''(r) &\stackrel{r\searrow a_i}{\sim}& \displaystyle  
-\sqrt{2a_i}\cdot{\tan\beta}\cdot\frac1{\sqrt\rho}+O(1) \hspace{0.5cm}\qquad (\rho=r-a_i). \label{blow-up_pi/2_beta_nu}
\end{eqnarray}
\end{corollary}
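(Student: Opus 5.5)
Decompose $\Psi_{\nu;\,E_{i-1}\cup E_i\cup E_{i+1}}$ into pair contributions, as in the proof of Corollary \ref{cor_jumps_to_angles}:
\[
\Psi_{\nu;\,i,i}+2\left(\Psi_{\nu;\,i-1,i+1}+\Psi_{\nu;\,i,i-1}+\Psi_{\nu;\,i,i+1}\right)+\Psi_{\nu;\,i-1,i-1}+\Psi_{\nu;\,i+1,i+1}.
\]
Since $a_i$ occurs only once in the relevant sub-multiset of $\mathcal{C}$, the self terms $\Psi_{\nu;\,i\pm1,i\pm1}$ are smooth near $a_i$. The terms $\Psi_{\nu;\,i,i}$, $\Psi_{\nu;\,i,i\pm1}$ are either smooth or have only finite jumps in the second derivative; this follows from \eqref{Psi_single} and from Corollary \ref{cor_jump_adjacent} with the extension trick. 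So none of these produces a $\rho^{-1/2}$ singularity.

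The one exception to check is the adjacent pair $E_{i-1},E_i$, since $\t_{i-1}=\pi/2$ and the exterior angle $0$ of $\widetilde E_i$ is not of tangency type. The extended edge $\widetilde E_i$ makes the angle $\pi/2$ with $E_{i-1}$ at the vertex $P_{i-1}$, not at the far point $P_i$, so the level set at distance $a_i$ meets the corner $(0,a_i)$ transversally. Hence only finite jumps occur, and $\Psi_{\nu;\,i,i-1}''$ stays bounded near $a_i$. The plan is therefore to show that all blow-up comes from the single pair $(E_{i-1},E_{i+1})$.

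For that pair, take $P=P_{i-1}$ and $P'=P_i$ with $PP'=a_i=h$. The exterior angle at $P$ is $\a=\t_{i-1}=\pi/2$ and at $P'$ is $\beta=\t_i\neq\pm\pi/2$, so $E_{i-1}\perp E_i$. This is exactly the vertex-tangency situation of Lemma \ref{lemma_tangency_vertex}. Part (3) of that lemma gives
\[
\Psi_{\nu;\,E_{i-1},E_{i+1}}''(r)\sim \mp\sqrt{\frac{a_i}{2}}\,\tan\beta\,\frac1{\sqrt\rho}+O(1).
\]
Multiplying by the factor $2$ from the symmetric decomposition gives $\mp\sqrt{2a_i}\,\tan\beta\,\rho^{-1/2}$.

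It remains to fix the sign. For three consecutive sides oriented counterclockwise, $E_{i-1}$ runs from $P_{i-2}$ to $P_{i-1}=P$, i.e. into $P$, and $E_{i+1}$ runs from $P_i=P'$ away. Relative to $\overrightarrow{PP'}$, both sides match the orientation, as in the convention of Proposition \ref{lem_jump_Psi_endpoints}, where $\langle\nu_{i-1},\nu_{i+1}\rangle=\cos(\t_{i-1}+\t_i)$ with a $+$ sign. So the sign in Lemma \ref{lemma_tangency_vertex}(3) is negative, and \eqref{blow-up_pi/2_beta_nu} follows.

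The main obstacle is bookkeeping. First, the orientation and sign conventions in Lemma \ref{lemma_tangency_vertex}(3) must agree with the parametrization of Figure \ref{generic_endpoints} used there; I would check this against $\langle\nu_{i-1},\nu_{i+1}\rangle=-\sin\beta$ directly. Second, one must confirm that the adjacent pairs contribute no $\rho^{-1/2}$ term, which is the transversality check above.
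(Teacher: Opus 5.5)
Your decomposition, the use of Lemma \ref{lemma_tangency_vertex}(3) for the non-adjacent pair $(E_{i-1},E_{i+1})$ (with $h=a_i$, $\alpha=\theta_{i-1}=\pi/2$, $\beta=\theta_i$), the doubling and the sign check via $\langle\nu_{i-1},\nu_{i+1}\rangle=\cos(\theta_{i-1}+\theta_i)=-\sin\beta$ are all correct. This is what the paper means by ``in the same way as Corollary \ref{cor_jumps_to_angles}''. The pair $(E_i,E_{i+1})$ is also fine, since Corollary \ref{cor_jump_adjacent} applies when $\theta_i\neq\pm\pi/2$.

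The gap is in your treatment of the adjacent pair $(E_{i-1},E_i)$, where the transversality claim is false. Parametrize both sides by the distances $s,u$ from the common vertex $P_{i-1}$. The squared distance is then $s^2+u^2+2su\cos\theta_{i-1}=s^2+u^2$, and the circle $s^2+u^2=a_i^2$ is \emph{tangent} to the edge $\{u=a_i\}$ of the parameter rectangle at the corner $(0,a_i)$. Geometrically, the foot of the perpendicular from $P_i$ to $\ell_{i-1}$ is exactly the endpoint $P_{i-1}$. This is the vertex-tangency case of Lemma \ref{lemma_tangency_vertex} with $(\alpha,\beta)=(\pi/2,\pi)$. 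If you extend $E_i$ beyond $P_i$, the new pair has $(\alpha,\beta)=(\pi/2,0)$, and this is still tangency: tangency is caused by $\alpha=\pm\pi/2$, not by $\beta$, as the paper remarks after Proposition \ref{lem_jump_Psi_endpoints}. Concretely, if $a_{i-1}>a_i$, then for $r>a_i$ near $a_i$ one gets $\Psi''_{E_{i-1},E_i}(r)=\arcsin(a_i/r)-a_i/\sqrt{r^2-a_i^2}\sim-\sqrt{a_i/2}\,\rho^{-1/2}$. So the unweighted pair does \emph{not} have only finite jumps. Nor can Corollary \ref{cor_jump_adjacent} be invoked here, because it would need $J^{(p)}_\nu(\pi/2,0)$, which lies outside the hypotheses of Proposition \ref{lem_jump_Psi_endpoints}.

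Your conclusion for this pair is still true, but for a different reason. The weight is $\langle\nu_{i-1},\nu_i\rangle=\cos\theta_{i-1}=0$, so $\Psi_{\nu;\,E_{i-1},E_i}\equiv 0$. Equivalently, Lemma \ref{lemma_tangency_vertex}(3) gives the coefficient $\tan\pi=0$, or $\tan 0=0$ after extension. Replace the transversality argument with this observation and the proof closes. The self terms of $E_{i\pm1}$ are smooth near $a_i$, and $\Psi_{\nu;\,i,i}$ is handled by \eqref{Psi_single}. The only $\rho^{-1/2}$ term is then $2\cdot\bigl(-\sqrt{a_i/2}\,\tan\beta\bigr)\rho^{-1/2}=-\sqrt{2a_i}\,\tan\beta\,\rho^{-1/2}$.
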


\begin{proof}
It can be proved in the same way as Corollary \ref{cor_jumps_to_angles}. 
\end{proof}
We remark that if $(\t_{i-1},\t_i)=(0,\beta)$ $(|\beta|\ne\pi/2)$ then there are no blowing-up terms, i.e. the jumps of $\Psi_{(\nu;)\,E_{i-1}\cup E_i\cup E_{i+1}}''(r)$ at $a_i$ are finite. 

\smallskip
Lastly we consider the case when the level set $L_r$ is tangent to the boundary of $R=[0,a_i]\times[0,a_j]$ at a non-vertex point, namely when $r$ is an interior height (Definition \ref{multiset}).

\begin{corollary}\label{lem_interior_distance} 
Suppose $E_i$ and $E_j$ are not parallel and $H_{ji}=\mbox{pr}_i(P_j)$ is in the interior of $E_i$. 
Let $h=P_jH_{ji}$ and $\beta$ be the angle of $\overrightarrow{P_jP_{j-1}}$ from $\overrightarrow{H_{ji}P_j}$ $(-\pi<\beta\le\pi, \, \beta\ne\pm\pi/2)$. 
\begin{figure}[htbp]
\begin{center}
\includegraphics[width=.2\linewidth]{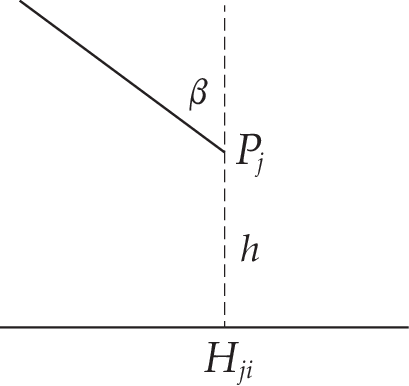}
\caption{}
\label{generic_interior}
\end{center}
\end{figure}
If $\beta\ne0,\pi$, i.e. if $E_j$ is not orthogonal to $E_i$, then $\Psi_{\nu;\,E_i,E_j}''(r)$ blows up to $\pm\infty$ as $r$ decreases to $h$ with the blowing-up term given by 
\begin{equation}\label{blow-up_interior_height_pi/2_beta}
\Psi_{\nu;\,E_i,E_j}''(r)\stackrel{r\searrow h}{\sim} 
\mp\sqrt{2h}\cdot{\tan\beta}\cdot\frac1{\sqrt\rho}+O(1) \qquad (\rho=r-h).
\end{equation}
The sign on the right-hand side is given by 
\[
-\mathrm{sgn}\big(E_i,\overrightarrow{H_{ji}P_j}\,\big)\cdot \mathrm{sgn}\big(\overrightarrow{H_{ji}P_j}, E_j\big),
\]
where 
\[
\begin{array}{rcl}
\mathrm{sgn}\big(E_i,\overrightarrow{H_{ji}P_j}\,\big)
&=&\displaystyle \left\{\begin{array}{ll}
+1 & \quad \mbox{if the orientation of $\overrightarrow{H_{ji}P_j}$ is equal to $\pi/2$-rotation of that of $E_i$}, \\
-1 & \quad \mbox{otherwise}
\end{array}
\right. \\[4mm]
\mathrm{sgn}\big(\overrightarrow{H_{ji}P_j}, E_j\big)
&=&\displaystyle \left\{\begin{array}{ll}
+1 & \quad \mbox{if the two orientations of $\overrightarrow{H_{ji}P_j}$ and $E_j$ are compatible at $P_j$}, \\
-1 & \quad \mbox{if both are directed toward $P_j$ or both away from $P_j$}
\end{array}
\right. 
\end{array}
\]
\end{corollary}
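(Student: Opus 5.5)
The plan is to reduce the interior-height situation to the vertex-tangency computation of Lemma \ref{lemma_tangency_vertex}. Cut $E_i$ at the foot $H=H_{ji}$ into two segments $E_i'$ and $E_i''$, so that $\Psi_{\nu;E_i,E_j}=\Psi_{\nu;E_i',E_j}+\Psi_{\nu;E_i'',E_j}$. For each piece, $H$ is an endpoint, and the exterior angle at $H$ of the configuration (piece, segment $[HP_j]$, $E_j$) equals $\pm\pi/2$. The angle at $P_j$ is $\beta$ for one piece. For the other piece, the roles of orientation and angle are reflected, so the angle becomes $-\beta$ with the opposite orientation sign, or equivalently $\pm(\beta+\pi)$ after reflecting the picture.

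Since $H$ is interior to $E_i$ and $h$ is assumed to appear only at this configuration, the remaining endpoints $P_{i-1},P_i$ and $P_{j-1}$ contribute only smooth terms near $r=h$. Hence the singular part near $r=h$ comes only from neighbourhoods of $(H,P_j)$ in the two pieces.

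Step 1: apply Lemma \ref{lemma_tangency_vertex} (1), formula \eqref{Psi''_beta_down_blowup}, to each piece with $\a=\pi/2$ (or $-\pi/2$, by reflection symmetry, which leaves the unweighted $\Psi$ unchanged). Each piece contributes a blow-up $\sqrt{h/2}\,(1/\cos\beta)\,\rho^{-1/2}$ to the unweighted $\Psi''$, with equal sign for both pieces. This is the geometrically expected behaviour: the unweighted density near $H$ is symmetric on the two sides of the foot. The unweighted blow-ups therefore add rather than cancel.

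Step 2: insert the weights. On both pieces $\langle\nu_x,\nu_y\rangle$ equals the same constant $\pm\sin\beta$, because both pieces lie on the single side $E_i$ with a common normal and the normal of $E_j$ makes angle $\pi/2\pm\beta$ with it. Multiplying gives $\pm\sin\beta/\cos\beta=\pm\tan\beta$. Summing the two equal contributions turns the factor $\sqrt{h/2}$ into $2\sqrt{h/2}=\sqrt{2h}$, matching \eqref{blow-up_interior_height_pi/2_beta} and consistent with the factor in Lemma \ref{lemma_tangency_vertex} (3).

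The condition $\beta\neq0,\pi$ is exactly $\tan\beta\neq0$, so the blow-up is genuinely $\pm\infty$. In the orthogonal case the weight vanishes and the singularity disappears.

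The main obstacle is bookkeeping the sign. I would fix $E_i$ horizontal and oriented to the right, with $P_j$ above it, so that $\mathrm{sgn}(E_i,\overrightarrow{H P_j})=+1$. I would then compute $\nu_i=(0,-1)$ and $\nu_j$ explicitly for $E_j$ oriented toward or away from $P_j$, and read off $\langle\nu_i,\nu_j\rangle$ as a signed multiple of $\sin\beta$. Finally, I would check that a reversal of either orientation flips the product sign, which yields $-\mathrm{sgn}(E_i,\overrightarrow{HP_j})\,\mathrm{sgn}(\overrightarrow{HP_j},E_j)$.

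I would double-check the cases $|\beta|>\pi/2$, where the relevant piece of the computation in Lemma \ref{lemma_tangency_vertex} (1)(ii),(iv) applies. The blow-up coefficient $1/\cos\beta$ keeps the same form there, so the final formula is uniform in $\beta$.
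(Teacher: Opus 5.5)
Your proposal is correct and is essentially the paper's argument. The paper reruns the computation of Lemma \ref{lemma_tangency_vertex} (1) with the half-chord integrand $\sqrt{r^2-(h+t\cos\beta)^2}+t\sin\beta$ replaced by the full chord $2\sqrt{r^2-(h+t\cos\beta)^2}$. That full chord is exactly the sum of your two pieces of $E_i$ cut at $H_{ji}$, since the $\pm t\sin\beta$ offsets cancel, so the unweighted blow-up doubles to $\sqrt{2h}/\cos\beta$ before the constant weight $\pm\sin\beta$ is applied.

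There is one slip to fix. The reflected piece corresponds to the angle $-\beta$, not ``equivalently $\pm(\beta+\pi)$''. The latter flips the sign of $\cos\beta$ and would give the cancellation of Lemma \ref{lemma_tangency_vertex} (2), not the addition you correctly use in Step 1.
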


\begin{proof} 
It is a consequence of a similar statement for $\Psi_{E_i,E_j}''(r)$, which follows from the same calculation as in Lemma \ref{lemma_tangency_vertex} (1) with $\left(\sqrt{r^2-(h+t\cos\beta)^2}+t\sin\beta\right)$ in the integrand being replaced by  $2\sqrt{r^2-(h+t\cos\beta)^2}$. 
Since the second derivative of the integral of $t\sin\beta$ does not diverge, the blowing up term of $\Psi_{E_i,E_j}''(r)$ at $r=h$ is twice as large as in Lemma \ref{lemma_tangency_vertex} (1). 
\end{proof}

\begin{corollary}\label{lem_interior_distance_adjacent_sides} 
Suppose neither $E_j$ nor $E_{j+1}$ is parallel to $E_i$ and $H_{ji}=\mbox{pr}_i(P_j)$ is in the interior of $E_i$. 
Let $h=P_jH_{ji}$, and $\beta$ and $\beta'$ be the angles of $P_jP_{j-1}$ and $P_jP_{j+1}$ from $H_{ji}P_j$ respectively. 
Then $\Psi_{\nu;\,E_i,E_j\cup E_{j+1}}''(r)$ blows up to $\pm\infty$ with 
\begin{equation}\label{blow-up_interior_height_beta_beta'}
\Psi_{\nu;\,E_i,E_j\cup E_{j+1}}''(r)\stackrel{r\searrow h}{\sim} 
\mp\sqrt{2h}\,\big(\tan\beta-\tan\beta'\big)\,\frac1{\sqrt\rho}+O(1) \qquad (\rho=r-h),
\end{equation}
where the sign in the right hand side is the same as in the previous corollary. 
\end{corollary}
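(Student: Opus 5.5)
The plan is to use additivity of $\Psi_{\nu;\,Y,W}$ in the second argument,
\[
\Psi_{\nu;\,E_i,E_j\cup E_{j+1}}=\Psi_{\nu;\,E_i,E_j}+\Psi_{\nu;\,E_i,E_{j+1}},
\]
since $E_j\cap E_{j+1}=\{P_j\}$ has measure zero. We then apply Corollary \ref{lem_interior_distance} to each summand and add the two blowing-up terms. Both $E_j$ and $E_{j+1}$ have the endpoint $P_j$, whose foot $H_{ji}$ lies in $\mbox{Int}\,E_i$. Neither side is parallel to $E_i$, so neither angle $\beta,\beta'$ equals $\pm\pi/2$. Hence the hypotheses of Corollary \ref{lem_interior_distance} hold for both pairs $(E_i,E_j)$ and $(E_i,E_{j+1})$, with angles $\beta$ and $\beta'$ respectively. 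If one of the sides is orthogonal to $E_i$ (angle $0$ or $\pi$), its contribution has no $\rho^{-1/2}$ term. The computation in Lemma \ref{lemma_tangency_vertex} (1) with $\beta=0$ shows this, and it is consistent with $\tan\beta=0$, so the formula still reads correctly in that case.

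The key step is the sign bookkeeping. Corollary \ref{lem_interior_distance} gives each blow-up with sign
\[
-\mathrm{sgn}\big(E_i,\overrightarrow{H_{ji}P_j}\big)\cdot\mathrm{sgn}\big(\overrightarrow{H_{ji}P_j},E_j\big).
\]
The first factor is common to both summands. For the second factor, the boundary orientation of $E_j$ runs toward $P_j$, while that of $E_{j+1}$ runs away from $P_j$. Thus $\mathrm{sgn}(\overrightarrow{H_{ji}P_j},E_{j+1})=-\mathrm{sgn}(\overrightarrow{H_{ji}P_j},E_j)$. Adding
\[
\mp\sqrt{2h}\tan\beta\,\rho^{-1/2}\quad\text{and}\quad \pm\sqrt{2h}\tan\beta'\,\rho^{-1/2}
\]
gives $\mp\sqrt{2h}(\tan\beta-\tan\beta')\rho^{-1/2}+O(1)$, with the sign as in the previous corollary for $E_j$.

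The main obstacle is checking that $\beta$ and $\beta'$ are the correct angle parameters in Corollary \ref{lem_interior_distance}. That corollary measures the angle of $\overrightarrow{P_jP_{j-1}}$, the direction along $E_j$ pointing away from $P_j$. For $E_{j+1}$ the analogous direction away from $P_j$ is $\overrightarrow{P_jP_{j+1}}$, which is exactly how $\beta'$ is defined. The local computation in Lemma \ref{lemma_tangency_vertex} depends only on the segment emanating from $P_j$ and not on the boundary orientation, so the formula for the unweighted $\Psi''$ applies verbatim. The orientation enters only through $\langle\nu_x,\nu_y\rangle$, that is, through the sign factor treated above. I would confirm this by redoing the integral parametrizing $E_{j+1}$ by distance $t$ from $P_j$.

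We also need that the remaining contributions are $O(1)$. At $r=h$ the level set touches $\partial R$ tangentially only at the vertex-foot configuration considered, and any coincident critical lengths from other endpoints produce at most finite jumps, by Proposition \ref{lem_jump_Psi_endpoints}. These are absorbed into $O(1)$, which completes the argument.
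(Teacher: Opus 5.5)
Your argument is the paper's argument. You split $\Psi_{\nu;\,E_i,E_j\cup E_{j+1}}$ into the two pair contributions, apply Corollary~\ref{lem_interior_distance} to each, and use $\mathrm{sgn}(\overrightarrow{H_{ji}P_j},E_{j+1})=-\mathrm{sgn}(\overrightarrow{H_{ji}P_j},E_j)$. Your sign bookkeeping is correct, and so is your identification of $\beta'$ as the right angle parameter for $E_{j+1}$.

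What is missing is the step that the statement actually asserts. It claims that $\Psi''_{\nu;\,E_i,E_j\cup E_{j+1}}$ \emph{blows up} to $\pm\infty$, so you must show that the coefficient $\tan\beta-\tan\beta'$ is nonzero, and you never do. The paper's proof supplies this in its last line. Since $\beta,\beta'\ne\pm\pi/2$, the equality $\tan\beta=\tan\beta'$ would force $\beta'\equiv\beta \pmod{\pi}$. That means $P_{j-1},P_j,P_{j+1}$ are collinear, so either $\theta_j=0$ (excluded by the standing assumption on interior angles) or $E_j$ and $E_{j+1}$ overlap (excluded by simplicity of $\Gamma$). This nonvanishing is exactly what Proposition~\ref{from_Psi_to_polygon} later relies on. You also need it in your orthogonal case: if $\tan\beta=0$, you still need $\tan\beta'\ne0$, and that follows from the same collinearity argument.

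There are two smaller issues.
\begin{itemize}
\item Your justification for the orthogonal case points to the wrong thing. Lemma~\ref{lemma_tangency_vertex}~(1) with $\beta=0$ concerns the unweighted $\Psi$, and its second derivative does blow up, since the term $h/\sqrt{r^2-h^2}$ survives. The $\nu$-weighted contribution of a side orthogonal to $E_i$ has no $\rho^{-1/2}$ term because $\langle\nu_x,\nu_y\rangle\equiv 0$, which is the factor $\tan\beta$ in part~(3).
\item Your final paragraph overclaims. Suppose another critical configuration of these pairs also had length $h$, for instance another interior height, or a vertex distance with a right exterior angle. It would add its own $\rho^{-1/2}$ term, not an $O(1)$ jump. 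Proposition~\ref{lem_jump_Psi_endpoints} cannot rule this out, since it assumes $\alpha,\beta\ne\pm\pi/2$ and a single occurrence of $b$. Such coincidences have to be excluded by hypothesis, as the paper tacitly does, rather than argued away.
\end{itemize}
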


\begin{proof}
Note that the signs of the first term of the right hand side of \eqref{blow-up_interior_height_pi/2_beta} for $\Psi_{\nu;\,E_i,E_j}''$ and $\Psi_{\nu;\,E_i,E_{j+1}}''$ are opposite since 
\[
\mathrm{sgn}\big(\overrightarrow{H_{ji}P_j}, E_j\big)
=-\mathrm{sgn}\big(\overrightarrow{H_{ji}P_j}, E_{j+1}\big).
\]
Then \eqref{blow-up_interior_height_pi/2_beta} implies \eqref{blow-up_interior_height_beta_beta'}. 

Since $\t_j\ne0$ and $\Ga$ is simple 
by our assumption, $\beta'\ne \beta, \beta\pm\pi$, which implies that $\tan\beta-\tan\beta'\ne0$. 
\end{proof}

%!!!!!!!!!!!!!!!!!!!!!!!!!!!!!!!!!!!!!%%%%%%%%%%%%%%%%%%%%%%%%%%%%%%%%%%%%
\subsection{Contribution of parallel sides}\label{subsec_parallel}
%!!!!!!!!!!!!!!!!!!!!!!!!!!!!!!!!!!!!!%%%%%%%%%%%%%%%%%%%%%%%%%%%%%%%%%%%%
%
\begin{lemma}\label{lem_parallel}
Suppose $E_i$ and $E_j$ are parallel. 
Let $h$ be the distance between the lines containing them, 
and $l$ $(l\ge0)$ be the length of $E_i\cap \mbox{\rm pr}_{i}(E_j)$, which is equal to the length of $E_j\cap \mbox{\rm pr}_{j}(E_i)$. 
If $E_i$ and $E_j$ have opposite orientation then 
\begin{equation}\label{Psi''_nu_blowing-up_parallel}
\Psi_{\nu;\,E_i,E_{j}}''(r)\stackrel{r\searrow h}{\sim} 
\frac{l\sqrt h}{\sqrt2}\cdot\frac1{\rho^{3/2}}-\frac{3\sqrt2\,l}{8\sqrt{h}}\cdot\frac1{\sqrt\rho}+O(1)
\qquad (\rho=r-h). 
\end{equation}
If $E_i$ and $E_j$ have the same orientation, the right hand side of \eqref{Psi''_nu_blowing-up_parallel} has the opposite sign. 
\end{lemma}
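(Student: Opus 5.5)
We compute $\Psi_{E_i,E_j}$ explicitly near $r=h$ and then multiply by the constant weight $\langle\nu_i,\nu_j\rangle=\pm1$.

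\emph{Reduction to a band area.} Parametrize $E_i$ and $E_j$ by arclength coordinates $s$ and $t$, both measured along the common direction of $\ell_i\parallel\ell_j$, with origins chosen so that $\mathrm{pr}_i$ identifies equal coordinates. For $X\in E_i$ and $Y\in E_j$ we then have $|X-Y|^2=h^2+(s-t)^2$. Hence $\Psi_{E_i,E_j}(r)=0$ for $r<h$. For $r\ge h$, put
\[
w=\sqrt{r^2-h^2}.
\]
Then $\Psi_{E_i,E_j}(r)=A(w)$, where $A(w)$ is the area of the band $\{|s-t|\le w\}$ intersected with the rectangle $I_i\times I_j$ of parameter intervals. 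The key claim is that for $0\le w<w_0$, with $w_0>0$ small,
\[
A(w)=2lw+Kw^2
\]
for some constant $K$ depending only on the endpoint configuration. To see this, note that $A$ is piecewise polynomial of degree at most $2$ in $w$. Its breakpoints occur only when the lines $s-t=\pm w$ pass through a corner of the rectangle, which happens at the positive values $w=|\text{corner coordinate differences}|$. So on a small interval $(0,w_0)$ it is a single quadratic.

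\emph{The coefficients of $A$.} We have $A(0)=0$. The derivative $A'(0^+)$ is twice the length of the diagonal segment $\{s=t\}\cap(I_i\times I_j)$ measured in the $s$-coordinate, which is exactly $2l$, where $l$ is the overlap of $I_i$ and $I_j$. If $l=0$ the overlap is empty or a point, and the singular terms vanish trivially. The coefficient $K$ plays no role, since $Kw^2=K(r^2-h^2)$ is smooth in $r$.

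\emph{The singular expansion.} Therefore, for $r>h$ close to $h$,
\[
\Psi_{E_i,E_j}''(r)=2l\,\frac{d^2}{dr^2}\sqrt{r^2-h^2}+O(1)=-\frac{2lh^2}{(r^2-h^2)^{3/2}}+O(1).
\]
Substitute $r^2-h^2=2h\rho(1+\rho/2h)$ and expand $(1+\rho/2h)^{-3/2}=1-\frac{3\rho}{4h}+O(\rho^2)$. This gives
\[
\Psi_{E_i,E_j}''(r)=-\frac{l\sqrt h}{\sqrt2}\,\rho^{-3/2}+\frac{3\sqrt2\,l}{8\sqrt h}\,\rho^{-1/2}+O(1).
\]
Finally, $\nu$ is constant on each side. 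If $E_i$ and $E_j$ have opposite orientation, then $\nu_i=-\nu_j$, so $\langle\nu_i,\nu_j\rangle=-1$ and $\Psi_{\nu;\,E_i,E_j}=-\Psi_{E_i,E_j}$; this yields \eqref{Psi''_nu_blowing-up_parallel}. If they have the same orientation, then $\nu_i=\nu_j$, the weight is $+1$, and the sign flips.

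\emph{Main obstacle.} The step needing the most care is the first one: verifying that no corner of $I_i\times I_j$ produces a breakpoint of $A$ at $w=0$ in a way that changes the linear coefficient. This includes degenerate cases where endpoints of $E_i$ and $\mathrm{pr}_i(E_j)$ coincide. The plan is to handle it by writing $A(w)$ directly as an integral, over the overlap interval and the adjacent non-overlap pieces, of the length of $\{t\in I_j:|s-t|\le w\}$. Coincident endpoints only change the quadratic coefficient $K$, not the coefficient $2l$.
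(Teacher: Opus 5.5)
Your proposal is correct and follows essentially the paper's proof. The paper likewise writes $\Psi_{E_i,E_j}(r)=2l\sqrt{r^2-h^2}+K(r^2-h^2)$ near $r=h$, listing $K=-1,-\tfrac12,0$ according to how many endpoint projections coincide and treating the $l=0$ cases separately, then differentiates twice, expands in $\rho$, and multiplies by $\langle\nu_i,\nu_j\rangle=\mp1$. Your only deviation is that you replace this case enumeration with the observation that the band area is a single quadratic in $w$ on $(0,w_0)$ with linear coefficient $2l$, which correctly shows that the value of $K$ does not affect the singular terms.
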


\begin{proof}
Let $\de$ be the distance between $E_i$ and $E_j$ when $E_i\cap \mbox{\rm pr}_{i}(E_j)=\emptyset$. 
Then, the behavior of the first and second derivatives of $\Psi_{E_i,E_{j}}(r)$ near $h$ (or $\de$) is given as follows. 
\[
\begin{array}{lll}
\displaystyle J^{(1)}_{i,j}(h) 
=+\infty, & \quad \displaystyle J^{(2)}_{i,j}(h) 
=-\infty & \quad \mbox{ if } \>\> l>0,\\[3mm]
\displaystyle J^{(1)}_{i,j}(h) 
=h, & \quad \displaystyle J^{(2)}_{i,j}(h) 
=1 & \quad \mbox{ if } \>\> E_i\cap \mbox{\rm pr}_{i}(E_j) \, \mbox{ is a singleton}, \\[3mm]
\displaystyle J^{(1)}_{i,j}(\de) 
=0, & \quad \displaystyle J^{(2)}_{i,j}(\de)
=\frac{\de^2}{d^2} & \quad \mbox{ if } \>\> E_i\cap \mbox{\rm pr}_{i}(E_j)=\emptyset, \> \mbox{ where } \>  d=\sqrt{\de^2-h^2} \quad\mbox{(cf. Figure \ref{parallel} (v))}. 
\end{array}
\]
When $r$ is close to $h$ (or $\de$) ($r\ge h$ or $r\ge \de$), 
\begin{equation}\label{Psi_parallel_sides}
\Psi_{E_i,E_{j}}(r)=\left\{\begin{array}{ll}
2l\sqrt{r^2-h^2}-(r^2-h^2) &\quad \mbox{ if } \>\> l>0, \> \mbox{\rm pr}_{i}(\{P_{j-1},P_j\})=\{P_{i-1},P_i\},\\[1mm]
\displaystyle 2l\sqrt{r^2-h^2}-\frac12(r^2-h^2)  &\quad \mbox{ if } \>\> l>0, \> \#\left(\mbox{\rm pr}_{i}(\{P_{j-1},P_j\})\cap\{P_{i-1},P_i\}\right)=1,\\[1mm]
\displaystyle 2l\sqrt{r^2-h^2}  &\quad \mbox{ if } \>\> l>0, \> \mbox{\rm pr}_{i}(\{P_{j-1},P_j\})\cap\{P_{i-1},P_i\}=\emptyset,\\[1mm]
\displaystyle \frac12(r^2-h^2)  &\quad \mbox{ if } \>\> E_i\cap \mbox{\rm pr}_{i}(E_j) \, \mbox{ is a singleton}, 
\\[3mm]
\displaystyle \frac12\left(\sqrt{r^2-h^2}-d\right)^2 &\quad \mbox{ if } \>\> E_i\cap \mbox{\rm pr}_{i}(E_j)=\emptyset, 
\end{array}
\right.
\end{equation}
which implies 
\[
\Psi_{E_i,E_{j}}''(r)\stackrel{r\searrow h}{\sim} 
-\frac{l\sqrt h}{\sqrt2}\cdot\frac1{\rho^{3/2}}+\frac{3\sqrt2\,l}{8\sqrt{h}}\cdot\frac1{\sqrt\rho}+O(1)
\qquad (\rho=r-h). 
\]
Note that each case above corresponds to {\rm (i) - (v)} in Figure \ref{parallel} in that order. 
\begin{figure}[htbp]
\begin{center}
\includegraphics[width=.9\linewidth]{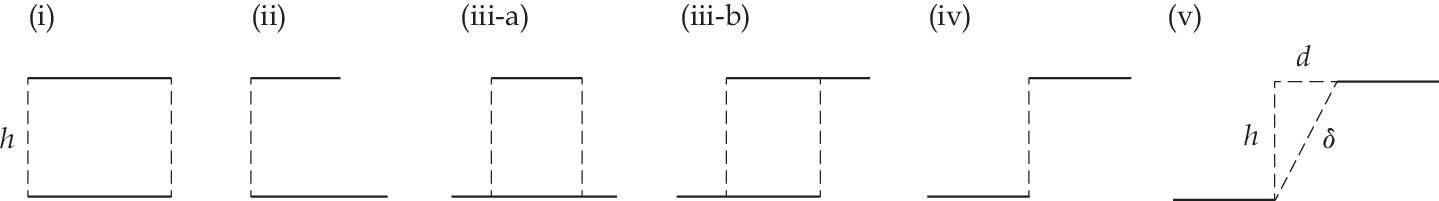}
\caption{$E_i$ and $E_j$. The central two figures correspond to the third case.}
\label{parallel}
\end{center}
\end{figure}

\end{proof}

Note that the results in the last case (v) fit with \eqref{jump}. 

\begin{remark}\label{remark_interchange_order}\rm 
The above example shows that we cannot interchange the order of differentiation and taking the limits when we deal with $\Psi_{E_i,E_{j}}$. 
In fact, let $V_d(r)$ be $\Psi_{E_i,E_{j}}(r)$ in the last case above. 
Then, putting $d=0$ we obtain $\Psi_{E_i,E_{j}}(r)$ in the fourth case, which we denote by $V_0(r)$. 
We have  
\[
\lim_{d\searrow0}V_d''(\de)=+\infty\ne 1=V_0''(h)=V_0''\big(\lim_{d\searrow0}\de\big). 
\]
\end{remark}

\begin{corollary}\label{cor_Psi_Ei-1_Ei_Ei+1_pi/2_pi/2}
Consider three consecutive sides $E_{i-1}, E_i$ and $E_{i+1}$. 
Assume $\t_{i-1}=\t_i=\pi/2$ or $\t_{i-1}=\t_i=-\pi/2$. 
Assume $a_i$ appears only once in the sub-multiset of $\mathcal{C}$ obtained from $E_{i-1}\cup E_i\cup E_{i+1}$. 
Put $l=\min\{a_{i-1},a_{i+1}\}$. 
Then the first dominating term of the blowing-up of $\Psi_{\nu;\,E_{i-1}\cup E_i\cup E_{i+1}}''(r)$ as $r$ decreases to $a_i$ can be expressed as %by series in $\rho=r-a_i$ as 
\begin{eqnarray}
\Psi_{\nu;\,E_{i-1}\cup E_i\cup E_{i+1}}''(r)&\stackrel{r\searrow a_i}{\sim}& 
l{\sqrt{2a_i}}\,\cdot\frac1{\rho^{3/2}}+O\left(\frac1{\sqrt\rho}\right) \qquad (\rho=r-a_i). 
\label{Psi''_blowing-up_parallel_three_sides_i-1_i-i+1}
\end{eqnarray}
\end{corollary}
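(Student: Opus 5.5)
The plan is to decompose $\Psi_{\nu;\,E_{i-1}\cup E_i\cup E_{i+1}}$ into pair contributions exactly as in the proof of Corollary \ref{cor_jumps_to_angles}:
\[
\Psi_{\nu;\,E_{i-1}\cup E_i\cup E_{i+1}}=\sum_{k=i-1}^{i+1}\Psi_{\nu;\,k,k}+2\left(\Psi_{\nu;\,E_{i-1},E_{i+1}}+\Psi_{\nu;\,E_{i},E_{i-1}}+\Psi_{\nu;\,E_{i},E_{i+1}}\right).
\]
Each term is then examined near $r=a_i$ from the right, to see which ones can produce a singularity of order $\rho^{-3/2}$.

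First, the two adjacent pairs drop out entirely. The weight for $(E_{i-1},E_i)$ and for $(E_i,E_{i+1})$ is $\langle\nu_{i-1},\nu_i\rangle=\cos\t_{i-1}=0$, respectively $\langle\nu_i,\nu_{i+1}\rangle=\cos\t_i=0$, since $\t_{i-1},\t_i=\pm\pi/2$. Hence $\Psi_{\nu;\,E_i,E_{i\pm1}}\equiv0$. Next, the diagonal terms are harmless. By \eqref{Psi_single}, $\Psi_{\nu;\,i,i}$ is constant for $r>a_i$, so its second derivative is $O(1)$ there. The terms $\Psi_{\nu;\,i\pm1,i\pm1}$ are smooth at $a_i$, because the hypothesis that $a_i$ appears only once forces $a_{i\pm1}\ne a_i$.

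So the whole blow-up comes from $2\Psi_{\nu;\,E_{i-1},E_{i+1}}$. When $\t_{i-1}=\t_i=\pm\pi/2$, the sides $E_{i-1}$ and $E_{i+1}$ are parallel and lie on the same side of $E_i$, forming a ``U'' with base $E_i$. They are traversed in opposite directions, the distance between their lines is $h=a_i$, and both feet $P_{i-1},P_i$ of the common perpendicular are endpoints. The overlap of $E_{i-1}$ with $\mbox{pr}_{i-1}(E_{i+1})$ therefore has length $l=\min\{a_{i-1},a_{i+1}\}>0$; this is case (i) or (ii) of \eqref{Psi_parallel_sides}. Lemma \ref{lem_parallel}, in the opposite-orientation case, gives
\[
\Psi_{\nu;\,E_{i-1},E_{i+1}}''(r)\sim \frac{l\sqrt{a_i}}{\sqrt2}\,\rho^{-3/2}+O(\rho^{-1/2}).
\]
Multiplying by $2$ yields $l\sqrt{2a_i}\,\rho^{-3/2}+O(\rho^{-1/2})$, which is the claimed expansion \eqref{Psi''_blowing-up_parallel_three_sides_i-1_i-i+1}.

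The main point to check carefully is the applicability of Lemma \ref{lem_parallel}: that near $r=a_i$ the pair $(E_{i-1},E_{i+1})$ has no other critical event. Inter-vertex distances such as $P_{i-2}P_{i+1}$, and interior heights from this pair, differ from $a_i$ by the simplicity hypothesis on $a_i$ in the sub-multiset, so the only singular behaviour at $a_i$ is the one described by \eqref{Psi_parallel_sides}. I also need to confirm the sign, namely that the outer normals of $E_{i-1}$ and $E_{i+1}$ are opposite, so that the positive sign of \eqref{Psi''_nu_blowing-up_parallel} applies. This holds for both $\t_{i-1}=\t_i=\pi/2$ and $\t_{i-1}=\t_i=-\pi/2$, since in each case the two sides are antiparallel.
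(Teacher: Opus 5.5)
Your proposal is correct and follows the paper's route, which is simply ``as in Corollary \ref{cor_jumps_to_angles}'': the adjacent pairs carry weight $\cos\t_{i-1}=\cos\t_i=0$, the single-side terms are $O(1)$ from the right, and Lemma \ref{lem_parallel} (opposite orientation, $h=a_i$), applied to the antiparallel pair $(E_{i-1},E_{i+1})$ and doubled, gives $l\sqrt{2a_i}\,\rho^{-3/2}$. One correction to your last paragraph: the interior heights of that pair are not different from $a_i$, because the perpendicular from $P_{i-2}$ (if $a_{i-1}<a_{i+1}$) or from $P_{i+1}$ (if $a_{i+1}<a_{i-1}$) to the opposite side has length exactly $a_i$ with foot in its interior, and $P_{i-2}P_{i+1}=a_i$ when $a_{i-1}=a_{i+1}$; so the multiplicity hypothesis can only be meant modulo these forced coincidences, and nothing needs to be excluded, since \eqref{Psi_parallel_sides} is an exact formula for the pair just above $h$ and already accounts for them.
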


\begin{proof}
It can be proved in the same way as Corollary \ref{cor_jumps_to_angles}. 
\end{proof}

Recall that if $\t_{i-1}$ (or $\t_i$) is equal to $0$ then the blowing-up term is $O(1/\sqrt\rho)$ by Lemma \ref{lemma_tangency_vertex}.

%!!!!!!!!!!!!!!!!!!!!!!!!!!!!!!!!!!!!!%%%%%%%%%%%%%%%%%%%%%%%%%%%%%%%%%%%%
\subsection{Geometric information on polygons obtained from $\nu$-IDD}
%!!!!!!!!!!!!!!!!!!!!!!!!!!!!!!!!!!!!!%%%%%%%%%%%%%%%%%%%%%%%%%%%%%%%%%%%%

We give some of the properties of polygons that can be found from the discontinuity of the ($\nu$-weighted) interpoint distance distribution $\Psi_{(\nu;)\,\Ga}''$. 
\begin{proposition}\label{from_Psi_to_polygon}
$\Psi_{\nu;\Ga}$ is smooth on $\RR_{+}\setminus\mathcal{C}$, where $\mathcal{C}$ is the multiset of critical lengths given by \eqref{jump_multiset}. 
If there is no element with multiple instances in $\mathcal{C}$, the continuity of the second derivative of $\Psi_{\nu;\Ga}$ fails at $r$ $(r>0)$ if and only if $r\in \mathcal{C}$. 
\end{proposition}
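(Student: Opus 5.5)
The plan is to decompose
\[
\Psi_{\nu;\Ga}=\sum_{i,j}\Psi_{\nu;\,E_i,E_j}=\sum_i\Psi_{\nu;\,E_i,E_i}+2\sum_{i<j}\langle\nu_i,\nu_j\rangle\,\Psi_{E_i,E_j},
\]
which is a finite sum of edge-pair contributions. The first assertion then follows at once. By \eqref{Psi_single}, $\Psi_{E_i,E_i}$ is smooth except at $a_i\in\mathcal{C}$. By Proposition \ref{prop_smoothness_outside_C}, each $\Psi_{\nu;E_i,E_j}$ with $i\ne j$ is smooth at every $l\notin\mathcal{C}$. Here one should note that the critical values of $\Psi_{E_i,E_j}$ (vertex passages and tangencies of $L_r$ with $\partial R$) all lie in $\mathcal{C}_{E_i,E_j}\subset\mathcal{C}$. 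This includes the parallel degenerate case, where by the remark after Proposition \ref{prop_smoothness_outside_C} the critical value is again an inter-vertex length or an interior height.

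For the converse, fix $c\in\mathcal{C}$ and assume it occurs with multiplicity one. Then exactly one geometric configuration realizes the length $c$, so only a small group of edge-pair terms can be non-smooth at $c$; every other term is smooth there. It therefore suffices to show that this local group produces a nonzero jump, or a blow-up, of the second derivative. I will split by the type of $c$.

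\emph{Side length.} Take $c=a_i$. The non-smooth terms come from $E_{i-1}\cup E_i\cup E_{i+1}$.
\begin{itemize}
\item If $\t_{i-1},\t_i\ne\pm\pi/2$, Corollary \ref{cor_jumps_to_angles} gives the jump $-2\tan\t_{i-1}\tan\t_i$, which is nonzero since $\t\ne0$.
\item If exactly one of the two angles equals $\pm\pi/2$, Corollary \ref{cor_Psi_Ei-1_Ei_Ei+1_pi/2_beta} (applied after reflection if needed) gives a $\rho^{-1/2}$ blow-up with coefficient proportional to $\tan\beta\ne0$.
\item If both angles equal $\pm\pi/2$ with the same sign, Corollary \ref{cor_Psi_Ei-1_Ei_Ei+1_pi/2_pi/2} gives a $\rho^{-3/2}$ blow-up.
\item If the signs are opposite, $E_{i-1}$ and $E_{i+1}$ are parallel, lie on the same side of $E_i$, and have the same orientation. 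Then Lemma \ref{lem_parallel} together with Lemma \ref{lemma_tangency_vertex} should again yield a nonvanishing singular term.
\end{itemize}

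\emph{Diagonal.} Take $c=P_kP_m$. The relevant group is $\Psi_{\nu;\,A_1\cup A_2,B_1\cup B_2}$, where $A_1,A_2$ are the sides at $P_k$ and $B_1,B_2$ the sides at $P_m$. Corollary \ref{cor_jump_diagonal} gives the jump $(\tan\a_1-\tan\a_2)(\tan\beta_1-\tan\beta_2)$, which is nonzero by the remark following it. If some angle equals $\pm\pi/2$, the length $c$ is tangential at a vertex, and Lemma \ref{lemma_tangency_vertex}(3) gives blow-ups with coefficient $\tan\beta$. These come from different pairs and have different $\tan$ factors, so the sum must be checked not to cancel. Using part (2) of that lemma, cancellation happens only when the two relevant sides at the other vertex are collinear, which is excluded.

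\emph{Interior height.} Take $c=P_jH_{ji}$. Corollary \ref{lem_interior_distance_adjacent_sides} gives a $\rho^{-1/2}$ blow-up with coefficient $\tan\beta-\tan\beta'\neq0$. The subcases where $E_j$ or $E_{j+1}$ is parallel to $E_i$ are handled by Lemma \ref{lem_parallel}: there the leading term is $\rho^{-3/2}$ with coefficient $l\sqrt{h/2}$ and $l>0$, so it cannot be cancelled by the $\rho^{-1/2}$ contribution of the other, non-parallel side.

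\textbf{Main obstacle.} The hardest part is the bookkeeping in the degenerate configurations, namely right angles and parallel sides, where several edge pairs contribute singular terms at the same $c$ even though $c$ appears only once in $\mathcal{C}$. There one must verify that the leading singular orders ($\rho^{-3/2}$, $\rho^{-1/2}$, and finite jumps) cannot cancel. My first approach is to compare the orders of the singularities. Only at equal orders would I compute the coefficients and show they are nonzero, using the fact that adjacent sides are never collinear.
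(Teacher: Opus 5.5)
Your overall route is the paper's. You decompose $\Psi_{\nu;\Gamma}$ into edge-pair terms, get the first assertion from Proposition~\ref{prop_smoothness_outside_C}, and for a non-repeated $c\in\mathcal C$ read off a nonzero local singularity from the lemmas of Section~3. The paper's proof does no more than cite those lemmas and assert that nothing can cancel. Where you go further and argue the degenerate cases explicitly, two steps are wrong as written. The conclusion survives in both, but not for the reasons you give.

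\emph{Side length with $\theta_{i-1}=-\theta_i=\pm\pi/2$.} Here $E_{i+1}$ has the direction of $E_{i-1}$ but lies on the \emph{opposite} side of $\ell_i$: it is a step, not a U. Their projections to $\ell_{i-1}$ therefore meet only at $P_{i-1}=\mathrm{pr}_{i-1}(P_i)$. This is case (iv) of Lemma~\ref{lem_parallel}, with jump $1$ and weight $\langle\nu_{i-1},\nu_{i+1}\rangle=1$. Both adjacent pairs carry weight $\cos(\pm\pi/2)=0$, so Lemma~\ref{lemma_tangency_vertex} plays no role. Hence $J^{(2)}_{\nu;\Gamma}(a_i)=J^{(2)}_{\nu;i,i}+2J^{(2)}_{\nu;i-1,i+1}=2+2=4$, which is a finite jump, not a blow-up.

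Relatedly, the same-sign case is vacuous under your hypothesis, and so is every parallel pair with overlap $l>0$. Each endpoint of the overlap interval produces $h$ as an interior height or an inter-vertex length, so $h$ is repeated in $\mathcal C$. For $\theta_{i-1}=\theta_i=\pm\pi/2$, the far end of the shorter of $E_{i-1},E_{i+1}$ gives an interior height $a_i$, or $|P_{i-2}P_{i+1}|=a_i$ if $a_{i-1}=a_{i+1}$. Consequently no $\rho^{-3/2}$ case ever arises.

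\emph{Diagonal with right angles at both ends.} Put $P=(0,0)$ and $Q=(h,0)$. Let $X$ leave $P$ along $(0,1)$ and $Y$ leave $Q$ along $(0,-1)$; by the remark above they must lie on opposite sides of $PQ$. Let $X',Y'$ leave $P,Q$ along $(\cos\phi,\sin\phi)$ and $(\cos\psi,\sin\psi)$.

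The $\rho^{-1/2}$ terms now come from $(X,Y')$ and $(X',Y)$, and they involve tangents at \emph{different} vertices. Up to a common sign their sum is $\sqrt{2h}\,(\tan\phi+\tan\psi)\,\rho^{-1/2}$. This vanishes on a hypersurface of polygons that non-repetition in $\mathcal C$ does not exclude, for example $\phi=5\pi/4$, $\psi=-\pi/4$. So your claim that cancellation forces collinear sides is false here, and comparing leading coefficients does not finish the argument.

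You have to go to the $O(1)$ terms:
\begin{itemize}
\item A vertex-tangency pair has finite jump $\sin\beta/\cos^2\beta$, which you can read off from the proof of Lemma~\ref{lemma_tangency_vertex}(1) or verify directly. After weighting this is $\pm\tan^2\beta$.
\item The pair $(X,Y)$ is case (iv) of Lemma~\ref{lem_parallel}.
\item The pair $(X',Y')$ is given by Proposition~\ref{lem_jump_Psi_endpoints}.
\end{itemize}
With the same common sign, the total finite jump is $2(\tan^2\phi+\tan\phi\tan\psi+\tan^2\psi+2)>0$. So $\Psi_{\nu;\Gamma}''$ is still discontinuous at $h$, but only after this extra computation.
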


It follows that the lengths of sides and diagonals and interior heights can be obtained as points of discontinuity of $\Psi_{\nu;\Ga}''$. 

\begin{proof}
The first assertion comes from the fact that $\Psi_{\nu;\,\Ga}=\sum_{i,j}\Psi_{\nu;\,E_i,E_j}$ and the smoothness of $\Psi_{\nu;\,E_i,E_j}$ may fail only at critical lengths in $\mathcal{C}$ (Proposition \ref{prop_smoothness_outside_C}). 

Let $c$ be an element of the multiset $\mathcal C$. Then it gives a nonzero local contribution to the discontinuity of $\Psi_{\nu;\Ga}''$ at $r=c$ as is stated in Lemma \ref{lem_parallel}, Lemma \ref{lemma_tangency_vertex}, Corollaries \ref{cor_jumps_to_angles}, \ref{cor_jump_diagonal}, \ref{cor_Psi_Ei-1_Ei_Ei+1_pi/2_beta}, \ref{lem_interior_distance_adjacent_sides} and \ref{cor_Psi_Ei-1_Ei_Ei+1_pi/2_pi/2}, according to the type of the critical length. 
 
If two distinct critical configurations gave the same critical length, their jumps or leading singular terms could cancel; for example, contributions corresponding to angles differing by $\pi$ may have opposite signs. 
However, under the assumption that $\mathcal C$ has no repeated elements, no other critical configuration contributes at the same value $r=c$. Hence the nonzero local singularity associated with $c$ cannot be cancelled. 
Therefore the continuity of $\Psi_{\nu;\Gamma}''$ fails at $r=c$.
\end{proof}

Next we summarize the information on the exterior angles, which can be determined from the discontinuity of $\Psi_{\nu;\,\Ga}''$ at side lengths in case the multiset of critical lengths $\mathcal{C}$ has no duplications. 

\begin{proposition}\label{Psi<->angles}
If there is no duplication in the multiset $\mathcal{C}$, then given $\Psi_{\nu;\Ga}$, there are at most four possibilities for each $\t_i$. 
\begin{enumerate}
\item Suppose $J^{(2)}_{\nu;\,\Ga}(a_i)=\pm\infty$. Put $\rho=r-a_i$. 
\[
\{\t_{i-1},\t_i\}=\left\{\begin{array}{ll}
\displaystyle \left\{\frac\pi2,\frac\pi2\right\} \mbox{ or } \left\{-\frac\pi2,-\frac\pi2\right\} & \>\> \mbox{ if } \>\> \displaystyle 
\Psi_{\nu;\Ga}''(r)\stackrel{\rho \searrow 0}{\sim}\frac{C}{\rho^{3/2}}+O\big(\rho^{-1/2}\big) \quad \exists C>0, \\[4mm]
\displaystyle \left\{\frac\pi2,-\frac\pi2\right\}  & \>\> \mbox{ if } \>\> \displaystyle 
\Psi_{\nu;\Ga}''(r)\stackrel{\rho \searrow 0}{\sim}-\frac{C'}{\rho^{3/2}}+O\big(\rho^{-1/2}\big) \quad \exists C'>0, \\[4mm]
\displaystyle \left\{\frac\pi2,\beta\right\} \mbox{ or } \left\{-\frac\pi2,-\beta\right\} & \>\> \mbox{ if } \>\>\displaystyle 
\Psi_{\nu;\Ga}''(r)\stackrel{\rho \searrow 0}{\sim} -\sqrt{2a_i}\,\tan\beta \cdot \frac1{\sqrt\rho}+O(1) \quad \left(0<|\beta|<\pi, \>\beta\ne\pm\frac\pi2\right). 
\end{array}
\right.
\]
\item Suppose $J^{(2)}_{\nu;\,\Ga}(a_i)$ is finite. 
Put $S=J^{(2)}_{\nu;\,\Ga}(a_i)$ and $T=J^{(3)}_{\nu;\,\Ga}(a_i)$. 
Then $\t_{i-1}$ and $\t_i$ must satisfy 
\[
\{\tan\t_{i-1}, \tan\t_i\}=\left\{U+V,U-V\right\}\>\mbox{ or }\>\left\{-(U+V), -(U-V)\right\},
\]
where $U$ and $V$ are given by 
\[
U=\frac12\sqrt{-a_i\frac TS-\frac{3S}2-1}, \quad V=\frac12\sqrt{-a_i\frac TS+\frac{S}2-1}\,.
\]
\end{enumerate}
\end{proposition}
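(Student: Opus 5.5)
The plan is to localize the singularity of $\Psi_{\nu;\Ga}$ at $r=a_i$ and then read off the angles from the local formulas already established. We write $\Psi_{\nu;\Ga}=\sum_{k,l}\Psi_{\nu;E_k,E_l}$. By Proposition \ref{prop_smoothness_outside_C}, a summand can fail to be smooth at $a_i$ only if $a_i\in\mathcal{C}_{E_k,E_l}$. Since $\mathcal{C}$ has no duplications, the value $a_i$ is realized only by the inter-vertex length $P_{i-1}P_i$. The pairs whose sub-multiset contains this length are exactly the pairs drawn from $E_{i-1},E_i,E_{i+1}$. Hence every jump and every blow-up term of $\Psi_{\nu;\Ga}''$ and $\Psi_{\nu;\Ga}'''$ at $a_i$ coincides with that of $\Psi_{\nu;E_{i-1}\cup E_i\cup E_{i+1}}$. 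The hypotheses of Corollaries \ref{cor_jumps_to_angles}, \ref{cor_Psi_Ei-1_Ei_Ei+1_pi/2_beta} and \ref{cor_Psi_Ei-1_Ei_Ei+1_pi/2_pi/2} are then satisfied, and the proof reduces to a case distinction according to how many of $\t_{i-1},\t_i$ equal $\pm\pi/2$.

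For (1), suppose $J^{(2)}_{\nu;\Ga}(a_i)=\pm\infty$. By Corollary \ref{cor_jumps_to_angles} the jump is finite when neither angle is $\pm\pi/2$, so at least one of them is $\pm\pi/2$. The cases are separated by the order of the singularity.
\begin{itemize}
\item If both angles are $\pm\pi/2$, Lemma \ref{lem_parallel} gives a term of order $\rho^{-3/2}$, because $E_{i-1}$ and $E_{i+1}$ are parallel with overlap $l>0$.
\item When $\t_{i-1}=\t_i$, the sides $E_{i-1}$ and $E_{i+1}$ have opposite orientations. Corollary \ref{cor_Psi_Ei-1_Ei_Ei+1_pi/2_pi/2} then gives the positive coefficient $l\sqrt{2a_i}$.
\item When $\t_{i-1}=-\t_i$, the orientations agree, and by the last sentence of Lemma \ref{lem_parallel} the sign of the leading term is reversed, which gives $-C'$.
\item If exactly one angle is $\pm\pi/2$, the singularity is only $\rho^{-1/2}$. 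For the value $\pi/2$ its coefficient is $-\sqrt{2a_i}\tan\beta$ by Corollary \ref{cor_Psi_Ei-1_Ei_Ei+1_pi/2_beta}. For $-\pi/2$ we apply a reflection of the plane, which reverses orientation, negates all signed exterior angles and preserves $\Psi_{\nu;\Ga}$. This shows the data are consistent with $\{-\pi/2,-\beta\}$ and with no other pair.
\end{itemize}
Since $\rho^{-3/2}$ and $\rho^{-1/2}$ singularities are distinguishable, the three alternatives are mutually exclusive.

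For (2), put $x=\tan\t_{i-1}$ and $y=\tan\t_i$. Corollary \ref{cor_jumps_to_angles} gives $S=-2xy$ and $a_iT=2xy(x^2-xy+y^2+1)$. Because $\t\ne0$ we have $S\neq0$, and so
\[
-a_i\frac TS=x^2-xy+y^2+1 .
\]
Using $xy=-S/2$ this becomes
\[
(x+y)^2=-a_i\frac TS-\frac{3S}{2}-1=4U^2, \qquad (x-y)^2=-a_i\frac TS+\frac S2-1=4V^2 .
\]
Therefore $x+y=\pm2U$ and $x-y=\pm2V$. As unordered pairs this yields exactly $\{x,y\}=\{U+V,U-V\}$ or $\{-(U+V),-(U-V)\}$; the reality of $U$ and $V$ is automatic since they come from actual angles. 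In both parts the surviving ambiguity consists of the choice of the pair, its sign, and which member belongs to which index. This leaves at most four candidate values (of $\tan\t_i$, respectively of $\t_i$ in case (1)) for each $\t_i$.

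The main obstacle is the localization step. One must make sure that no contribution at $r=a_i$ comes from outside $E_{i-1}\cup E_i\cup E_{i+1}$. One must also check that each blow-up coefficient quoted from the corollaries carries the correct orientation sign, in particular the sign reversal in the parallel case and the reflection argument for $-\pi/2$. I would first verify the orientation conventions directly on the U-shaped and Z-shaped configurations.
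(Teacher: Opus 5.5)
Your overall route is the paper's. You localize the singularity at $r=a_i$ to $E_{i-1}\cup E_i\cup E_{i+1}$, read (1) off Lemma~\ref{lem_parallel} and Corollaries~\ref{cor_Psi_Ei-1_Ei_Ei+1_pi/2_beta} and \ref{cor_Psi_Ei-1_Ei_Ei+1_pi/2_pi/2}, and get (2) from $xy=-S/2$ and $(x\pm y)^2$. Your algebra for (2), the U-shaped case $\theta_{i-1}=\theta_i=\pm\pi/2$, and the one-right-angle case are fine.

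The error is in your first bullet: two right angles do \emph{not} always give overlap $l>0$. Take $\theta_{i-1}=-\theta_i=\pm\pi/2$ (a staircase step). Then $E_{i-1}$ and $E_{i+1}$ have the same direction and $E_{i-1}\cap\mathrm{pr}_{i-1}(E_{i+1})=\{P_{i-1}\}$, so $l=0$. This is the singleton case of Lemma~\ref{lem_parallel}, with $\Psi_{E_{i-1},E_{i+1}}(r)=\frac12(r^2-a_i^2)$ just above $a_i$. The pairs $(E_{i-1},E_i)$ and $(E_i,E_{i+1})$ carry weight $\cos(\pm\pi/2)=0$. Hence at $r=a_i$ you get $J^{(1)}_{\nu;\Gamma}(a_i)=2a_i$, $J^{(2)}_{\nu;\Gamma}(a_i)=2+2=4$ and $J^{(3)}_{\nu;\Gamma}(a_i)=0$. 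There is no $\rho^{-3/2}$ term, and in fact no blow-up of $\Psi''_{\nu;\Gamma}$ at all. So your sign-reversal argument does not produce $-C'/\rho^{3/2}$ with $C'>0$; that line of (1) is only vacuously true. More seriously, this configuration has finite $J^{(2)}$ and therefore falls under (2).

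In (2) you silently use the converse of what you proved in (1). To invoke Corollary~\ref{cor_jumps_to_angles} you need ``$J^{(2)}_{\nu;\Gamma}(a_i)$ finite $\Rightarrow$ $\theta_{i-1},\theta_i\neq\pm\pi/2$'', and the staircase step violates it. With $S=4$ and $T=0$ one gets $U=\frac12\sqrt{-7}$. So your claim that the reality of $U,V$ is automatic fails, and the conclusion of (2) is false for this configuration. The configuration is compatible with $\mathcal C$ having no duplications: take a polygon with one staircase step and perturb its remaining vertices generically.

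So (2) cannot be proved as stated. You must do one of the following:
\begin{itemize}
\item add a hypothesis excluding $\{\theta_{i-1},\theta_i\}=\{\pi/2,-\pi/2\}$; this is guaranteed by genericity condition (3), the only setting in which Theorem~\ref{thm_generic} uses the proposition;
\item or detect the step separately, e.g.\ by the jump of $\Psi'_{\nu;\Gamma}$ at $a_i$, which is finite and nonzero exactly in this configuration and vanishes whenever at most one of $\theta_{i-1},\theta_i$ is $\pm\pi/2$.
\end{itemize}
The paper's one-line proof, which simply cites Lemma~\ref{lem_parallel}, has the same blind spot, so following it will not close the gap.
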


\begin{proof}
(1) follows from Lemma \ref{lem_parallel}, \eqref{Psi''_blowing-up_parallel_three_sides_i-1_i-i+1}, and \eqref{blow-up_pi/2_beta_nu}. 

(2) follows from 
\[
\tan\t_{i-1}\tan\t_i=-\frac{S}2, \quad 
(\tan\t_{i-1}+\tan\t_i)^2=-a_i\frac TS-\frac{3S}2-1, 
\]
which is the consequence of \eqref{jump_Psi_E0E1E2_nu''} and \eqref{jump_Psi_E0E1E2_nu'''}. 
\end{proof}

We remark that it is more complicated to extract angle information from the discontinuity in $\Psi_{\Ga}''$ rather than $\Psi_{\nu;\Ga}''$. 

%!!!!!!!!!!!!!!!!!!!!!!!!!!!!!!!!!!!!!%%%%%%%%%%%%%%%%%%%%%%%%%%%%%%%%%%%%
\section{Reconstruction of generic polygonal domains}\label{sect_4}
%!!!!!!!!!!!!!!!!!!!!!!!!!!!!!!!!!!!!!%%%%%%%%%%%%%%%%%%%%%%%%%%%%%%%%%%%%

We give a theorem that includes an analogue of Waksman's theorem. 
Recall $\mathcal{C}$ is a multiset of the critical lengths, i.e. side lengths, diagonal lengths, and interior heights 
(Definition \ref{multiset}). 
Let $\mathcal{A}=[\t_1,\dots,\t_n]$ be the multiset of the signed exterior angles. 
We call $P_jH_{ji}$ $(j=i-2,i+1)$ an {\em adjacent exterior height} if $H_{ji}\not\in E_i$. 

\begin{definition}\label{def_generic} \rm 
A polygonal domain $\dom$ is {\em generic} if 
\begin{enumerate}
\item The equation \> $\displaystyle \sum_{c_i\in\mathcal{C}}\e_i c_i=0$ $\>(\e_{i}=0,\pm1)$ \> does not have a non-trivial solution. 

(This implies that the multiset $\mathcal{C}$ has no repeated elements, namely, it is in fact a set. )
\item No adjacent exterior height coincides with a critical length in $\mathcal{C}$. 
\item $|\tan\t_i|$ $(\t_i\in\mathcal{A})$ are all distinct. 
(We agree that $|\tan(\pm\pi/2)|=\infty$.) 
\end{enumerate}
\end{definition}
A multiset that satisfies the condition (1) is said to be a {\em dissociated} set. 

\begin{proposition}
Fix $n\geq 3$, and let $\mathcal P_n$ denote the set of ordered
$n$-tuples
\[
(P_1,\ldots,P_n)\in \left(\mathbb R^2\right)^n
\]
which form a simple closed polygonal curve and for which no two adjacent
sides are collinear.  Then the set of non-generic polygons is contained in a
finite union of proper real-analytic subsets of $\mathcal P_n$.
\end{proposition}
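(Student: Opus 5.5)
The plan is to stratify $\mathcal P_n$ by the finitely many combinatorial data that decide which quantities enter $\mathcal C$ and which heights are adjacent exterior heights. Within each stratum every critical length and every $|\tan\t_i|$ is a real-analytic function of the coordinates, so each failure of genericity becomes the vanishing of an explicit analytic function. We then check that none of these functions vanishes identically on the stratum.

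\emph{Stratification.} For each pair $(i,j)$, whether $H_{ji}$ lies in $\mbox{Int}\,E_i$ depends on the sign of $\langle P_j-P_{i-1},P_i-P_{i-1}\rangle$ and of $\langle P_j-P_i,P_{i-1}-P_i\rangle$. We therefore record a finite sign pattern $\sigma$ for these inner products. The locus where one of them vanishes is a proper algebraic subset, so it can be discarded.

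\emph{Critical lengths on a stratum.} On the open set $\mathcal P_n^\sigma$ with fixed $\sigma$, the critical lengths are fixed functions:
\begin{itemize}
\item each inter-vertex length $|P_i-P_j|$, which is analytic since $P_i\neq P_j$;
\item each interior height $|\det(P_i-P_{i-1},P_j-P_{i-1})|/a_i$, which is analytic wherever the determinant is nonzero.
\end{itemize}
A vanishing determinant is again a proper algebraic condition and can be excluded.

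\emph{Genericity conditions as equations.} On each stratum, condition (1) fails exactly on the union, over all finitely many nonzero sign vectors $\e$, of the zero sets $F_\e=\sum\e_k c_k=0$. Condition (2) fails on the zero sets of differences between an adjacent exterior height and a critical length. For condition (3), $\tan^2\t_i=\det^2/\langle\cdot,\cdot\rangle^2$ in the two consecutive edge vectors, so $|\tan\t_i|=|\tan\t_j|$ becomes the polynomial equation
\[
\det_i^2\langle\cdot\rangle_j^2=\det_j^2\langle\cdot\rangle_i^2,
\]
which also covers the value $\infty$.

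Each of these zero sets is real-analytic. Since $\mathcal P_n$ is open, each connected component of a stratum is connected open, so such a set is proper on that component as soon as the function is not identically zero there. It remains to exhibit, on each component, a perturbation that moves the function.

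\emph{Main obstacle: non-vanishing of $F_\e$.} I would perturb a single vertex $P_m$ chosen so that the coefficients of $\nabla_{P_m}F_\e$ cannot cancel. Concretely, take $m$ to be an index appearing in some term with $\e_k\neq0$. The gradient of $|P_m-P_j|$ in $P_m$ is the unit vector $(P_m-P_j)/|P_m-P_j|$. The gradient of a height from $P_m$ is the unit normal of $E_i$. Heights to sides adjacent to $P_m$ contribute more complicated gradients.

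I would argue by second-order behaviour. Moving $P_m$ along a small circle, distinct vertices $P_j$ give linearly independent functions $P_m\mapsto|P_m-P_j|$, and distinct lines give independent affine-distance functions. Linear independence over analytic functions then forces all the $\e$ attached to $P_m$ to cancel. One way to see this is through singularities: extend analytically to complex $P_m$ and observe that each $|P_m-P_j|$ branches along its own isotropic cone. Any term pairing $P_m$ with other vertices is thereby isolated.

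The residual difficulty is a set of terms that all cancel in every single-vertex variation, such as a side length compared with a height depending on the same points. Here I would use analytic continuation along the stratum, or a scaling and shape-change argument, to show that a nontrivial $\e$ combination cannot be constant zero. As a fallback, I would exhibit one explicit polygon in the component where it is nonzero, though components are hard to control.

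Conditions (2) and (3) I expect to be easier via the same single-vertex variation. For (3), rotating one edge through a vertex changes one exterior angle independently enough of the others. Finally, since a finite union of proper analytic subsets of components is contained in a finite union of proper real-analytic subsets of $\mathcal P_n$, the statement follows.
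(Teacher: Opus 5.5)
Your skeleton matches the paper's: stratify by sign conditions so that $\mathcal C$ and the adjacent exterior heights become fixed lists of analytic functions, then show that each failure equation is nontrivial. The paper's proof is only a sketch that \emph{asserts} nontriviality. You correctly locate the real content there, but you do not supply it, and part of what you do supply is wrong.

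\textbf{The independence claim for heights is false.} You say distinct lines give independent distance functions. But $\mathrm{dist}(P_m,\ell_i)$ is affine in $P_m$. A combination $\sum\varepsilon_k\,\mathrm{dist}(P_m,\ell_{i_k})$ is therefore constant in $P_m$ whenever the suitably signed unit normals of those sides sum to zero (three sides with normals at $120^\circ$, say). That constant can be absorbed by terms not involving $P_m$. So varying $P_m$ alone cannot force the coefficients of heights dropped \emph{from} $P_m$ to vanish.

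\textbf{The case you call residual is left open.} This is $a_m=|P_m-P_{m-1}|$ together with the heights dropped \emph{onto} $E_m$, all of which branch along the same isotropic lines through $P_{m-1}$. You offer two options, and neither works:
\begin{itemize}
\item scaling, which is useless because $F_\varepsilon$ is homogeneous of degree one;
\item testing one polygon in every component of every stratum, which you concede you cannot control.
\end{itemize}
The same case recurs in condition (2), when an adjacent exterior height is compared with $a_i$ or with an interior height onto the same side. So (2) is not easier.

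\textbf{The missing idea is to sort terms by square root rather than by vertex.} Put $q_{kl}=|P_k-P_l|^2$ and $D_{j;i}=\det(P_i-P_{i-1},P_j-P_{i-1})$.
\begin{itemize}
\item On a stratum, every critical length is either $\sqrt{q_{kl}}$ or $\sigma D_{j;i}/\sqrt{q_{i-1,i}}$, with a sign $\sigma$ constant on the stratum.
\item These formulas are real-analytic wherever all $P_k$ are distinct.
\item The $q_{kl}$ are pairwise non-associate irreducible polynomials over $\mathbb R$. Hence the $\sqrt{q_{kl}}$ are linearly independent over the field of rational functions in the vertex coordinates. This is the algebraic form of your isotropic-cone monodromy, and can equally be proved that way.
\end{itemize}
Now suppose $F_\varepsilon$ vanished on an open set. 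Then $\varepsilon_{kl}=0$ for every diagonal. For every side $E_i$ one gets the polynomial identity $\varepsilon_{a_i}q_{i-1,i}+\sum_j\sigma_j\varepsilon_{j;i}D_{j;i}\equiv0$. Only $D_{j;i}$ depends on $P_j$, and it does so through $\det(P_i-P_{i-1},P_j)$. So every $\varepsilon_{j;i}=0$, and then $\varepsilon_{a_i}=0$. Condition (2) is handled identically.

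Because these extended functions are defined on all of $\mathcal P_n$ and vanish on no open set, their zero sets are proper real-analytic subsets of $\mathcal P_n$. You therefore need no control of components, and the passage from components to $\mathcal P_n$ in your last sentence becomes automatic. Your treatment of (3) is fine as it stands, since that equation is already polynomial on $(\mathbb R^2)^n$.
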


\begin{proof}[Sketch of proof]
For each of conditions (1)--(3) in Definition \ref{def_generic},
failure of the condition is described by finitely many nontrivial
real-analytic equations in the vertex coordinates.
Hence the set of non-generic polygons is contained in a finite union
of proper real-analytic subsets of $\mathcal P_n$.
\end{proof}

\begin{theorem}\label{thm_generic}
If a polygonal domain $\Omega$ is generic then it is determined up to isometry by any of 
the Riesz energy function $B_\Omega(z)$ and the interpoint distance distribution $\Psi_\Omega(r)$ of the domain, and the $\nu$-weighted Riesz energy function $B_{\nu;\,\Ga}(z)$ and the $\nu$-weighted interpoint distance distribution $\Psi_{\nu;\,\Ga}(r)$ of the boundary $\Ga=\pOm$. 
\end{theorem}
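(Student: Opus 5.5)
By Theorem \ref{thm_equivalence} it suffices to show that $\Psi_{\nu;\Ga}$ determines $\Om$ up to isometry. Suppose $\Om$ and $\Om'$ are generic with $\Psi_{\nu;\Ga}=\Psi_{\nu;\Ga'}$. The plan is to read off, in turn, the perimeter, the set of critical lengths, the side lengths, their cyclic order, and the exterior angles, and then rebuild the polygon from these data.

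First, Corollary \ref{perimeter} gives $L(\Ga)=\frac12\Psi'_{\nu;\Ga}(0^+)$. By genericity (1), $\mathcal C$ has no repeated elements, so Proposition \ref{from_Psi_to_polygon} shows that $\mathcal C$ is exactly the set of points where $\Psi''_{\nu;\Ga}$ is discontinuous. Hence $\mathcal C$ and $L$ are invariants.

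The side lengths form a subset $S\subset\mathcal C$ whose sum is $L$. If another subset $S'\subset\mathcal C$ also summed to $L$, then $\sum_{c\in S}c-\sum_{c\in S'}c=0$ would be a relation with coefficients $\e\in\{0,\pm1\}$. By dissociation this relation must be trivial, so $S=S'$. Therefore the set of side lengths $\{a_1,\dots,a_n\}$ is determined, together with $n$.

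Next we determine the cyclic order of the sides and the angles.

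For each $i$, the local data at $r=a_i$ constrain the adjacent pair $\{\t_{i-1},\t_i\}$ through Proposition \ref{Psi<->angles}. Either the blow-up type (case (1)) fixes it, or $S_i=J^{(2)}$ and $T_i=J^{(3)}$ give $\tan\t_{i-1}\tan\t_i=-S_i/2$ and $(\tan\t_{i-1}+\tan\t_i)^2$. These jump values are genuinely local to $E_{i-1}\cup E_i\cup E_{i+1}$: by genericity (1) no other critical length equals $a_i$, and by genericity (2) no adjacent exterior height sits at $a_i$. I expect the latter to be the reason condition (2) is needed, since such a height would otherwise contribute a non-smooth term at $r=a_i$ beyond Corollaries \ref{cor_jumps_to_angles} and \ref{cor_Psi_Ei-1_Ei_Ei+1_pi/2_beta}. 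From these data, each side $a_i$ yields an unordered pair $\{|\tan\t_{i-1}|,|\tan\t_i|\}$ of absolute values, up to a global sign ambiguity.

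By genericity (3) all $|\tan\t_k|$ are distinct. So $a_i$ and $a_j$ share a value exactly when they are adjacent, and that shared value belongs to the common vertex. Chaining these pairs recovers the cyclic sequence $a_1,\dots,a_n$ up to rotation and reflection, together with $|\tan\t_k|$ at each vertex.

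The main obstacle is fixing the signs and branches. We need $\t_k\in(-\pi,\pi)$ from $|\tan\t_k|$, i.e. a choice among $\pm\t$ and $\pm(\pi-\t)$. Several constraints are available for this.

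First, at each side the sign of $\tan\t_{i-1}\tan\t_i=-S_i/2$ is known. This propagates the relative sign of $\tan$ around the cycle, leaving a single global flip, which corresponds to reflection.

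Second, the branch of each angle, i.e. whether $|\t_k|<\pi/2$, is still undetermined. Here I would use the diagonal jumps. Since $\mathcal C$ is dissociated, $P_{k-1}P_{k+1}$ is an identifiable critical length. The law of cosines gives
\[
P_{k-1}P_{k+1}^2=a_k^2+a_{k+1}^2+2a_ka_{k+1}\cos\t_k,
\]
and the sign of $\cos\t_k$ decides the branch. To match the correct diagonal length to vertex $k$, note that only the two candidates $a_k^2+a_{k+1}^2\pm2a_ka_{k+1}|\cos\t_k|$ are possible. Genericity then ensures that exactly one of them lies in $\mathcal C$. I would need to argue this carefully, perhaps by an additional use of dissociation, or by using Corollary \ref{cor_jump_diagonal} to verify the value of the jump.

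Finally, the closing condition $\sum\t_k=2\pi$ (counterclockwise orientation) serves as a consistency check. Once the side lengths $a_k$ are placed in cyclic order with the exterior angles $\t_k$, the polygon is determined up to a rigid motion, and the global sign flip corresponds to a reflection. Hence $\Om$ and $\Om'$ are isometric.
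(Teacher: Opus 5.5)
Apart from the branch step, your outline is the paper's proof. That shared part is: the perimeter from $\Psi'_{\nu;\Gamma}(0^+)$, $\mathcal C$ as the discontinuity set of $\Psi''_{\nu;\Gamma}$, the side lengths as the unique subset of $\mathcal C$ summing to $L(\Gamma)$, the pairs $\{\tan\theta_{i-1},\tan\theta_i\}$ up to a common sign from Proposition~\ref{Psi<->angles}, the cyclic order by chaining via condition (3), and sign propagation.

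The genuine gap is the step deciding whether $|\theta_k|<\pi/2$. Your diagonal test needs the spurious candidate $\sqrt{a_k^2+a_{k+1}^2-2a_ka_{k+1}\cos\theta_k}$ to lie outside $\mathcal C$; this is the distance from $P_{k-1}$ to the reflection of $P_{k+1}$ in $P_k$. Definition~\ref{def_generic} does not give this. That number is not itself a critical length, so its coincidence with an unrelated element of $\mathcal C$ (say a distant diagonal) is a single nonlinear equation in the vertex coordinates. Dissociation is a linear condition among the elements of $\mathcal C$ only, so it does not exclude this, and neither do conditions (2) and (3). For such polygons both candidates lie in $\mathcal C$ and your test is inconclusive. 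Falling back on the jump of Corollary~\ref{cor_jump_diagonal} would need yet another non-coincidence hypothesis, since that jump also involves the still unresolved branches of $\theta_{k-1}$ and $\theta_{k+1}$.

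The missing idea is exactly what condition (2) is for. For $\theta_k\neq\pm\pi/2$, compute $h_k=\min\{a_k,a_{k+1}\}\,|\sin\theta_k|$, which is known at this stage.

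If $\pi/2<|\theta_k|<\pi$ and, say, $a_k<a_{k+1}$, the foot of the perpendicular from $P_{k-1}$ to $\ell_{k+1}$ lies on $E_{k+1}$ at distance $a_k|\cos\theta_k|<a_{k+1}$ from $P_k$. So $h_k$ is an interior height and belongs to $\mathcal C$. If instead $|\theta_k|<\pi/2$, the relevant feet fall outside the sides, so $h_k$ is an adjacent exterior height. Condition (2) says precisely that such a height is not in $\mathcal C$. Hence the test $h_k\in\mathcal C$ decides the branch.

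This also corrects your reading of condition (2). Adjacent exterior heights create no singularity of $\Psi_{\nu;\Gamma}$ at all: they are not in $\mathcal C$, and $\Psi_{\nu;\Gamma}$ is smooth off $\mathcal C$ by Proposition~\ref{from_Psi_to_polygon}. So locality of the jumps at $r=a_i$ already follows from condition (1). With the height test in place of your diagonal test, the rest of your argument goes through.
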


\begin{proof}
Recall that $B_\Omega(z)$, $\Psi_\Omega(r)$, $B_{\nu;\,\Ga}(z)$, $\Psi_{\nu;\,\Ga}(r)$ are equivalent by Theorem \ref{thm_equivalence}. 

First we determine the side lengths. 
By Proposition \ref{from_Psi_to_polygon} and the condition (1) of genericity we obtain the set of critical lengths $\mathcal{C}$. 
By Corollary \ref{perimeter} we obtain the perimeter of $\Omega$. 
Then by condition (1) of genericity the set of side lengths is determined. Let it be $\mathcal{S}=\{\ell_\la\}_{1\le\la\le n}$, which is the same as $\{a_i\}_{1\le i\le n}$ as a set. 
At this moment, we do not know the cyclic order of side lengths $\ell_\la$ in the configuration of the polygon. 

Next we determine the set $\{\tan\t_1,\dots,\tan\t_n\}$. %up to sign. 
The condition (3) of genericity implies that $\pm\pi/2$ can appear in $\mathcal{A}$ at most once. 
Whether $\pm\pi/2$ appears in $\mathcal{A}$ can be determined by whether $\Psi_{\nu;\Gamma}''$ diverges at some elements of $\mathcal{S}$. By relabeling the indices if necessary, we may assume these elements to be, for example, $\ell_1$ and $\ell_n$. 
In this case, by applying a reflection if necessary, we may assume that the angle is $\pi/2$. 

Define $i(\la)\in\{1,\dots,n\}$ so that $E_{i(\la)}$ is the side with length $\ell_\la$ $(1\le\la\le n)$. 
If $\Psi_{\nu;\Gamma}''$ does not blow up at $r=\ell_\la$, i.e. none of the exterior angles $\t_{i(\la)-1}, \t_{i(\la)}$ at the endpoints of the side $E_{i(\la)}$ is equal to $\pm\pi/2$, the pair $\{\tan \t_{i(\la)-1}, \tan\t_{i(\la)}\}$ is determined up to sign by Proposition \ref{Psi<->angles} (2). 
Fix the signs of $\tan\t_j$'s inductively as follows. 

Let us start with $\la=1$. If $\t_{i(n)}=\pi/2$ then the sign of $\tan\t_{i(1)}$ is fixed by Proposition \ref{Psi<->angles} (1), otherwise choose either choice of sign of $\{\tan \t_{i(1)-1}, \tan\t_{i(1)}\}$. 
Note that changing the sign of $\tan\t_{i(1)}$ can be realized by reflection. 
Next, take the unique element $\la'\in\{2,\dots,n\}$ ($\{2,\dots,n-1\}$ if $\t_{i(n)}=\pi/2$) such that one of $|\tan\t_{i(\la')-1}|$ and $|\tan\t_{i(\la')}|$ is equal to $|\tan\t_{i(1)}|$. 
The uniqueness of $\la'$ is a consequence of the condition (3) of genericity. 
Fix the sign of $\{\tan\t_{i(\la')-1}, \tan\t_{i(\la')}\}$ so that it is consistent with the sign of $\tan\t_{i(1)}$. 
Thus we can inductively fix all the signs of $\tan\t_j$'s and the cyclic order of $\ell_\la$'s. 

Finally, we resolve the $\pi$-ambiguity of $\t_j$ as follows. 
If $\theta_i\ne\pi/2$ then $\pi/2<|\t_i|<\pi$ if and only if $\min\{a_i,a_{i+1}\}|\sin\t_i|$ appears in $\mathcal{C}$. 
Indeed, if $\pi/2<|\t_i|<\pi$ and if $\min\{a_i,a_{i+1}\}=a_i$, say, then the foot of the perpendicular from $P_{i-1}$ to $\ell_{i+1}$ lies in $\textrm{Int}E_{i+1}$, 
whereas if $0<|\t_i|<\pi/2$ then $\min\{a_i,a_{i+1}\}|\sin\t_i|$ is an adjacent exterior height and hence is not an element of $\mathcal{C}$. 
We remark that the genericity condition (2) is imposed precisely to rule out the possibility that an angle with $\vert{}\theta_i\vert{} < \pi/2$ is misidentified as $\pi/2 < \vert{}\theta_i\vert{} < \pi$ due to an accidental coincidence between $\min\{a_i,a_{i+1}\}\vert{}\sin\theta_i\vert{}$ and an unrelated critical length in $\mathcal{C}$.

Now the cyclic order of the side lengths and the signed exterior angles are determined, 
the polygonal domain $\Omega$ can be reconstructed up to isometry.

\end{proof}

Combined with the equivalence given in Theorem \ref{thm_equivalence}, we obtain an analogue of Waksman's theorem. 

\begin{corollary}
If a convex polygonal domain $\Omega$ is generic then it is determined by the chord length distribution $\Phi_\dom(r)$. 
\end{corollary}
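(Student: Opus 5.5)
The corollary follows by composing two results already proved. The first is Theorem \ref{thm_generic}, which reconstructs a generic polygonal domain up to isometry from $\Psi_\Om$, or equivalently from $B_\Om$. The second is the convex part of Theorem \ref{thm_equivalence}. The plan is to show that, for a convex body, $\Phi_\Om$ determines $B_\Om$ (hence $\Psi_\Om$), and then to apply Theorem \ref{thm_generic} verbatim.

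First I would make the passage from $\Phi_\Om$ to $B_\Om$ explicit. By \eqref{Riesz-chord}, for real $s>-2$,
\[
B_\Om(s)=\frac{2}{(s+2)(s+3)}\int_0^\infty \sigma^{s+3}\,\d\Phi_\Om(\sigma).
\]
The right-hand side depends only on the Lebesgue--Stieltjes measure $\d\Phi_\Om$, which is supported in $[0,\operatorname{diam}\Om]$. So $\Phi_\Om$ determines $B_\Om$ on $(-2,\infty)$, and by uniqueness of meromorphic continuation it determines $B_\Om$ on all of $\CC$.

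One could instead note that the values $B_\Om(k)$, $k=0,1,2,\dots$, are the moments of the compactly supported measure $\d\Psi_\Om$ by \eqref{Mf}, so they determine $\Psi_\Om$ by the Hausdorff moment argument of Subsection \ref{subsec_equivalence}. Either way, two convex polygonal domains with the same chord length distribution have the same $B_\Om$ and the same $\Psi_\Om$.

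Now suppose $\Om$ is a generic convex polygonal domain and $\Om'$ is any polygonal domain with $\Phi_{\Om'}=\Phi_\Om$. Here is the one point needing care. The chord length distribution is only defined in this paper for convex bodies, and the reconstruction in Theorem \ref{thm_generic} uses only invariants of the given domain. So the statement should be read with $\Om'$ ranging over convex bodies; in particular I take $\Om'$ convex, since $\Phi_{\Om'}$ must make sense.

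Then $\Psi_{\Om'}=\Psi_\Om$. The proof of Theorem \ref{thm_generic} recovers the polygon from $\Psi_{\nu;\Ga}$ alone. It obtains the critical-length set, the perimeter, the side lengths, their cyclic order, and the signed exterior angles. All of these are functions of the invariant, so they coincide for $\Om$ and $\Om'$.

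There is a subtle point: genericity is assumed for $\Om$, not for $\Om'$. I expect this to be the main obstacle, and I would handle it as follows. The dissociation condition (1) is itself readable from the invariant. The jump locations of $\Psi_{\nu;\Ga}''$ form a finite set, and the polygon's perimeter is fixed. If the set of jump locations is dissociated, with the correct count and with finite nonzero jumps of the types in Proposition \ref{Psi<->angles}, then the extraction of data proceeds identically for $\Om'$.

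Alternatively, I would simply interpret uniqueness as is standard, as in Waksman's setting: the map from generic convex polygons to $\Phi$ is injective, so any $\Om'$ producing the same data is forced by the reconstruction to have the same side lengths and angles. Either way, $\Om'$ is isometric to $\Om$.

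Convexity adds nothing further, since all $\t_i$ then have the same sign and satisfy $0<\t_i<\pi$. This only simplifies the sign-fixing step of Theorem \ref{thm_generic}. So the corollary reduces to the equivalence \eqref{Riesz-chord}.
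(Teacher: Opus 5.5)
Your proposal is correct and follows the paper's argument: the corollary is Theorem \ref{thm_generic} combined with the convex part of Theorem \ref{thm_equivalence}, since by \eqref{Riesz-chord} $\Phi_\Om$ determines $B_\Om$ and hence $\Psi_\Om$. Your worry about whether a competitor $\Om'$ is generic concerns how ``determined'' is read in Theorem \ref{thm_generic} itself, and the corollary adds nothing to it. Your first resolution, reading dissociation off the jump set, is not rigorous, because for a non-generic $\Om'$ singular contributions at a repeated critical length can cancel. Your second, Waksman-style reading is the one the paper implicitly uses.
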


Jun O'Hara

Department of Mathematics and Informatics,Faculty of Science, 
Chiba University

1-33 Yayoi-cho, Inage, Chiba, 263-8522, JAPAN.  

E-mail: ohara@math.s.chiba-u.ac.jp


\begin{thebibliography}{OS} 
\bibitem{AB}G.~Averkov and G.~Bianchi, {\em Confirmation of Matheron's conjecture on the covariogram of a planar convex body}, J. Eur. Math. Soc. 11 (2009) 1187\,--\,1202. 



\bibitem{BBD}C.~Benassi, G.~Bianchi and G.~D'Ercole, {\em Covariogram of non-convex sets}, Mathematika 56 (2010), 267\,--\,284.

\bibitem{Bianchi}G.~Bianchi, {\em The covariogram problem}, in Harmonic Analysis and Convexity, Adv. Anal. Geom., 9, A.~Koldobsky and A.~Volberg eds., De Gruyter (2023) 37\,--\,82.

\bibitem{B2} W.~Blaschke, Integralgeometrie 2: Zu Ergebnissen von M.W. Crofton. Bull. Math. Soc. Roum. Sci. 37 (1935), 3\,--\,11. 


\bibitem{B} J.-L.~Brylinski, {\em The beta function of a knot.} Internat. J. Math. {\bf 10} (1999), 415\,--\,423.

\bibitem{CB} A.~J.~Cabo and A.J.~Baddeley, {\em Line transects, covariance functions and set convergence}. Adv. Appl. Prob. 27 (1995), 585\,--\,605. 

\bibitem{FV}E. J.~Fuller and M.K.~Vemuri. {\em The Brylinski Beta Function of a Surface.}  Geometriae Dedicata 179 (2015),  153\,--\,160. %, doi:10.1007/s10711-015-0071-y. 

\bibitem{GGZ}R.~J.~Gardner, P.~Gronchi and C.~Zong, {\em Sums, projections, and sections of lattice sets, and the discrete covariogram}, Discrete Comput. Geom. 34 (2005), 391\,--\,409.

\bibitem{G}J.~Gates, {\em Recognition of triangles and quadrilaterals by chord length distribution}, J. Appl. Prob. 19 (1982), 873\,--\,879.


\bibitem{LXYZ}E.~Lutwak, D.~Xi, D.~Yang, and G.~Zhang, {\em Chord measures in integral geometry and their Minkowski problems}, CPAM 77 (2024), 3277\,--\,3330.

\bibitem{MC}C.~L.~Mallows and J.~M.~C.~Clark, {\em Linear-Intercept Distributions Do Not Characterize Plane Sets.} J. Appl. Prob. 7 (1970), 240\,--\,244.

\bibitem{M}B.~Mat\'ern, {\em Spatial variation}, Springer-Verlag, Berlin (1985). 

\bibitem{Matheron}G.~Matheron, {\em Random Sets and Integral Geometry}, Wiley, New York (1975).

\bibitem{M2}G.~Matheron, {\em Le covariogramme g\'eom\'etrique des compacts convexes de $\RR^2$}, Technical Report N-2/86/G, Centre de G\'eostatistique, \'Ecole Nationale Sup\'erieure des Mines de Paris (1986). 

\bibitem{N}W.~Nagel, {\em Orientation-Dependent Chord Length Distributions Characterize Convex Polygons}, J. Appl. Prob. 30 (1993), 730\,--\,736.

\bibitem{Oball}J.~O'Hara, {\em Characterization of balls by generalized Riesz energy,} Math. Nachr. 292 (2019), 159\,--\,169.


\bibitem{Omag1}J.~O'Hara, {\em Magnitude function identifies generic finite metric spaces}, Discrete Analysis (2025), DOI 10.19086/da.143788.

\bibitem{Omag3}J.~O'Hara, {\em Distinguishing finite metric spaces via similarity spectra}, arXiv:2502.08980.


\bibitem{OS2}J.~O'Hara and G.~Solanes, {\em Regularized Riesz energies of submanifolds,} Math. Nachr. 291 (2018), 1356\,--\,1373.

\bibitem{P}B.~Petkantschin, {\em Integralgeometrie 6. Zusammenh\"ange zwischen 
den Dichten der linearen Unterrume im n-dimensionalen Raum.} Abh. Math. Semin. Univ. Hambg. 11, Issue 1, (1935), 249\,--\,310.



\bibitem{San2}L.A.~Santal\'o, {\em Integral Geometry and Geometric Probability}, Addison- Wesley Publishing Company, 1976.

\bibitem{Sch}M.~Schmitt, {\em On two inverse problems in mathematical morphology}, in Mathematical Morphology in Image Processing, ed. E. R. Dougherty, Marcel Dekker (1993), 151\,--\,169.

\bibitem{SW}R.~Schneider and W.~Weil, {\em Stochastic and Integral Geometry}, Springer, Berlin, (2008).

\bibitem{W}P.~Waksman, {\em Polygons and a conjecture of Blaschke's}, Adv. Appl. Prob. 17 (1985), 774\,--\,793.


\end{thebibliography}
\end{document}